\numberwithin{equation}{section}
\begin{document}

	\newtheorem{theorem}{Theorem}[section]
	\newtheorem{problem}[theorem]{Problem}
	\newtheorem{exercise}[theorem]{Exercise}
	\newtheorem{corollary}[theorem]{Corollary}
	\newtheorem{conjecture}[theorem]{Conjecture}
	\newtheorem{claim}[theorem]{Claim}
	\newtheorem{proposition}[theorem]{Proposition}
	\newtheorem{lemma}[theorem]{Lemma}
	\newtheorem{definition}[theorem]{Definition}
	\newtheorem{example}[theorem]{Example}
	\newtheorem{remark}[theorem]{Remark}
	\newtheorem{solution}[theorem]{Solution}
	\newtheorem{case}{Case}[theorem]
	\newtheorem{condition}[theorem]{Condition}
	\newtheorem{assumption}[theorem]{Assumption}
	\newtheorem{note}[theorem]{Note}
	\newtheorem{notes}[theorem]{Notes}	\newtheorem{observation}[theorem]{Observation}

	\newenvironment{proofclaim}[1][Proof of claim]{\begin{proof}[#1]\renewcommand*{\qedsymbol}{\(\blacksquare\)}}{\end{proof}}
	
	\newcommand{\indicate}{\mathbbm{1}}
	
	\setlist{nolistsep}
	
	\def\al#1{}
	\renewcommand{\al}[1]{\footnote{\textcolor{blue}{\textbf{AL-Jun2023 :}#1}}} 
	\def\COMMENT#1{}
	\renewcommand{\COMMENT}[1]{\footnote{ \textcolor{red}{ #1}}}

	\title{Cycle Partitions in Dense Regular Digraphs and Oriented Graphs}
	\author{Allan Lo\footnote{School of Mathematics, University of Birmingham, United Kingdom, Email: s.a.lo@bham.ac.uk. A.~Lo was partially supported by EPSRC, grant no. EP/V002279/1 and EP/V048287/1. There are no additional data beyond that contained within the main manuscript.} \hspace{0.2in} Viresh Patel\footnote{School of Mathematical Sciences, Queen Mary University of London, United Kingdom,  Email: viresh.patel@qmul.ac.uk. V.~Patel  was supported by the Netherlands Organisation for Scientific Research (NWO)
			through the Gravitation Programme Networks (024.002.003).} \hspace{0.2in} Mehmet Akif Yıldız\footnote{Korteweg de Vries Instituut voor Wiskunde, Universiteit van Amsterdam, The Netherlands. Email: m.a.yildiz@uva.nl.  M.A.~Yıldız was supported by a Marie Skłodowska-Curie Action from the EC (COFUND grant no. 945045 ) and by the NWO Gravitation project NETWORKS  (grant no. 024.002.003). }   } \vspace{0.2in}
	
	\maketitle
	\begin{abstract}
		A conjecture of Jackson from 1981 states that every $d$-regular oriented graph on $n$ vertices with $n\leq 4d+1$ is Hamiltonian. We prove this conjecture for sufficiently large $n$. In fact we prove a more general result that for all $\alpha>0$, there exists $n_0=n_0(\alpha)$ such that every $d$-regular digraph on $n\geq n_0$ vertices with $d \ge \alpha n $ can be covered by at most $n/(d+1)$ vertex-disjoint cycles, and moreover that if $G$ is an oriented graph, then at most $n/(2d+1)$ cycles suffice.
		\bigskip
		
		\textbf{Keywords:} cycle cover, Hamilton cycle, regular, oriented graph
		
	\end{abstract}

	\section{Introduction}

	A \textit{Hamilton cycle} in a (directed) graph is a (directed) cycle that visits every vertex. Hamilton cycles are one of the most intensely studied structures in graph theory and there are numerous results that establish (best-possible) conditions guaranteeing their existence.
	The seminal result of Dirac~\cite{Dirac} states that every graph on $n\ge 3$ vertices with minimum degree at least $n/2$ is Hamiltonian. Ghouila-Houri~\cite{GhouilaHouri} showed the corresponding version in directed graphs (\textit{digraph} for short), that is, every digraph on $n\ge 3$ vertices with minimum semi-degree at least $n/2$ (i.e.\ every vertex has in- and outdegree at least $n/2$) is Hamiltonian. These bounds are tight by taking e.g.\ the disjoint union of two cliques (a regular extremal example) or a slightly imbalanced complete bipartite (di)graph (an irregular extremal example). 
Recall that an \textit{oriented graph} is a digraph that can have at most one edge between each pair of vertices (whereas a digraph can have up to two, one in each direction).
	For oriented graphs, a more recent result of Keevash, K{\"u}hn, and Osthus~\cite{OrientedExactMinDegree} established a (tight) minimum semi-degree threshold of $\lceil (3n - 4)/8 \rceil$ for Hamiltonicity. In contrast to graphs and digraphs, there are no \textit{regular} extremal examples for this result.
	Jackson~\cite{JacksonConjecture} conjectured in 1981 that regularity actually reduces the degree threshold significantly for oriented graphs: 
	
	\begin{conjecture}[Jackson~\cite{JacksonConjecture}] \label{conj:Jackson}
		For each $d > 2$, every $d$-regular oriented graph (i.e. every vertex has $d$ in- and outneighbours) on $n\leq 4d+1$ vertices has a Hamilton cycle.
	\end{conjecture}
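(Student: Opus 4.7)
The plan is to prove the conjecture for all sufficiently large $n$ (the only range reachable by our methods) by deducing it from the stronger cycle-cover statement announced in the abstract. When $n \leq 4d+1$, one has $d \geq (n-1)/4$, so $n/(2d+1) \leq 2n/(n+1) < 2$; hence a cover of a $d$-regular oriented graph by at most $n/(2d+1)$ vertex-disjoint cycles must be a single Hamilton cycle. My target therefore becomes showing that for every $\alpha > 0$ there exists $n_0(\alpha)$ such that every $d$-regular oriented graph on $n \geq n_0$ vertices with $d \geq \alpha n$ admits such a cycle cover; this then yields Jackson's conjecture for $n$ large.

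The first main step is to apply the digraph Regularity Lemma to $G$, obtaining an $\varepsilon$-regular partition $V_0, V_1, \ldots, V_k$ with $|V_0|$ small and equal-sized clusters, and to form the reduced digraph $R$ on $\{V_1, \ldots, V_k\}$ by keeping those ordered pairs that are $\varepsilon$-regular with density above a fixed threshold. Since $G$ is $d$-regular with $d \geq \alpha n$, $R$ is essentially $\alpha k$-regular; and if $G$ is oriented, each pair of clusters contributes almost all of its edges in a single direction, so $R$ can be treated as near-oriented. The task is reduced to producing a cycle cover of $R$ of size at most $\lfloor k/(2\alpha k + 1) \rfloor$ in the oriented case (resp.\ $\lfloor k/(\alpha k + 1) \rfloor$ in the digraph case).

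Next, I would partition $R$ into its ``robust'' strongly connected pieces. In the oriented case, each such piece is dense and near-regular, so by a reduced-graph analogue of the Keevash--K\"uhn--Osthus theorem, or by a robust-expander argument in the spirit of K\"uhn--Osthus, it contains a Hamilton cycle; counting pieces against cluster sizes gives the desired bound. Each Hamilton cycle in $R$ is then lifted to a long cycle in $G$ via iterated use of the regularity of pairs along the cycle, or by a directed Blow-up Lemma applied to a suitable power, yielding a collection of long vertex-disjoint cycles in $G$ covering $V(G)\setminus V_0$ minus a small leftover.

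The final step is absorption: before the regularity step, set aside an absorbing structure --- either a single absorbing path or a distributed family of gadgets --- with the property that every vertex of $G$ has polynomially many absorbing options, and incorporate it into one of the lifted cycles. The exceptional vertices in $V_0$ and any leftovers are then absorbed without creating new cycles. The central obstacle is keeping the cycle count exactly on target in the oriented regime, where $n/(2d+1)$ is tight: one must ensure that the reduced-digraph decomposition faithfully reflects the component structure of $G$ and that absorption always shrinks (never splits) a cycle. A secondary difficulty is that regularity loses control on subsets of size $o(n)$, which is why the absorbing structure must be built robustly at the outset rather than bolted on at the end; a subtler point is that, for oriented graphs, choosing the orientation on each pair in $R$ consistently enough to admit a Hamilton cycle in $R$ requires extra care when two clusters have roughly balanced traffic in both directions.
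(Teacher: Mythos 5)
Your reduction from Jackson's conjecture to the cycle-cover theorem is exactly what the paper does, and is correct: when $n \leq 4d+1$ one has $n/(2d+1) < 2$, so the cycle-cover bound forces a single Hamilton cycle. However, your proposed route to the cycle-cover theorem itself diverges substantially from the paper, and as written it has a real gap that the paper's approach is specifically designed to overcome.

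The paper does not use the Szemer\'edi Regularity Lemma or absorption anywhere. Instead, it applies a structural theorem of K\"uhn, Lo, Osthus and Staden (Theorem~\ref{thm:structure}, via its digraph version Theorem~\ref{thm:digraphstructure}) which partitions $V(G)$ into at most $k \leq n/(d - 2\rho n)$ parts, each inducing a bipartite robust expander with precise, non-regularity-flavoured degree control. The exact integer bound is then obtained by (i) making the partition \emph{balanced} via contracting a small path system found by a max-flow/min-cut argument (Section~\ref{sec:pf_balancedpath}), so that each connected component of the auxiliary graph $S(\mathcal{P})$ yields one cycle by Lemma~\ref{lma:longcycle2}; and (ii) an extremal-case lemma (Lemma~\ref{lma:regdigraph}, plus Proposition~\ref{prop:orientedcomponent} for the oriented case) handling the critical situation where $G$ is close to a disjoint union of $k$ tournaments of size roughly $2d+1$ but $n < (2d+1)k$, which forces the existence of a non-trivial balanced path system merging two components.

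The gap in your proposal is precisely the one you flag but do not resolve. The regularity lemma introduces $\varepsilon n$-sized errors (exceptional set, imperfect cluster regularity, near-regularity of the reduced digraph) which are fatal when the target is the exact integer $\lfloor n/(2d+1) \rfloor$: the near-extremal regime requires distinguishing whether $G$ is literally a disjoint union of regular tournaments (impossible since $n < (2d+1)k$) from the case where it has a few crossing edges that must be exploited to merge two would-be cycles into one. Absorption is the wrong tool for this --- it swallows leftover vertices into existing cycles, but never reduces the \emph{number} of cycles, which is the quantity that must be controlled to the exact integer. Indeed, the regularity-plus-absorption scheme you sketch is essentially the right tool for the \emph{approximate} version of Jackson's conjecture (proved by the same authors in \cite{LPY}), not the exact one; the exact one is precisely where the paper's structural and extremal machinery does the work your plan leaves open.
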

	
	The disjoint union of two regular tournaments shows that Jackson's conjecture is best possible. This example works for $n\equiv 2\pmod{4}$ (since regular tournaments require an odd number of vertices), but similar examples exist for $n\equiv 0,1,3\pmod{4}$ (see Section~\ref{sec:examples+proofs}). 
	
	We note that an approximate version of Jackson's conjecture was recently verified by the current authors in~\cite{LPY}, that is, for every $\varepsilon>0$, there exists $n_0(\varepsilon)$ such that every $d$-regular oriented graph on $n\ge n_0(\varepsilon)$ vertices with $d\ge (1/4+\varepsilon)n$ is Hamiltonian. Here, we verify the exact version for large $n$.
	
	\begin{theorem}\label{thm:Jackson}
		There exists an integer $n_0$ such that every $d$-regular oriented graph on $n\ge n_0$ vertices with $n\leq 4d+1$ has a Hamilton cycle. 
	\end{theorem}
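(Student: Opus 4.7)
The plan is to deduce Theorem~\ref{thm:Jackson} from the stronger cycle cover result stated in the abstract: every sufficiently large $d$-regular oriented graph with $d\ge\alpha n$ admits a vertex-disjoint cycle cover of size at most $n/(2d+1)$. Under the hypothesis $n\le 4d+1$ one has $n/(2d+1) \le (4d+1)/(2d+1) = 2 - 1/(2d+1) < 2$, so the cover consists of a single cycle, i.e.\ a Hamilton cycle. Hence Theorem~\ref{thm:Jackson} reduces to establishing the oriented-graph cycle cover result. Moreover, by the approximate Hamiltonicity theorem of~\cite{LPY}, Hamilton cycles already exist for $d\ge(1/4+\varepsilon)n$, so the real work lies in the narrow extremal range $(n-1)/4\le d\le(1/4+\varepsilon)n$, where $G$ can be close to the extremal configuration of two disjoint regular tournaments.

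I would run a stability-plus-extremal dichotomy. Call $G$ \emph{$\eta$-extremal} if there is a partition $V(G)=A\cup B$ with $|A|,|B|\in[(1/2-\eta)n,(1/2+\eta)n]$ and at most $\eta n^2$ edges across the bipartition; intuitively, $G$ then resembles a disjoint union of two near-regular tournaments. In the \emph{non-extremal case}, $G$ enjoys robust expansion between any two subsets of size $\Omega(n)$, and one can adapt the proof of~\cite{LPY} (absorbing method, regularity lemma, blow-up lemma) to produce not merely an almost-spanning cycle but a genuine Hamilton cycle, using the slack from non-extremality to absorb any $o(n)$ leftover vertices.

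In the \emph{extremal case}, $G$ is essentially two dense near-regular tournaments $T_A=G[A]$ and $T_B=G[B]$ joined by very few bipartite edges, with $o(n)$ ``bad'' vertices whose neighborhoods straddle the partition. Since each part on $|A|$ or $|B|$ vertices would need at least $2d+1$ vertices to host a $d$-regular oriented subgraph, and $n\le 4d+1$ forbids this for both parts simultaneously, there must exist crossing edges in both directions. Picking crossings $a_1b_1$ and $b_2a_2$, one produces a Hamilton cycle by concatenating a Hamilton path from $b_1$ to $b_2$ in $T_B$ with a Hamilton path from $a_2$ to $a_1$ in $T_A$, invoking Hamilton-connectedness of dense near-regular tournaments; bad vertices are first rerouted into the side containing most of their in- and out-neighbourhood.

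The main obstacle is the extremal case. The graph there is an adversarial near-union of two tournaments, and the argument must simultaneously (i) exploit the extremely scarce bipartite edges to stitch the two halves, (ii) route Hamilton paths with prescribed endpoints inside slightly perturbed tournaments, and (iii) correctly absorb the few vertices whose incidence is split across the parts. Calibrating $\eta$ so that the non-extremal robust-expansion argument and the extremal structural argument jointly cover the entire range of $d$ is the usual subtlety of stability proofs, and the fine structural analysis in the extremal regime is where the bulk of the technical work will lie.
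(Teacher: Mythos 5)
Your reduction of Theorem~\ref{thm:Jackson} to the oriented cycle-cover bound --- $n/(2d+1) < 2$ when $n \leq 4d+1$, so one cycle suffices --- is exactly the paper's derivation. The route you then propose for the underlying result diverges substantially, though. The paper proves the full Theorem~\ref{thm:main} (cycle covers for all $\alpha > 0$, for both digraphs and oriented graphs) via a structural decomposition into bipartite robust expanders (Theorem~\ref{thm:digraphstructure}, derived from K\"uhn--Lo--Osthus--Staden), then makes that partition balanced by contracting a $\mathcal{P}$-balanced path system obtained from a flow argument (Lemma~\ref{lem:balancedpath}) together with a separate extremal lemma (Lemma~\ref{lma:regdigraph}), and finally covers each connected component of an auxiliary graph $S(\mathcal{P})$ with a single long cycle (Lemma~\ref{lma:longcycle2}). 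Your stability-plus-extremal dichotomy is tailored specifically to $d \approx n/4$ with a two-block structure; it is a priori a reasonable strategy for Jackson's conjecture alone, but it would not yield Theorem~\ref{thm:main}, and in particular it would not handle the digraph case or a number of parts $k$ larger than~$2$.

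There is also a concrete gap in the extremal branch as sketched. You choose crossing edges $a_1b_1 \in E(A,B)$ and $b_2a_2 \in E(B,A)$ and stitch Hamilton paths in $T_A$ and $T_B$ between the prescribed endpoints; this silently assumes $a_1, a_2, b_1, b_2$ are four distinct vertices. But a priori all crossings in one direction could be incident to a single vertex --- for instance, all of $E(A,B)$ could emanate from one $a_0 \in A$ and all of $E(B,A)$ could terminate at one $a_0'$, possibly with $a_0 = a_0'$ --- in which case no pair of vertex-disjoint oppositely-directed crossings exists and the concatenation fails outright (if $a_0 = a_0'$ the two halves are joined through a cut vertex and no Hamilton cycle exists at all, so one must show this configuration cannot occur). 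Ruling this out in the Jackson range requires a counting argument combining the regularity identity $e_G(A,B) = e_G(B,A)$, the constraint $n \leq 4d+1$, and the oriented-graph bound $d^+(v,S) + d^-(v,S) \leq |S| - 1$ for $v \in S$. This is precisely the delicate bookkeeping carried out in the proof of the paper's Lemma~\ref{lma:regdigraph}, where a vertex $w_{ij}$ carrying all crossings between two parts leads to a contradiction via degree sums against the bound $|V_i| \leq qd+1$. Your sketch correctly flags the extremal case as the crux but does not identify or address this specific obstruction.
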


	Generalizing questions about Hamilton cycles, one can consider the question of covering the vertices of a (di)graph with as few vertex-disjoint cycles as possible. Indeed, we prove Theorem~\ref{thm:Jackson} by showing a more general result about covering regular digraphs and oriented graphs with few vertex-disjoint cycles.

	\begin{theorem} \label{thm:main}
		For all $\alpha >0$, there exists $n_0 = n_0(\alpha)$ such that every $d$-regular digraph
		$G$ on $n\geq n_0$ vertices with $d \ge \alpha n $ can be covered by at most $n/(d+1)$ vertex-disjoint cycles.
		Moreover if $G$ is an oriented graph, then at most $n/(2d+1)$ cycles suffice.
	\end{theorem}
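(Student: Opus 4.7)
The plan is to prove Theorem~\ref{thm:main} by starting from an arbitrary cycle cover of $G$ and iteratively merging cycles until the target bound is met. The starting point is that every $d$-regular digraph admits a $1$-factor, i.e., a partition of $V(G)$ into vertex-disjoint directed cycles, which follows from a standard application of Hall's theorem to the associated bipartite double cover. In our regime $d\ge \alpha n$, the target bound $n/(d+1)\le 1/\alpha$ (respectively $n/(2d+1)\le 1/(2\alpha)$ for oriented graphs) is an absolute constant, so it suffices to reduce the cycle count to a bounded quantity. Let $\mathcal{C}$ be a cycle cover with the fewest cycles; the basic tool is a \emph{merging principle}: if $C_1,C_2\in\mathcal{C}$ and there exist arcs $u_1v_1\in C_1$, $u_2v_2\in C_2$ with $u_1v_2,u_2v_1\in E(G)$, then deleting the $u_iv_i$ and inserting the cross-arcs produces a single cycle on $V(C_1)\cup V(C_2)$, contradicting the minimality of $|\mathcal{C}|$.

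The analysis splits via a stability dichotomy according to whether $G$ is close to a disjoint union of $\lceil n/(d+1)\rceil$ cliques of order $\approx d+1$ (respectively $\lceil n/(2d+1)\rceil$ near-regular tournaments of order $\approx 2d+1$ in the oriented case). In the \emph{non-extremal} case, I would show that the density hypothesis together with the assumed far-from-extremal structure forces some pair of cycles in $\mathcal{C}$ to be mergeable: a double-counting argument on arcs between short cycles of $\mathcal{C}$ and the rest of $G$, combined with a pigeonhole step to locate compatible pairs of arcs, yields the required $u_1,v_1,u_2,v_2$. In the \emph{extremal} case, I would identify blocks $B_1,\dots,B_k$ absorbing almost all arcs of $G$, handle the few exceptional vertices lying between blocks by routing each into a suitable block, and then produce a Hamilton cycle inside each modified $B_i$ using Ghouila--Houri's theorem in the digraph case and the approximate Jackson theorem from~\cite{LPY} in the oriented case. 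The bound $k\le n/(d+1)$ (or $k\le n/(2d+1)$) is then automatic from the block-size lower bound.

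The main obstacle is the extremal case, because the target bound is tight and absorbing exceptional vertices cannot afford to create extra cycles. This requires a robust, quantitative identification of the block partition tolerating $o(d)$ misclassifications, together with a Hall-type matching argument that simultaneously assigns each exceptional vertex $v$ to a block $B_i$ and selects an arc $w_1w_2$ of the Hamilton cycle of $B_i\cup\{v\}$ such that $w_1v,vw_2\in E(G)$. The oriented case is especially delicate, because within a block the semi-degree is only $\approx |B_i|/4$, which is exactly the Jackson threshold, so the degree sequence after absorbing exceptional vertices must be controlled tightly enough to remain in the approximate-Jackson regime.
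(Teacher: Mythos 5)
The proposal diverges fundamentally from the paper's approach: the paper derives a structural decomposition $\mathcal{P}=\{V_{ij}\}$ into bipartite robust expanders (Theorem~\ref{thm:digraphstructure}), balances it by contracting a carefully constructed path system (found via a flow argument plus a separate extremal lemma), and then builds one long cycle per connected component of the auxiliary graph $S(\mathcal{P})$ via Lemma~\ref{lma:longcycle2}. Your approach instead starts from a minimum $1$-factor and argues that minimality forces the count to be small, with a stability dichotomy.

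There is a genuine gap in the non-extremal branch. The claimed ``double-counting argument on arcs between short cycles combined with a pigeonhole step'' to locate a crossing pair $u_1v_1\in C_1$, $u_2v_2\in C_2$, $u_1v_2,u_2v_1\in E(G)$ is not justified, and in the directed setting there is no reason such a pair must exist even when $e(C_1,C_2)+e(C_2,C_1)$ is large. The arcs between two cycles can all go in one direction, or can be concentrated on a few vertices, and regularity only constrains global degree sums, not the local alignment of arcs with the cyclic successor structure that the swap requires. This is precisely the difficulty that Pósa-type rotation--extension does not overcome in digraphs and that the robust-expander machinery was introduced to handle; the paper never attempts a direct merging argument. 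Without a concrete mechanism replacing ``pigeonhole'' (e.g., a quantified expansion property allowing one to rotate a cycle to align a good arc, or a flow-type argument), the non-extremal case is not established. Note also that a \emph{minimum} cycle cover need not be reachable from an arbitrary one by single swaps, so the local-search framing only works if you can always exhibit a swap whenever the count exceeds the target, which is the unproved step.

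In the extremal branch your outline is closer in spirit to what the paper does (the extremal case is handled separately by Lemma~\ref{lma:regdigraph}), but it is also underspecified in a way that matters. Absorbing exceptional vertices while keeping exactly the right number of cycles requires that the resulting modified blocks each still admit a Hamilton cycle, and Ghouila--Houri (resp.\ the approximate Jackson theorem) applies only if the minimum semi-degree inside each modified block stays above the threshold. A single misassigned exceptional vertex lowers the effective semi-degree of the block, and since the extremal case has essentially no slack, the Hall-type assignment you describe must simultaneously respect semi-degree constraints in all blocks --- this is a balancing problem, not just a matching problem, and it is not clear that a Hall condition alone resolves it. The paper instead encodes this as a flow/balanced-path-system problem, which is what actually delivers the tight bound.
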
	
	
	This is best possible by considering the disjoint union of complete digraphs of order $d+1$ for digraphs and the disjoint union of regular tournaments of order $2d+1$ for oriented graphs. Notice that we have $n/(2d+1)<2$ when $n\leq 4d+1$, so that Theorem~\ref{thm:main} implies Theorem~\ref{thm:Jackson}. We also point out that the proof of Theorem~\ref{thm:main} in fact shows that each cycle is relatively long (of length at least $d/2$).
 
 Theorem~\ref{thm:main} generalizes the following result of Gruslys and Letzter~\cite{GruslysLetzter} from regular graphs to regular digraphs and oriented graphs.

	\begin{theorem}[Gruslys and Letzter~\cite{GruslysLetzter}] \label{thm:GruslysLetzter}
		For all $\alpha >0$, there exists $n_0 = n_0(\alpha)$ such that every $d$-regular graph on $n \ge n_0$ vertices with $d \ge \alpha n $ can be covered by at most $n/(d+1)$ vertex-disjoint cycles.
	\end{theorem}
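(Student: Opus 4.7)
The plan is to follow a stability--extremal strategy combined with the absorption method, which is standard for exact extremal results of this flavour. First I would dispose of the easy case $d\geq n/2$: Dirac's theorem then gives a Hamilton cycle of $G$, so a single cycle suffices, and $1\leq n/(d+1)$ trivially. Thus we may assume $\alpha n\leq d<n/2$, in which case $n/(d+1)\geq 2$, and the extremal target configuration is the disjoint union of $k:=n/(d+1)$ cliques of order $d+1$.

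The first substantive step is to establish an approximate version: for any small $\eta>0$ (depending on $\alpha$), $G$ admits a cover by at most $n/(d+1)+\eta n$ vertex-disjoint cycles, each of length linear in $d$. The natural tool is Szemer\'edi's regularity lemma, producing an $\varepsilon$-regular partition with reduced graph $R$. Since $G$ is $d$-regular, $R$ inherits an approximate $(d/n)$-regularity; in each dense regular pair one can route long paths via the blow-up lemma, and stitching these together along suitable walks in $R$ yields the approximate cover. Exceptional vertices (from the regularity partition) are handled by a small absorbing path prepared at the outset.

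The second step is the stability dichotomy. Either $G$ is $\delta$-far from the disjoint union of $k$ cliques $K_{d+1}$, in which case one can merge cycles across clusters of $R$ and strictly beat the bound $n/(d+1)$ (the slack $\eta n$ from the approximate phase gives room for the extra merges); or $G$ is close to this configuration, meaning there is a partition $V(G)=V_1\sqcup\cdots\sqcup V_k$ with $|V_i|\approx d+1$, almost all edges lying inside the parts, and each $G[V_i]$ almost complete.

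The third and hardest step is the exact extremal case. One must refine the partition so that $|V_i|=d+1$ for every $i$, while ensuring that each refined $G[V_i]$ has minimum degree at least $(d+1)/2$, so Dirac gives a Hamilton cycle in each part and hence a cover by exactly $k=n/(d+1)$ cycles. The main obstacle is this rebalancing: vertices whose neighbourhoods straddle two parts must be reassigned to produce parts of exactly the right sizes while preserving a near-Dirac condition everywhere. I would phrase this as a bipartite matching or flow problem on the ``misplaced'' vertices, using $d$-regularity of $G$ to control the degree surplus at each part and to show that a suitable assignment exists. A minor additional concern is that $d+1$ may not divide $n$; in that case one of the $k=\lfloor n/(d+1)\rfloor$ parts has size slightly different from $d+1$, and one must verify that the Hamilton cycle in this last part survives the rebalancing, which again reduces to checking a Dirac-type condition there.
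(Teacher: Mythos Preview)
Your route is genuinely different from the paper's. In the paper, Theorem~\ref{thm:GruslysLetzter} is quoted as a result of Gruslys and Letzter and is not re-proved directly; instead, the paper observes in one line that it follows from the more general digraph statement, Theorem~\ref{thm:main}: replace every undirected edge by the two opposite directed edges to obtain a $d$-regular digraph, apply Theorem~\ref{thm:main} to get a cover by at most $n/(d+1)$ vertex-disjoint directed cycles, and then forget orientations. So the paper's ``proof'' of this statement is a corollary derivation, with all the work hidden inside the proof of Theorem~\ref{thm:main}, which proceeds via a robust-expander partition (Theorem~\ref{thm:digraphstructure}), a path-contraction mechanism, and a balanced-path-system lemma rather than via regularity, absorption, and a clique stability dichotomy.

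Your plan is in the spirit of a direct attack and is broadly the shape of Gruslys and Letzter's own argument, so it is not unreasonable as an outline. The weakest point is the extremal rebalancing: asserting that one can move misplaced vertices so that every part has size exactly $d+1$ \emph{and} retains minimum degree at least $(d+1)/2$ is precisely where the difficulty concentrates, and a bipartite-matching formulation alone does not obviously deliver this---a misplaced vertex may have its external neighbours spread thinly over many parts, so no single reassignment gives it Dirac degree. (Indeed, in the paper's framework the analogous step is the content of Lemma~\ref{lma:regdigraph}, which requires a nontrivial structural argument, not just a flow.) Your treatment of the non-divisible case $n\not\equiv 0\pmod{d+1}$ is also too brief: when $\lfloor n/(d+1)\rfloor=k$, the near-extremal partition can have $k+1$ parts rather than $k$, and one must merge two of them into a single Hamiltonian piece, which is a different task from the size-$d+1$ rebalancing you describe.
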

	Theorem~\ref{thm:main} implies Theorem~\ref{thm:GruslysLetzter} by making every edge into a directed $2$-cycle.
	
	\subsection{Related work}
	Theorem~\ref{thm:main} also has connections with several well-studied problems in extremal graph theory: here we mention some of them.

	\subsubsection{Path cover}
	
	A weaker version of cycle cover is path cover. The \textit{path cover number} of a (di)graph $G$, denoted by $\pi(G)$, is the minimum number of vertex-disjoint (directed) paths needed to cover $V(G)$. This was introduced by Ore~\cite{ORE-PathCover}, and he showed that $\pi(G)\leq n-\sigma_2(G)$ holds where $\sigma_2(G)$ denotes the minimum sum of the degrees over all non-adjacent vertices. Magnant and Martin~\cite{PathCover} conjectured that \textit{regularity} significantly reduces the upper bound for $\pi(G)$:
	
	\begin{conjecture}[Magnant and Martin~\cite{PathCover}]
		\label{conj:PATH-COVER}
		If $G$ is a $d$-regular graph on $n$ vertices, then $\pi(G)\leq n/(d+1)$.
	\end{conjecture}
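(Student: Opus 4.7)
The plan is to prove the stronger statement that $G$ admits a cover by at most $n/(d+1)$ vertex-disjoint cycles (treating isolated vertices and edges as degenerate cycles); deleting one edge from each genuine cycle then yields the desired path cover. With this reduction in hand, the problem naturally splits into a dense regime ($d \ge \alpha n$ for some fixed $\alpha > 0$) and a sparse regime ($d = o(n)$).

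In the dense regime the statement follows directly from Theorem~\ref{thm:GruslysLetzter} of Gruslys and Letzter; alternatively, one may apply Theorem~\ref{thm:main} to the $d$-regular digraph obtained by replacing each edge of $G$ with a pair of oppositely oriented edges. Very small $d$ (say $d \le 5$) can be handled by direct combinatorial arguments, so the real difficulty lies in the intermediate range where $d \to \infty$ but $d = o(n)$.

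For the intermediate range I would use a dichotomy between good expansion and near-extremality. When $G$ expands well, an absorption argument -- first reserve a small absorbing structure that can swallow any leftover vertices into an existing path, then greedily build long paths covering almost all of $V(G)$ -- should produce a cover by close to $n/(d+1)$ paths. When $G$ has a sparse cut, decompose along it and recurse on pieces that remain close to $d$-regular. The main obstacle will be the near-extremal case in which $G$ looks like a disjoint union of $(d+1)$-cliques, or blow-ups thereof, joined by a few extra edges: here a delicate stability analysis is required both to verify that the bound $n/(d+1)$ is tight and attainable and to control the number of additional paths introduced when recursing across sparse cuts. Making this stability argument uniform in $d$, particularly when $d$ is small compared to the clique size in the extremal configuration, is likely to be the most technically demanding step.
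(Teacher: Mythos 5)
This statement is labelled a \emph{conjecture} in the paper, not a theorem: the paper does not prove it, and to date it remains open in full generality. What the paper establishes is Theorem~\ref{thm:GruslysLetzter} (Gruslys--Letzter), which verifies the conjecture only in the dense regime $d \ge \alpha n$ for fixed $\alpha > 0$ and $n$ sufficiently large, and the paper's Theorem~\ref{thm:main} extends that dense-case verification to digraphs and oriented graphs. The paper also cites known verifications for $d \le 5$ (Magnant--Martin) and $d = 6$ (Feige--Fuchs). There is therefore no ``paper's own proof'' to compare your attempt against.

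Your reduction from path covers to cycle covers and the observation that the dense case follows from Theorem~\ref{thm:GruslysLetzter} (or from Theorem~\ref{thm:main} applied to the associated bidirected digraph) are both correct. However, everything you write about the intermediate range $d \to \infty$, $d = o(n)$ is a plan, not a proof: phrases such as ``should produce a cover,'' ``a delicate stability analysis is required,'' and ``is likely to be the most technically demanding step'' flag precisely the steps that are missing. In particular, the absorption-plus-expansion strategy and the recursion across sparse cuts are not carried out, and it is exactly in that regime (where neither the existing dense machinery nor the small-$d$ case analyses apply) that the conjecture is genuinely open. So the proposal does not constitute a proof of the statement; it reduces to and then restates the hard, unsolved part of the problem.

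If your goal was to prove what the \emph{paper} actually proves about this conjecture, you should instead restrict to the dense case and cite (or reprove) Theorem~\ref{thm:GruslysLetzter}, as the paper does.
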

	
	It is known that Conjecture~\ref{conj:PATH-COVER} holds for small values of $d$ (see~\cite{PathCover} for $d\leq 5$ and see~\cite{Feige-Fuchs} for $d=6$).  
	Han~\cite{Han-Path-Cover} showed that, for dense graphs, it is enough to use $1+n/(d+1)$ paths to cover almost all vertices. Also, Theorem~\ref{thm:GruslysLetzter} verifies Conjecture~\ref{conj:PATH-COVER} in the dense case. It is worth noting that the Linear Arboricity Conjecture~\cite{linear-arboricity} implies Conjecture~\ref{conj:PATH-COVER} for odd values of $d$, and gives $\pi(G)\leq 2n/(d+2)$ for general $d$ (see~\cite{Feige-Fuchs} for a detailed discussion).
	
	For digraphs, the classical result of Gallai and Milgram~\cite{GAL-MIL} states that $\pi(G)$ can be bounded above by the size of the maximum independent set (and Dilworth's~\cite{DILWORTH} theorem says that equality holds for the special case of posets).
	As our Theorem~\ref{thm:main} generalizes Theorem~\ref{thm:GruslysLetzter} from graphs to digraphs and oriented graphs, we believe the following stronger version of Conjecture~\ref{conj:PATH-COVER} holds, which Theorem~\ref{thm:main} establishes in the dense case. 
	
	\begin{conjecture}
		\label{conj:DIRECTED-PATH-COVER}
		If $G$ is a $d$-regular digraph on $n$ vertices, then $\pi(G)\leq n/(d+1)$. Moreover, $\pi(G)\leq n/(2d+1)$ holds if $G$ is oriented. 
	\end{conjecture}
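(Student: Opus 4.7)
Since this is stated as a conjecture, I will outline what can actually be proved and where the difficulty lies. The \emph{dense case} is in fact an immediate consequence of Theorem~\ref{thm:main}: given any cycle cover of $G$ with $k$ cycles, deleting one arc from each cycle produces a collection of $k$ vertex-disjoint directed paths that still covers $V(G)$, so $\pi(G)\le k$. Plugging in $k\le n/(d+1)$ (resp.\ $k\le n/(2d+1)$) from Theorem~\ref{thm:main} yields the conjectured bound for every $d$-regular digraph (resp.\ oriented graph) with $d\ge\alpha n$ and $n$ sufficiently large. So the dense half of the conjecture is a one-line corollary; the interesting content lies entirely outside the scope of the present paper.

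The genuine obstacle is the sparse range $d=o(n)$, where the techniques behind Theorem~\ref{thm:main} — regularity decomposition, robust expansion, and absorption — are unavailable. One conceivable route is via a directed analogue of the Linear Arboricity Conjecture: decompose the arcs of $G$ into roughly $d$ (respectively $2d$) \emph{linear directed forests}, so that a pigeonhole argument on the number of path endpoints yields the desired bound, paralleling the reduction mentioned in the excerpt for the undirected setting. A second plan would be to induct on $d$, peeling off regular spanning subdigraphs of smaller degree and arguing that each peel contributes few new path endpoints.

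The main difficulty, and the same one that blocks the undirected Conjecture~\ref{conj:PATH-COVER} for general $d$, is the \emph{gluing} step in the sparse regime. For small $d$ the extremal configurations — vertex-transitive Cayley digraphs, near-disjoint unions of small tournaments, and similar highly symmetric objects — are too rigid to allow the path-concatenation and rerouting tricks that density makes routine. A realistic starting point would therefore be to settle the case of small $d$ (say $d\le 3$ or $d\le 6$, matching what is known in the undirected setting) by an ad hoc structural analysis, and only then look for a unified argument. I would not expect a direct extension of the methods used for Theorem~\ref{thm:main} to resolve the full conjecture.
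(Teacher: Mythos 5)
You have correctly recognised that the statement is a \emph{conjecture}, not a theorem, so the paper offers no proof to compare against; the paper itself only remarks that Theorem~\ref{thm:main} ``establishes [it] in the dense case.'' Your one-line derivation of the dense case — delete one arc from each cycle in the cycle cover guaranteed by Theorem~\ref{thm:main} to obtain a path cover of the same size — is exactly the intended reduction, and your assessment that the sparse regime $d = o(n)$ is the genuinely open part (where robust expansion is unavailable) is accurate. The speculative remarks about a directed linear-arboricity route and ad hoc small-$d$ analysis are reasonable context and do not purport to close the gap, so there is nothing to fault.
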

	
	Also, Conjecture~\ref{conj:DIRECTED-PATH-COVER} implies Conjecture~\ref{conj:PATH-COVER} by making every edge into a directed $2$-cycle.
	
	\subsubsection{Extending perfect matchings}
	
	Gruslys and Letzter~\cite{GruslysLetzter}, as well as proving Theorem~\ref{thm:GruslysLetzter}, proved that every large $d$-regular \emph{bipartite graph}
	$G$ on~$n$ vertices with $d$ linear in $n$ can be covered by at most $n/2d$ vertex-disjoint paths.
	They mentioned that one should be able to replace paths by cycles. Indeed, as a corollary of Theorem~\ref{thm:main}, the result below shows that those cycles can be found in such a way that they even contain any prescribed perfect matching.
	
	\begin{corollary} \label{thm:reg-bibpartite}
		For all $\alpha >0$, there exists $n_1 = n_1(\alpha)$ such that, for every $d$-regular bipartite graph on $n \ge n_1$ vertices with $d \ge \alpha n $, any perfect matching can be extended to vertex-disjoint cycles covering all vertices with at most $n/2d$ cycles.
	\end{corollary}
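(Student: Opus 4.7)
Let $H$ be the given $d$-regular bipartite graph on $n=2m$ vertices with parts $A=\{a_1,\dots,a_m\}$ and $B=\{b_1,\dots,b_m\}$, and let the prescribed perfect matching be $M=\{a_ib_i:i\in[m]\}$, relabelling if necessary. The plan is to reduce the question to a cycle cover of an auxiliary regular digraph and invoke Theorem~\ref{thm:main}. I define a digraph $G$ on vertex set $[m]$ by putting an arc from $i$ to $j$ (for $i\neq j$) precisely when $a_ib_j\in E(H)\setminus M$. Since $a_i$ has $d$ neighbours in $B$ and exactly one of them, namely $b_i$, is removed, the outdegree of $i$ in $G$ equals $d-1$; the symmetric argument applied to $b_j$ gives indegree $d-1$. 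Hence $G$ is a loopless $(d-1)$-regular digraph on $m=n/2$ vertices, and $d-1\ge (\alpha/2)m$ whenever $n$ is sufficiently large.

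Applying Theorem~\ref{thm:main} to $G$ with parameter $\alpha/2$, I obtain a collection $\mathcal{C}$ of at most $m/((d-1)+1)=m/d=n/(2d)$ vertex-disjoint directed cycles that together cover $V(G)=[m]$. Each cycle $i_1\to i_2\to\cdots\to i_\ell\to i_1$ in $\mathcal{C}$ lifts to a cycle in $H$ of length $2\ell$ that alternates matching and non-matching edges, namely
\[
a_{i_1}\,b_{i_1}\,a_{i_\ell}\,b_{i_\ell}\,a_{i_{\ell-1}}\,b_{i_{\ell-1}}\,\cdots\,a_{i_2}\,b_{i_2}\,a_{i_1}.
\]
This uses the matching edges $a_{i_k}b_{i_k}$ for each $k\in[\ell]$ and uses the non-matching $H$-edges $a_{i_{k-1}}b_{i_k}$ provided by the arcs $i_{k-1}\to i_k$ of the $G$-cycle. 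Since the cycles of $\mathcal{C}$ partition $[m]$, the lifted cycles are vertex-disjoint, cover $V(H)$, and by construction contain every edge of $M$, giving the desired $M$-extending cycle cover of $H$ with at most $n/(2d)$ cycles.

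The entire content of the proof is the reduction: once one notices that contracting the matching $M$ converts a perfect-matching-extending cycle cover of $H$ into an ordinary cycle cover of a $(d-1)$-regular loopless digraph on $n/2$ vertices, Theorem~\ref{thm:main} supplies the bound and there is no serious obstacle. The only points requiring attention are routine: verifying that $G$ is indeed $(d-1)$-regular and loopless, checking that the degree hypothesis $d-1\ge\alpha'(n/2)$ of Theorem~\ref{thm:main} can be met for large $n$, and confirming that the arithmetic of the bound gives exactly $m/d=n/(2d)$.
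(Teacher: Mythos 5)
Your proof is correct and takes essentially the same approach as the paper: contract the matching $M$ to obtain a $(d-1)$-regular auxiliary digraph on $n/2$ vertices, apply Theorem~\ref{thm:main} to get a cycle cover with at most $n/(2d)$ cycles, and lift each directed cycle back to an alternating cycle in $H$ that uses the matching edges. The only cosmetic difference is that you traverse the lifted cycle in the opposite direction from the paper and take the slightly more generous parameter $\alpha/2$ rather than $\alpha$ when applying Theorem~\ref{thm:main}; both choices are routine and valid.
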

	
	\begin{proof}[Proof of Corollary~$\ref{thm:reg-bibpartite}$]
		Let $\alpha > 0$ and $n_1 = 2 n_0(\alpha)$, where $n_0$ is the function given in Theorem~\ref{thm:main}.
		Let $G$ be a $d$-regular bipartite graph on $n \ge n_1$ vertices with $d \ge \alpha n $ and vertex classes $X$ and~$Y$. 
		Since $G$ is bipartite and regular, $n$ is even and $|X| = |Y| = n/2$.
		Let $M$ be any perfect matching of~$G$.
		Let $X = \{ x_1, \dots, x_{n/2}\}$ and $Y = \{y_1, \dots, y_{n/2}\}$ be such that $x_iy_i \in E(M)$ for all~$i$. 
		Define the digraph~$H$ on~$X$ such that for any distinct $i,j\in [n/2]$, $x_ix_j \in E(H)$ if and only if $x_i  y_j \in E(G)$. 
		Note that $H$ is $(d-1)$-regular on $n/2$ vertices. 
		By Theorem~\ref{thm:main}, $H$ can be covered by at most $n/2d$ vertex-disjoint cycles.
		Note that a (directed) cycle $x_{i_1} x_{i_2}\dots x_{i_\ell}$ in~$H$ corresponds to a cycle $x_{i_1} y_{i_2 } x_{i_2} y_{i_3}\dots x_{i_\ell} y_{i_1}$ in~$G$.
		Therefore $G$ can be covered by at most $n/2d$ vertex-disjoint cycles that contain all the edges of~$M$.
	\end{proof}
	
	Note that Corollary~\ref{thm:reg-bibpartite} is tight by considering the disjoint union of $n/2d$ many $K_{d,d}$'s. It also shows that $d$-regular bipartite graphs on $n$ (sufficiently large) vertices with $d>n/4$ are examples of graphs in which every perfect matching can be extended into a Hamilton cycle. This property is called the PMH-property in~\cite{Line-Graphs-PMH}. H{\"a}ggkvist~\cite{Haggvist-F-Hamiltonian} initiated the study of sufficient conditions for the PMH-property (using the name $F$-Hamiltonian where $F$ is a perfect matching) by showing $\sigma_2(G)\geq n+1$ is sufficient. Las Vergnas~\cite{las-vergnas} proved a similar condition
	for bipartite graphs, which also (almost) implies Corollary~\ref{thm:reg-bibpartite} in the case $d>n/4$, and Yang~\cite{Yang-F-Hamiltonian} gave minimum edge density conditions to guarantee the PMH property in graphs and bipartite graphs. In the sparse setting, as a special case of a conjecture of Ruskey and Savage~\cite{HYPERCUBE-MATCHING}, Fink~\cite{FINK-HYPERCUBE} proved that the hypercube has the PMH-property.

	\subsection{Extremal examples for Conjecture~\ref{conj:Jackson}}
	\label{sec:examples+proofs}
	
	We end the introduction with the deferred examples from earlier. Note that we give a general overview of the paper at the end of the next section.
	
	Recall that the disjoint union of two regular tournaments on $n/2$ vertices shows that Jackson's conjecture (Conjecture~\ref{conj:Jackson}) is best possible. This example works when $n \equiv 2 \pmod{4}$ and similar examples for other values of $n$ are constructed as follows.
	When $n \equiv 0 \pmod{4}$, we take the disjoint union of two complete graphs on $n/2$ vertices, remove a perfect matching, and orient the edges so that the resulting oriented graph is regular. When $n \equiv 3 \pmod{4}$, we take a disjoint union of a regular tournament on $(n-1)/2$ vertices and a regular orientation of the complete graph minus a perfect matching on $(n+1)/2$ vertices. 
	When $n \equiv 1 \pmod{4}$, we take a regular orientation of a disjoint union of a clique minus a perfect matching on $(n-1)/2$ vertices and a clique minus a Hamilton cycle on $(n+1)/2$ vertices.

	\section{Notation and preliminaries}

	Throughout the paper, we use standard graph theory notation and terminology.
	For $k\in\mathbb{N}$, we sometimes denote the set $\{1,2,\ldots,k\}$ by $[k]$. 
	For a digraph~$G$, we denote its vertex set by $V(G)$ and its edge set $E(G)$, and sometimes write
	$|G| := |V(G)|$ and $e(G) := |E(G)|$. For $a,b \in V(G)$, we write $ab$ for the directed edge from $a$ to~$b$. We write $H \subseteq G$ to mean $H$ is a \textit{subdigraph} of $G$, i.e.\ $V(H) \subseteq V(G)$ and $E(H) \subseteq E(G)$.
	We sometimes think of $F \subseteq E(G)$ as a subdigraph of $G$ with vertex set consisting of those vertices incident to edges in~$F$ and with edge set~$F$. We write $G-F$ for the digraph obtained from $G$ by deleting the edges in~$F$.
	For $S \subseteq V(G)$, we write $G[S]$ for the subdigraph of $G$ induced by $S$ and $G-S$ for the digraph $G[V(G) \setminus S]$. 
	For $A,B \subseteq V(G)$ not necessarily disjoint, we define $E_G(A,B):= \{ab \in E(G): a \in A, \; b \in B \}$ and we write  $e_G(A,B) := |E_G(A,B)|$\footnote{ We DO NOT write $G[A,B]$ for the graph with vertex set $A \cup B$ and edge set $E_G(A,B)$, but instead for a bipartite undirected graph; see below. }.
	We often drop subscripts if these are clear from context. For two digraphs $H_1$ and $H_2$, the union $H_1 \cup H_2$ is the digraph with vertex set $V(H_1) \cup V(H_2)$ and edge set $E(H_1) \cup E(H_2)$. 
	We say an undirected graph $G$ is bipartite with bipartition $A,B$ if $V(G)=A\cup B$ and $E(G) \subseteq \{ab: a \in A, \; b \in B \}$.

	For a digraph $G$ and $v\in V(G)$, we denote the set of outneighbours and inneighbours of $v$ by $N_G^{+}(v)$ and $N_G^{-}(v)$, respectively, and we write $d_G^{+}(v):=|N_G^{+}(v)|$ and $d_G^{-}(v):=|N_G^{-}(v)|$ for the out- and indegree of~$v$, respectively.  
	For $S\subseteq V(G)$ we write $d^{-}(v,S):= |N_G^-(v) \cap S|$ and $d^{+}(v,S):= |N_G^+(v) \cap S|$. 
	We write $\delta^+(G)$ and $\delta^-(G)$, respectively, for the minimum out- and indegree of $G$ and $\delta^{0}(G):=\min\{\delta^{+}(G),\delta^{-}(G)\}$ for the minimum semi-degree.
	Similarly, the maximum semi-degree~$\Delta^{0}(G)$ of~$G$ is defined by $\Delta^{0}(G):=\max\{\Delta^{+}(G),\Delta^{-}(G)\}$ where $\Delta^{+}(G)$ and $\Delta^{-}(G)$ denote the maximum out- and maximum indegree of~$G$, respectively.
	A digraph is called \textit{$d$-regular} if each vertex has exactly $d$ outneighbours and $d$ inneighbours. 
	
	The notation above extends to undirected graphs in the obvious ways. 
	In particular for an undirected graph~$G$, we write $\Delta(G)$ and $\delta(G)$, respectively for the maximum degree and the minimum degree. 
	A graph is called $d$-regular if each vertex has exactly $d$ neighbours. For a vertex~$v \in V(G)$ and subset $S \subseteq V(G)$, we write $d_G(v,S):=|N_G(v) \cap S|$.

	A directed path~$Q$ in a digraph~$G$ is a subdigraph of~$G$ such that $V(Q) = \{v_1, \ldots, v_k \}$ for some~$k \in \mathbb{N}$ and  $E(Q) = \{v_1v_2, v_2v_3, \ldots, v_{k-1}v_k \}$. We denote such a directed path by its vertices in order, i.e.\ we write $Q = v_1v_2 \cdots v_k$.
	A directed cycle in $G$ is exactly the same except that it also includes the edge $v_kv_1$. Sometimes we identify paths with their edge sets.

	A set of vertex-disjoint directed paths $\mathcal{Q}=\{Q_1,Q_2,\ldots\}$ in a digraph $G$ is called a \textit{path system} in~$G$. We interchangeably think of $\mathcal{Q}$ as a set of vertex-disjoint directed paths in $G$ and as a subdigraph of $G$ with vertex set $V(\mathcal{Q}) = \bigcup_{i} V(Q_i)$ and edge set $E(\mathcal{Q}) = \bigcup_{i} E(Q_i)$. We sometimes call this subdigraph the graph induced by~$\mathcal{Q}$.  
	A matching~$M$ in a digraph (or undirected graph)~$G$ is a set of edges $M \subseteq E(G)$ such that every vertex of~$G$ is incident to at most one edge in~$M$. 
	If $M = \{a_ib_i: i \in[m]\}$ is a matching in a digraph, then we write $V^+(M):=	\{a_i : i \in [m]\}$ and $V^-(M) = \{ b_i : i \in [m] \}$. A \textit{$1$-factor} in a digraph $G$ is a set of vertex-disjoint directed cycles whose union has the same vertices as $G$.

	Throughout the paper, we will work with partitions of a vertex set $V$ of the form
	$\{V_{ij}: i,j \in [k]\}$ for some $k \in \mathbb{N}$ (so a partition into $k^2$ parts). 
	For each $i$, we write $V_{i*} := V_{i1} \cup \cdots \cup  V_{ik}$ and $V_{*i} := V_{1i} \cup \cdots \cup V_{ki}$. Note that $\{V_{i*}: i \in [k]\}$ is a partition of $V$ as is $\{V_{*i}: i \in [k]\}$. 
	Note also that $V_{ij} = V_{i*} \cap V_{*j}$.

	For two sets $A$ and $B$, the \textit{symmetric difference} of $A$ and $B$ is the set $A\triangle B := (A\setminus B) \cup (B\setminus A)$.
	For $x, y \in (0,1]$, we often use the notation $x \ll y$ to mean that $x$ is sufficiently small as a function of $y$ i.e.\ $x \leq f(y)$ for some implicitly given non-decreasing function $f:(0,1] \rightarrow (0,1]$. 
	We implicitly assume all constants in such hierarchies are positive, and we omit floors and ceilings whenever this does not affect the argument.

	We use the following non-standard notation. 
	Let $G$ be a directed graph and let $U,W \subseteq V(G)$ not necessarily disjoint. 
	We define $G[U,W]$ to be the auxiliary undirected bipartite graph with bipartition $U , W$ where, for each $u \in U$ and $w \in W$, $uv$ is an (undirected) edge of $G[U,W]$ if and only if $uv$ is a directed edge in $E(G)$. Note that for each vertex in $U\cap W$, there are two copies of the vertex in $G[U,W]$ which are viewed as distinct. So $G[U,W]$ has $|U| + |W|$ vertices and $e_G(U,W)$ edges.

	\subsection{Sketch of proof of Theorem~\ref{thm:main}}
	
	Theorem~\ref{thm:main} is proved in several steps. One of the key ingredients is  a structural result for directed graphs (Theorem~\ref{thm:digraphstructure}) that we derive from a result of K{\"u}hn, Lo, Osthus and Staden~\cite{IntoTwoBipartiteExpander} 
	about partitioning dense regular undirected graphs into robust expanders. Robust expansion is a notion introduced and used by K{\"u}hn and Osthus together with several coauthors to obtain a number of breakthrough results on (di)graph decompositions and Hamiltonicity (see e.g.~\cite{RobustExpanderHamilton, On-Kelly-Conjecture, IntoTwoBipartiteExpander}). In Section~\ref{sec:Robust}, we give the necessary background on robust expansion before proving Theorem~\ref{thm:digraphstructure}, which we informally describe below.

	Informally, robust expanders are dense (di)graphs that are highly connected in a certain  sense, and one of their key properties is that they are Hamiltonian under suitable (mild) degree conditions (see Theorem~\ref{thm:RobustExpanderImpliesHamilton}). Moreover, they are robust to small alterations (see Lemma~\ref{lma:birobexpdifferent}). If we could show that every $d$-regular digraph (resp. oriented graph) can be partitioned into at most $n/(d+1)$ (resp. $n/(2d+1)$) robust expanders, it would be enough to prove Theorem~\ref{thm:main}. Such a partition does not exist in general, but our structural result, Theorem~\ref{thm:digraphstructure}, gives us a starting point. Roughly, it says that for any $d$-regular $n$-vertex digraph $G$ with $d$ linear in $n$ and $n$ sufficiently large, there exist two vertex partitions $V(G)=V_{1*} \cup \cdots \cup V_{k*}$ and $V(G)=V_{*1} \cup \cdots \cup V_{*k}$ with $k\leq 1+n/(d+1)$ such that for each~$i\in[k]$, $G[V_{i*}, V_{*i}]$ is a bipartite robust expander and $|V_{i*}|\approx |V_{*i}|$.
	Note here that $k$, the number of parts in each partition, is at most one more than the number of cycles we desire (in the case of digraphs).
	Writing $V_{ij} = V_{i*} \cap V_{*j}$ for all $i,j\in[k]$, we can think of our two partitions as a single $k^2$-partition $\mathcal{P}=\{V_{ij}: i,j \in [k]\}$ of $V(G)$.
	
	If the partition $\mathcal{P}$ described above is balanced, meaning that $|V_{i*}| = |V_{*i}|$ for all $i$, then it turns out that $G$ can be covered by $k$ vertex-disjoint cycles.\footnote{This follows essentially from Lemma~\ref{lma:longcycle2} although we do not show it explicitly in the paper.} 
	So to prove Theorem~\ref{thm:main}, we would like to ensure our partition is balanced, and when $k = 1+ \lfloor n/(d+1) \rfloor$, we would like to slightly improve on the number of cycles.
	For the latter, in Section~\ref{sec:longcycles}, we define an auxiliary graph $S(\mathcal{P})$ for the partition~$\mathcal{P}$, which has vertex set $[k]$ and $ij\in E(S(\mathcal{P}))$ if and only if $V_{ij}\cup V_{ji}\neq \emptyset$. 
	For each connected component $I \subseteq [k]$ of $S(\mathcal{P})$, Lemma~\ref{lma:longcycle2} shows how to find a cycle whose vertices are exactly $\bigcup_{i \in I} V_{i*} \cup V_{*i}$ (provided $\mathcal{P}$ is balanced). These cycles are necessarily disjoint for different connected components of $S(\mathcal{P})$, so we obtain a cycle partition where the number of cycles is exactly the number of connected components of $S(\mathcal{P})$.
	For digraphs, if $S(\mathcal{P})$ has at least one edge (so has at most $k-1 \leq n/(d+1)$ connected components), then Theorem~\ref{thm:main} follows. Similarly, for oriented graphs, one can show (using  Proposition~\ref{prop:orientedcomponent}) that at least one edge in $S(\mathcal{P})$ is enough to prove Theorem~\ref{thm:main} for oriented graphs.
	Thus, it is enough if our partition $\mathcal{P}$ is balanced and $S(\mathcal{P})$ has at least one edge.

	To ensure our partition is balanced, i.e.\ that $|V_{i*}| = |V_{*i}|$ for all $i$, note first that, as mentioned earlier, Theorem~\ref{thm:digraphstructure} already guarantees that $|V_{i*}|\approx |V_{*i}|$ for all $i$. The idea will be to find and contract a suitable collection of vertex-disjoint paths $\mathcal{Q}$.
	Here suitable simply means that the numbers of edges of $\mathcal{Q}$ between the various parts of $\mathcal{P}$ are related in the right way, in which case we call $\mathcal{Q}$ a $\mathcal{P}$-balanced path system (introduced in Section~\ref{sec:Path-Contraction}).
	If we can find such a $\mathcal{P}$-balanced path system with few edges, then contracting the paths will result in a graph with an adjusted partition~$\mathcal{P}'$ that is balanced and has similar properties as $\mathcal{P}$ (Proposition~\ref{prop:contracting}), and any cycle partition in the contracted graph can be lifted to a cycle partition in the original graph (Propositions~\ref{prop:1-fact}). If the path system $\mathcal{Q}$ satisfies some further properties then we can also guarantee that $S(\mathcal{P}')$ has at least one edge and this will be enough to prove Theorem~\ref{thm:main}.
	Lemma~\ref{lma:balancingpathsystemcombined} says that we can always find a $\mathcal{P}$-balanced path system with the required properties, and so we reduce the task of proving Theorem~\ref{thm:main} to the task of proving Lemma~\ref{lma:balancingpathsystemcombined}.

	In Section~\ref{sec:pfbalancingpathsystem} we reduce the proof of Lemma~\ref{lma:balancingpathsystemcombined} to two lemmas, namely  Lemmas~\ref{lem:balancedpath} and~\ref{lma:regdigraph}. Lemma~\ref{lem:balancedpath} allows us to find a $\mathcal{P}$-balanced path system with the desired properties under most circumstances, while Lemma~\ref{lma:regdigraph} (together with Proposition~\ref{prop:balancepartition}) allows us to find such a path system in the remaining ``extremal'' circumstances when $G$ is close to the disjoint union of cliques.
	Lemma~\ref{lem:balancedpath} is proved in Section~\ref{sec:pf_balancedpath} using a flow argument 
	and Lemma~\ref{lma:regdigraph} is proved in Section~\ref{sec:pf_regdigraph}. 
	These last two lemmas are the most technical parts of the paper, so
	we defer their sketch of proofs to their respective sections.

	\section{Robust expanders}	\label{sec:Robust}
	
	We first define robust expansion for graphs. 
	Let $0 < \nu \le \tau < 1$ and let $G$ be a graph on $n$ vertices. 
	For $S \subseteq V(G)$, the \emph{$\nu$-robust neighbourhood} $RN_{\nu,G}(S)$ is the set of all those vertices with at least $\nu n$ neighbours in~$S$. 
	We say that $G$ is a robust $(\nu, \tau)$-expander if every $S \subseteq V(G)$ with $\tau n \le |S| \le (1 - \tau) n$ satisfies $|RN_{\nu,G} (S)| \ge |S| + \nu n$. In fact we will mainly be concerned with bipartite robust expanders.
	Let $G$ be a bipartite graph with bipartition~$A, B$.
	We say that $G$ is a \emph{bipartite robust $(\nu, \tau)$-expander with bipartition~$A, B$} if every $S \subseteq A$ with $\tau |A| \le |S| \le (1 - \tau) |A|$ satisfies $|RN_{\nu,G} (S) \cap B| \ge |S| + \nu n$. 
	Note that the order of $A$ and $B$ matters here.
	Our first lemma says that bipartite robust expansion is robust to small alterations; the lemma can easily be derived from the definition of robust expansion.
	
	\begin{lemma}[{\cite[Lemma~3.4.9]{StadenThesis}}] \label{lma:birobexpdifferent}
		Let $0 < 1/n \ll  \nu \le \tau \ll 1$ with $\nu \le 1/2$.
		Let $G$ be a bipartite graph with $U \subseteq V(G)$. 
		Suppose $G[U]$ is a bipartite robust $(\nu, \tau)$-expander on $n$ vertices with bipartition~$A,B$ 
		and that $A',B' \subseteq V(G)$  are sets satisfying $|A\triangle A'|+|B \triangle B'|\leq \nu |A|/4$. 
		Then $G[ A' \cup B' ]$ is a bipartite robust $(\nu/2, 2\tau)$-expander with bipartition~$A',B'$.
	\end{lemma}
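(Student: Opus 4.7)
The plan is to start with an arbitrary set $S' \subseteq A'$ witnessing the $(\nu/2,2\tau)$-expansion condition in the conclusion and transfer the problem to a comparable set inside the known expander $G[U]$. Concretely, I set $S := S' \cap A$ and $\eta := |A\triangle A'|+|B\triangle B'|\le \nu|A|/4$, so that $|S\triangle S'|\le \eta$ and $||A'|-|A||\le \eta$. A short check using $\nu \le \tau$ and $\eta \le \nu|A|/4$ shows that the hypothesis $2\tau|A'| \le |S'| \le (1-2\tau)|A'|$ forces $\tau|A| \le |S| \le (1-\tau)|A|$, so the $(\nu,\tau)$-expansion assumption on $G[U]$ applies and yields $R := RN_{\nu, G[U]}(S) \cap B$ with $|R|\ge |S|+\nu n$, where $n=|U|$.

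Next I would show that $R$ is essentially preserved when passing to $G[A'\cup B']$. Since $R\subseteq B$, the loss $|R\setminus B'|$ is at most $|B\setminus B'|\le \eta$. Moreover, because $S\subseteq S'$, every $v\in R\cap B'$ satisfies $|N_G(v)\cap S'|\ge |N_G(v)\cap S|\ge \nu n$, and the bound $|A'|+|B'|\le n+\eta$ together with $\nu\le 1/2$ gives $\nu n \ge (\nu/2)(|A'|+|B'|)$. Hence $R\cap B'\subseteq RN_{\nu/2, G[A'\cup B']}(S')\cap B'$.

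To finish, I must verify that $|R\cap B'| \ge |S'|+(\nu/2)(|A'|+|B'|)$. Combining $|R\cap B'|\ge |S|+\nu n-\eta$ with $|S'|\le |S|+\eta$, the required inequality reduces to $\nu n-2\eta\ge (\nu/2)(|A'|+|B'|)$, which using $|A'|+|B'|\le n+\eta$ becomes $(\nu/2)n \ge \eta(1+\nu/2)$; this holds by a direct computation since $\eta\le \nu|A|/4\le \nu n/4$ and $\nu\le 1/2$.

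The argument is essentially a quantitative stability check, and the only real obstacle is arithmetic book-keeping: I need to confirm that the single slack $\eta\le \nu|A|/4$ is simultaneously enough to (i) keep $|S|$ inside the expansion window $[\tau|A|,(1-\tau)|A|]$, (ii) absorb the loss in the robust neighbourhood when passing from $B$ to $B'$, and (iii) bridge the gap between the thresholds $\nu n$ and $(\nu/2)(|A'|+|B'|)$. All three have comfortable margins under the hypothesis $\nu\le 1/2$, so no sharper parameter choices are required.
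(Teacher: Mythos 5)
Your overall strategy is the right one: pass from $S'\subseteq A'$ to $S:=S'\cap A$, apply the $(\nu,\tau)$-expansion of $G[U]$ to $S$, and show that the robust neighbourhood mostly survives the transfer to $G[A'\cup B']$. Steps 1--3 (checking that $S$ lands in the window $[\tau|A|,(1-\tau)|A|]$, that $|R\setminus B'|\le\eta$, and that $\nu n\ge(\nu/2)(|A'|+|B'|)$ so $R\cap B'\subseteq RN_{\nu/2,G[A'\cup B']}(S')$) are all fine. However, the final arithmetic has a slip that makes the closing computation fail as written. Substituting $|A'|+|B'|\le n+\eta$ into $\nu n-2\eta\ge(\nu/2)(|A'|+|B'|)$ yields
\begin{align*}
\nu n-2\eta\ge\tfrac{\nu}{2}(n+\eta)\iff\tfrac{\nu}{2}n\ge\left(2+\tfrac{\nu}{2}\right)\eta,
\end{align*}
not $\tfrac{\nu}{2}n\ge(1+\tfrac{\nu}{2})\eta$ as you wrote. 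And using only the crude estimate $\eta\le\nu|A|/4\le\nu n/4$, the corrected inequality fails: $(2+\nu/2)\cdot\nu n/4=\nu n/2+\nu^2n/8>\tfrac{\nu}{2}n$. So the ``comfortable margin'' you claim in the last paragraph does not in fact exist if one only uses $|A|\le n$.

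The repair is a short structural observation you did not make explicit: in a bipartite robust $(\nu,\tau)$-expander on $n$ vertices with bipartition $A,B$, the side $B$ cannot be too small. Taking any $S\subseteq A$ with $|S|=\lfloor(1-\tau)|A|\rfloor$ (such $S$ exists since $\tau\ll1$ and $1/n\ll\nu$), the expansion condition forces $|B|\ge|RN_{\nu,G[U]}(S)\cap B|\ge(1-\tau)|A|+\nu n-1$, hence $|A|\le n(1-\nu+1/n)/(2-\tau)$, which for $\tau\ll1$ is roughly $n/2$ (certainly $|A|\le(1-\nu)n$). Feeding this back in, $\eta\le\nu|A|/4$ is noticeably smaller than $\nu n/4$, and then $\tfrac{\nu}{2}n\ge(2+\tfrac{\nu}{2})\eta$ holds with room to spare. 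So your sketch captures the right idea, but the final verification requires this extra input about $|A|$ versus $n$ rather than the naive bound $|A|\le n$; as currently phrased the last step is a genuine gap.
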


	Next we give a structural result due to K{\"u}hn, Lo, Osthus and Staden~\cite{IntoTwoBipartiteExpander} which states that any regular graph of linear minimum degree has a vertex partition into a small number of parts where each part induces a robust expander or a bipartite robust expander. 
	In fact we state the special case of this result for bipartite graphs, which is all we require.

	\begin{theorem}[{Bipartite special case of~\cite[Theorem~3.1]{IntoTwoBipartiteExpander}}]
		\label{thm:structure}
		For all $\alpha, \tau > 0$ and every non-decreasing function $f: (0, 1) \rightarrow (0, 1)$, there exists $n_0$ such that the following holds.
		For all $d$-regular bipartite graphs $G$ on $2n \geq n_0$ vertices with bipartition $A,B$
		and $d \ge \alpha n$, there exist $\rho, \nu$ with
		\begin{align*}
			1/n_0 \le \rho \le \nu \le \tau; \qquad \rho \le f(\nu); \qquad \text{and} \qquad 1/n_0 \le f(\rho)
		\end{align*}
		such that there is a partition of~$V(G)$ into sets $A_1,\dots, A_k,B_1, \dots, B_k$ with the following properties for all $i \in [k]$:
		\begin{enumerate}[label={\rm (\roman*)}]
			\item \label{itm:structure1} $G[A_i \cup B_i]$ is a bipartite robust $(\nu, \tau)$-expander with bipartition $A_i,B_i$;
			\item \label{itm:structure2} for all $x \in A_i \cup B_i$ and $j \in [k]$, $d(x, A_i \cup B_i) \ge d(x, A_j \cup B_j) $, so  in particular, $\delta( G[ A_i  \cup B_i ] ) \ge d/k$;
			\item \label{itm:structure3} $\left| |A_i| - |B_i| \right| \le 2 \rho n$;
			\item \label{itm:structure4} all but at most $2 \rho n $ vertices $x \in A_i \cup B_i$ satisfy $d(x, A_i \cup B_i) \ge d - 2 \rho n $;
			\item  \label{itm:structure5} $k \le   n / (d-2 \rho n) $;
			\item \label{itm:structure6} $A_1, \ldots, A_k$ is a partition of $A$ and $B_1, \ldots, B_k$ is a partition of $B$.
		\end{enumerate}
	\end{theorem}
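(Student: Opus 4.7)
My plan is to prove the theorem by an iterative refinement procedure, following the strategy of the original paper of K{\"u}hn, Lo, Osthus and Staden. Start with the trivial partition $A_1 = A$, $B_1 = B$ (so $k = 1$), and at each stage either conclude (if every current $G[A_i \cup B_i]$ is already a bipartite robust $(\nu, \tau_t)$-expander with bipartition $A_i, B_i$) or refine by splitting one bad part and then cleaning up. To manage the constants, fix a hierarchy $1/n_0 \ll \rho \ll \nu \ll \tau_K \ll \cdots \ll \tau_1 \ll \tau_0 = \tau$, where $K = K(\alpha)$ is an a priori upper bound on the number of refinement rounds (justified below). Set $\rho := f(\nu)/2$ to enforce $\rho \le f(\nu)$, and pick $n_0$ large enough that $1/n_0 \le \min(\rho, f(\rho))$; this supplies all constants in the conclusion.

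The refinement step: if $G[A_i \cup B_i]$ fails to be a bipartite robust $(\nu, \tau_t)$-expander on bipartition $A_i, B_i$, pick a witnessing set $S \subseteq A_i$ with $\tau_t |A_i| \le |S| \le (1 - \tau_t) |A_i|$ and $|RN_{\nu, G}(S) \cap B_i| < |S| + \nu n$. Setting $T := RN_{\nu, G}(S) \cap B_i$, each vertex of $B_i \setminus T$ has fewer than $\nu n$ neighbours in $S$, so $e_G(S, B_i \setminus T) < \nu n |B_i|$; a symmetric bound on $e_G(A_i \setminus S, T)$ is secured by choosing $S$ minimal or by running the analogous argument in the $B \to A$ direction. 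Replace $(A_i, B_i)$ by the two new pairs $(S, T)$ and $(A_i \setminus S, B_i \setminus T)$, then perform a clean-up step that iteratively moves any vertex whose majority of edges lies in a different part into that part. The clean-up installs properties~\ref{itm:structure2} and~\ref{itm:structure4}, and by Lemma~\ref{lma:birobexpdifferent} it does not destroy the expander structure of already-processed parts because the total number of moved vertices is much smaller than $|A_i|$ and $|B_i|$.

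The main obstacle is to show that this procedure halts in at most $K = K(\alpha)$ rounds. Once a part $(A_i, B_i)$ has been cleaned up, property~\ref{itm:structure4} guarantees some vertex $v \in A_i \cup B_i$ with at least $d - 2\rho n$ neighbours inside $A_i \cup B_i$; since $G$ is bipartite these neighbours all lie on the opposite side of the bipartition, so $|A_i|, |B_i| \ge d - 2\rho n$. Summing over parts yields property~\ref{itm:structure5} ($k \le n/(d - 2\rho n) \le 2/\alpha$), and an appropriate potential argument (for instance tracking $\sum_i (|A_i|^2 + |B_i|^2)$, which drops by at least $\tau_t (d - 2\rho n)^2$ at each split and whose changes during clean-up are controlled by the sharp separation $\tau_{t+1} \ll \tau_t$) bounds the total number of rounds by a constant $K(\alpha)$, thereby validating the choice of hierarchy at the outset. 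Property~\ref{itm:structure3} then follows by double-counting: the $d$-regularity identities $d|A_i| = e_G(A_i, B_i) + e_G(A_i, B \setminus B_i)$ and $d|B_i| = e_G(A_i, B_i) + e_G(A \setminus A_i, B_i)$, together with property~\ref{itm:structure4} bounding the external terms, force $\bigl| |A_i| - |B_i| \bigr| \le 2 \rho n$; property~\ref{itm:structure6} is automatic because every split and every clean-up move respects the global bipartition $A, B$.
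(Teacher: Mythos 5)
The paper does not prove Theorem~\ref{thm:structure}: it is cited from~\cite[Theorem~3.1]{IntoTwoBipartiteExpander}, and Remark~\ref{rem:remark1} observes that \ref{itm:structure1}--\ref{itm:structure4} are part of the cited statement while \ref{itm:structure5}--\ref{itm:structure6} come from inspecting the cited proof (a footnote sketches the refinement strategy). You instead attempt an \emph{ab initio} re-proof by iterative refinement. That is the right high-level strategy and broadly matches the footnote's sketch, but the argument as written has concrete gaps.

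First, you claim that moving every vertex to the part containing the majority of its edges ``installs properties \ref{itm:structure2} and \ref{itm:structure4}.'' It installs \ref{itm:structure2}, but not \ref{itm:structure4}: the latter demands that all but $2\rho n$ vertices have \emph{almost all} of their $d$ edges inside their part, which is far stronger than the majority condition $d(x, A_i\cup B_i)\ge d/k$. A vertex splitting its degree roughly evenly between two parts violates \ref{itm:structure4} wherever you put it. In the K{\"u}hn--Lo--Osthus--Staden proof, \ref{itm:structure4} comes from a counting argument using the invariant, maintained throughout the refinement, that each part has few crossing edges; it is not the output of a local majority clean-up. Second, your derivation of \ref{itm:structure3} from \ref{itm:structure4} by double counting gives $\bigl||A_i|-|B_i|\bigr|\le 2\rho n(1+1/\alpha)$ rather than $\le 2\rho n$: the $\le 2\rho n$ exceptional vertices may each contribute up to $d$ crossing edges, and the remaining $\le |A_i|\le n$ good vertices up to $2\rho n$ each, so after dividing by $d\ge\alpha n$ the bound carries a factor $1+1/\alpha$. (This weaker bound would still suffice downstream, but it is not the stated \ref{itm:structure3}.) Third, the termination argument and the clean-up bookkeeping are hand-waved: interleaving a clean-up after every split makes it unclear that the total number of moved vertices stays below the $\nu|A_i|/4$ threshold of Lemma~\ref{lma:birobexpdifferent} for already-processed parts, whereas the cited proof refines first (preserving a small-edge-boundary invariant, which is what drives both termination and the lower bound on part sizes) and only adjusts vertices once at the very end.
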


	\begin{remark}
		\label{rem:remark1}
		Note that \ref{itm:structure1}--\ref{itm:structure4} follow directly from the statement of~{\rm\cite[Theorem~3.1]{IntoTwoBipartiteExpander}}. While \ref{itm:structure5} and \ref{itm:structure6} cannot be deduced immediately from the statement of~{\rm\cite[Theorem~3.1]{IntoTwoBipartiteExpander}}, they follow immediately from the proof.\footnote{
			The idea of the proof is to successively refine partitions of $V(G) = A \cup B$ as follows. 
			Assume we have obtained a partition $\mathcal{U} = \{U_1, \ldots, U_r\}$ of~$V(G)$ such that there are few edges leaving or entering~$U$ for every $U \in \mathcal{U}$.
			If for some $U \in \mathcal{U}$ we have that $G[U]$ is not a bipartite robust expander (with bipartition $U \cap A, U \cap B$), then there is some $S \subseteq U \cap A$ whose robust neighbourhood $R \subseteq U \cap B$ is not much larger than~$S$.
			Writing $U' = S \cup R$ and $U'' = U \setminus U'$, we let $\mathcal{U}' = \mathcal{U} \setminus \{U\} \cup \{U', U''\}$.
			It is not too hard to show (using the fact that $G$ is regular) that, as with $\mathcal{U}$, there are not many edges entering or leaving each $U \in \mathcal{U}'$.
			We continue refining the partition in this way until we obtain a partition~$\mathcal{U}^*$ where every $U \in \mathcal{U}^*$ satisfies that $G[U]$ is a bipartite robust expander with bipartition $U \cap A, U \cap B$.
			The process of refining the partition must eventually stop because each $U \in \mathcal{U}$ cannot be much smaller than~$d$ (the degree of $G$) since not many edges enter of leave~$U$.
			This essentially shows \ref{itm:structure1}, \ref{itm:structure3}, \ref{itm:structure5} and \ref{itm:structure6}, while \ref{itm:structure2} and \ref{itm:structure4} are obtained by making slight adjustments to the final partition.}
	\end{remark}

	We now define robust expansion for digraphs. 
	Let $0 < \nu \le \tau < 1$ and 
	let $G$ be a digraph on $n$ vertices. 
	For $S \subseteq V(G)$, the \emph{$\nu$-robust outneighbourhood} $RN^+_{\nu,G}(S)$ is the set of all those vertices with at least $\nu n$ inneighbours in~$S$. 
	We say that $G$ is a \textit{robust $(\nu, \tau)$-outexpander} if every $S \subseteq V(G)$ with $\tau n \le |S| \le (1 - \tau) n$ satisfies $|RN^+_{\nu,G} (S)| \ge |S| + \nu n$.

	\begin{proposition}
		\label{pr:contractbipartite}
		Let $0 < 1/m \ll  \nu \le \tau \ll 1$ with $\nu \le 1/2$. 
		Suppose $G$ is a bipartite graph with bipartition~$A,B$, where $A=\{a_1, \ldots, a_m\}$ and $ B=\{b_1, \ldots, b_m\}$. If $G$ is a bipartite robust $(\nu, \tau)$-expander with bipartition~$A,B$,
		then the digraph $H$ with vertex set $A$ and edge set $E(H) = \{a_ia_j: a_ib_j \in E(G), i \not= j\}$ is a robust $(\nu/2, 2\tau)$-outexpander. 
	\end{proposition}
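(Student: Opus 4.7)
The plan is to transfer the bipartite robust expansion of $G$ across the natural indexing bijection $a_j \leftrightarrow b_j$ into robust out-expansion of $H$. I would fix an arbitrary $S \subseteq V(H) = A$ with $2\tau m \leq |S| \leq (1-2\tau)m$ and aim to show $|RN^+_{\nu/2,H}(S)| \geq |S| + (\nu/2)m$. Since $\tau < 1/2$ and $|A| = m$, this $S$ also satisfies $\tau |A| \leq |S| \leq (1-\tau)|A|$, so the bipartite robust $(\nu,\tau)$-expansion of $G$ (with $|V(G)| = 2m$) immediately gives
\[
|RN_{\nu,G}(S) \cap B| \geq |S| + 2\nu m.
\]

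Next I would pull this robust neighbourhood back to $V(H) = A$. By definition, each $b_j \in RN_{\nu,G}(S) \cap B$ has $|N_G(b_j) \cap S| \geq 2\nu m$, i.e.\ there are at least $2\nu m$ indices $i$ with $a_i \in S$ and $a_i b_j \in E(G)$. The construction of $H$ converts every such edge with $i \neq j$ into a directed edge $a_i a_j$ of $H$, making $a_i$ an in-neighbour of $a_j$ in $H$. Excluding the at most one forbidden index $i = j$ costs at most one in-neighbour, leaving $|N^-_H(a_j) \cap S| \geq 2\nu m - 1 \geq (\nu/2)m$, where the last inequality uses $1/m \ll \nu$. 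Hence $a_j \in RN^+_{\nu/2,H}(S)$, and since the map $b_j \mapsto a_j$ is injective,
\[
|RN^+_{\nu/2,H}(S)| \geq |RN_{\nu,G}(S) \cap B| \geq |S| + 2\nu m \geq |S| + (\nu/2)m,
\]
which is the required robust $(\nu/2, 2\tau)$-outexpansion.

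There is no real obstacle here; the only bookkeeping is the factor of two between $|V(G)| = 2m$ and $|V(H)| = m$, which is exactly why the expansion parameter halves and the scale parameter doubles in passing from $G$ to $H$. The comfortable slack between $2\nu m$ and $(\nu/2)m$ absorbs the at most one in-neighbour lost from excluding the diagonal index $i = j$.
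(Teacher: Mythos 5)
Your argument is correct and is exactly the elaboration the paper has in mind: the paper only remarks that the proposition follows immediately from the definitions, with the crude replacement of $(\nu,\tau)$ by $(\nu/2,2\tau)$ accounting for the potential loss of diagonal edges $a_ib_i$, which is precisely the bookkeeping you carry out. The factor-of-two rescaling between $|V(G)|=2m$ and $|V(H)|=m$ and the single lost index $i=j$ are handled correctly, with the slack $2\nu m - 1 \ge (\nu/2)m$ absorbed by $1/m \ll \nu$.
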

	The proposition above follows immediately from the definitions; we crudely replace $(\nu, \tau)$ with $(\nu/2, 2\tau)$ to account for the loss of any edges of the form $a_ib_i$.

	We now state and prove a structure lemma for regular digraphs that is derived from Theorem~\ref{thm:structure}. This will be one of the key ingredients in the proof of Theorem~\ref{thm:main}. It says that any dense regular digraph~$G$ has two vertex partitions $V(G)=V_{1*} \cup \cdots \cup V_{k*}$ and $V(G)=V_{*1} \cup \cdots \cup V_{*k}$ with $k$ relatively small such that for each~$i$, the (undirected) bipartite graph $G[V_{i*}, V_{*i}]$ is a bipartite robust expander (with bipartition $V_{i*}, V_{*i}$). 
	Various other degree and size conditions relating to the partition are also given.
	Note that in the theorem below, we actually give a partition $\{V_{ij}: i,j \in [k]\}$ and recall that for each $i\in [k]$, we write $V_{i*} := V_{i1} \cup \cdots \cup  V_{ik}$ and $V_{*i} := V_{1i} \cup \cdots \cup V_{ki}$. 
	Thus $V_{ij} = V_{i*} \cap V_{*j}$.
	
	\begin{theorem} \label{thm:digraphstructure}
		For all $\alpha, \tau > 0$ and every non-decreasing function $f : (0, 1) \rightarrow (0, 1)$, there exists $n_0$ such that the following holds.
		For all $d$-regular digraphs~$G$ on $n \ge n_0$ vertices with $d \ge \alpha n$, there exist $\rho, \nu$ with 
		\begin{align*}
			1/n_0 \le \rho \le \nu \le \tau; \qquad \rho \le f(\nu); \qquad \text{and} \qquad 1/n_0 \le f(\rho)
		\end{align*}
		such that there is a partition~$\mathcal{P} = \{ V_{ij}: i,j \in [k]\}$ of~$V(G)$ satisfying, for all $i,j \in [k]$,
		\begin{enumerate}[label={\rm(\roman*)}]
			\item \label{itm:di1}$G[V_{i*},V_{*i}]$ is a bipartite robust $(\nu, \tau)$-expander with bipartition  $V_{i*},V_{*i}$;
			\item \label{itm:di2} for all $x \in V_{ij}$ and $i',j' \in [k]$, $d^+(x, V_{*i}) \ge d^+(x, V_{*i'})$ and $d^-(x, V_{j*}) \ge d^-(x, V_{j'*})$, so in particular, $\delta( G [ V_{i*},V_{*i} ] ) \ge d/k$;
			\item \label{itm:di3} $\left| |V_{i*}| - |V_{*i}| \right| \le 2\rho n$;
			\item \label{itm:di4} all but at most $2\rho n $ vertices $x \in V_{ij}$ satisfy $d^+(x, V_{*i}), d^-(x, V_{j*})  \ge d - 2\rho n $;
			\item \label{itm:di5} $k \le   n / (d-2\rho n)$.
		\end{enumerate}
	\end{theorem}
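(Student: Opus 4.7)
The plan is to reduce Theorem~\ref{thm:digraphstructure} to Theorem~\ref{thm:structure} by encoding the $d$-regular digraph $G$ on $n$ vertices as a $d$-regular bipartite undirected graph $G'$ on $2n$ vertices. Explicitly, I would take two disjoint copies $A = \{a_v : v \in V(G)\}$ and $B = \{b_v : v \in V(G)\}$ of $V(G)$, and declare $a_u b_v \in E(G')$ iff $uv \in E(G)$. Since every vertex of $G$ has $d$ out- and $d$-inneighbours, $G'$ is $d$-regular with bipartition $A,B$ of sizes $|A|=|B|=n$, which satisfies the hypothesis of Theorem~\ref{thm:structure} with the same $\alpha$.

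\textbf{Translating the partition.} I would apply Theorem~\ref{thm:structure} to $G'$ with the given $\tau$ but with the non-decreasing function $f' := f/2$ in place of $f$. This produces $\rho', \nu$ with $1/n_0 \le \rho' \le \nu \le \tau$, $\rho' \le f(\nu)/2$ and $1/n_0 \le f(\rho')/2$, together with a partition $A_1, \dots, A_k, B_1, \dots, B_k$ satisfying \ref{itm:structure1}--\ref{itm:structure6}. Set $\rho := 2\rho'$, so $\rho \le f(\nu)$ and $1/n_0 \le f(\rho')\le f(\rho)$ (using monotonicity of $f$). Now define $V_{i*} := \{v \in V(G) : a_v \in A_i\}$ and $V_{*j} := \{v \in V(G) : b_v \in B_j\}$, and let $V_{ij} := V_{i*} \cap V_{*j}$. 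Since $\{A_i\}$ partitions $A$ and $\{B_j\}$ partitions $B$, $\{V_{ij} : i,j \in [k]\}$ is a partition of $V(G)$ with $V_{i*}$ and $V_{*i}$ defined consistently with the notation of the theorem.

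\textbf{Checking the properties.} Properties \ref{itm:di1}--\ref{itm:di3} and \ref{itm:di5} translate essentially directly: edges of $G'[A_i \cup B_i]$ correspond exactly to directed edges of $G$ from $V_{i*}$ to $V_{*i}$, so $G[V_{i*},V_{*i}]$ inherits from \ref{itm:structure1} the bipartite robust $(\nu,\tau)$-expansion property of $G'[A_i\cup B_i]$. For \ref{itm:di2}, if $x \in V_{ij}$ then $a_x \in A_i$ has neighbours only in $B$, so \ref{itm:structure2} for $a_x$ gives $d^+(x,V_{*i}) = d(a_x,B_i) \ge d(a_x,B_{i'}) = d^+(x,V_{*i'})$ for all $i'$, and similarly for $b_x \in B_j$ one obtains $d^-(x,V_{j*}) \ge d^-(x,V_{j'*})$; the minimum-semidegree consequence then follows by averaging over the $k$ parts. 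Property \ref{itm:di3} is \ref{itm:structure3} verbatim with bound $2\rho' n \le 2\rho n$, and \ref{itm:di5} follows from \ref{itm:structure5} since $d - 2\rho' n > d - 2\rho n$ implies $k \le n/(d-2\rho' n) \le n/(d-2\rho n)$.

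\textbf{The main subtlety.} The one place I expect to have to spend real care is property \ref{itm:di4}, because a vertex $x \in V_{ij}$ can fail the degree condition via either $a_x \in A_i$ or $b_x \in B_j$. Property \ref{itm:structure4} applied to $A_i \cup B_i$ gives at most $2\rho' n$ vertices $v \in V_{i*}$ with $d^+(v,V_{*i}) < d-2\rho' n$, and similarly for the second condition via $B_j$, so in total at most $4\rho' n = 2\rho n$ vertices of $V_{ij}$ are bad. The bound $d - 2\rho' n \ge d - 2\rho n$ then yields exactly \ref{itm:di4}. This is precisely the reason for replacing $f$ by $f/2$ at the outset: it halves $\rho'$ relative to the naive application and lets the extra factor of two from combining the two failure modes be absorbed into the final $\rho$, without damaging any of the other conditions.
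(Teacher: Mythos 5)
Your approach is the same as the paper's: encode the digraph $G$ as the bipartite graph $G'$ (the paper calls it $H_G$), apply Theorem~\ref{thm:structure}, and read off the partition via $V_{ij} := V_{i*} \cap V_{*j}$. Your translations of \ref{itm:di1}, \ref{itm:di2}, \ref{itm:di3} and \ref{itm:di5} are correct and match the paper. Your treatment of \ref{itm:di4} is actually more careful than the paper's: the paper asserts that \ref{itm:structure4} implies \ref{itm:di4} directly via the correspondence, but as you observe, a vertex $x \in V_{ij}$ can be bad either because $a_x$ is among the $\le 2\rho n$ exceptional vertices of $A_i \cup B_i$ or because $b_x$ is among the $\le 2\rho n$ exceptional vertices of $A_j \cup B_j$, so the union bound gives $4\rho n$ exceptions rather than $2\rho n$. (This slip in the paper is harmless: the constant is always absorbed into the later auxiliary parameter $\gamma$.) Your idea of rescaling the input function to absorb this factor of two is the right instinct.

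However, your specific rescaling has a gap. You apply Theorem~\ref{thm:structure} with $f' := f/2$, obtaining $\rho' \le \nu$ and $\rho' \le f(\nu)/2$, and then set $\rho := 2\rho'$. This gives $\rho \le f(\nu)$ and, by monotonicity, $1/n_0 \le f(\rho)$, but you never verify $\rho \le \nu$. Theorem~\ref{thm:structure} only guarantees $\rho' \le \nu$, which does not yield $2\rho' \le \nu$, and since $f$ is an arbitrary non-decreasing function on $(0,1)$ you cannot assume $f(\nu)\le\nu$ to rescue this. The fix is to use $f'(x) := \min\{f(x)/2,\, x/2\}$ instead of $f/2$: this is still a valid non-decreasing function $(0,1)\to(0,1)$, and now $\rho' \le f'(\nu) \le \nu/2$ forces $\rho = 2\rho' \le \nu$ while preserving all the other inequalities you derived. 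Alternatively, you could simply state \ref{itm:di4} with $4\rho n$ (which is what the argument genuinely gives without rescaling) and observe that the larger constant is harmless in the proof of Theorem~\ref{thm:main}.
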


	\begin{proof}
		We simply apply Theorem~\ref{thm:structure} to the obvious bipartite graph obtained from the directed graph $G$, with the natural correspondence in parameters, as follows. 
		
		For any $d$-regular digraph $G$ on $n$ vertices, let $H_G$ be the $d$-regular bipartite undirected graph on $2n$ vertices defined as follows.
		Let $V(H_G) = A \cup B$, where $A$ and $B$ are disjoint copies of~$V(G)$ and for $a \in A$ and $b \in B$ let $ab \in E(H_G)$ if and only if $ab \in E(G)$.
		Now applying Theorem~\ref{thm:structure} with $H_G$ playing the role of~$G$, we obtain the following statement.

		For all $\alpha, \tau > 0$ and every non-decreasing function $f: (0, 1) \rightarrow (0, 1)$, there exists $n_0$ such that the following holds.
		For all $d$-regular digraphs~$G$ on $n \geq n_0$ vertices and $d \ge \alpha n$, taking $H_G$ to be the corresponding bipartite $d$-regular graph on $2n \geq n_0$ vertices there exist $\rho, \nu$ with
		\begin{align*}
			1/n_0 \le \rho \le \nu \le \tau; \qquad \rho \le f(\nu) ;\qquad \text{and} \qquad 1/n_0 \le f(\rho)
		\end{align*}
		such that there is a partition of $V(H_G)$ into sets $A_1,\dots, A_{k}, B_1, \dots, B_{k}$ with the following properties for all $i \in [k]$:
		\begin{enumerate}[label={\rm (\roman*$'$)}]
			\item \label{itm:1'} $H_G[A_i \cup B_i]$ is a bipartite robust $(\nu, \tau)$-expander with bipartition $A_i,B_i$;
			\item \label{itm:2'} for all $x \in A_i \cup B_i$ and $j \in [k]$, $d_{H_G}(x, A_i \cup B_i) \ge d_{H_G}(x, A_j \cup B_j) $, so in particular, $\delta( H_G [ A_i  \cup B_i ] ) \ge d/k$;
			\item \label{itm:3'} $\left| |A_i| - |B_i| \right| \le 2 \rho n$;
			\item \label{itm:4'} all but at most $2 \rho n $ vertices $x \in A_i \cup B_i$ satisfy $d_{H_G}(x, A_i \cup B_i) \ge d - 2 \rho n $;
			\item \label{itm:5'} $k \le   n / (d-2 \rho n)$;
			\item  \label{itm:6'}     $A_1, \ldots, A_k$ is a partition of $A$ and $B_1, \ldots, B_k$ is a partition of $B$.
		\end{enumerate}
		For $i,j \in [k]$, define $V_{ij} = A_i \cap B_j$ (where we think of $A_i$ and $B_j$ as sets of vertices of the digraph~$G$).  First note that, by~\ref{itm:6'},  $\mathcal{P} = \{V_{ij} : i,j \in [k] \}$ is a partition of $V(G)$, and $A_i = V_{i*}$ and $B_i = V_{*i}$.
		Now the natural correspondence between $H_G$ and $G$ means that \ref{itm:1'}--\ref{itm:5'} imply~\ref{itm:di1}--\ref{itm:di5}, respectively.
	\end{proof}

	%%%%%%%%%%%%%%%%%%%%%%%%%%%%%%%%%%%%%%%%%%%%%%%%%%%%%%%%%%%

	\section{Finding long cycles} \label{sec:longcycles}
	
	Theorem~\ref{thm:digraphstructure} from the previous section shows that every (dense) regular digraph has a vertex partition with some useful properties. In this section we show how properties \ref{itm:di1} and \ref{itm:di2} from Theorem~\ref{thm:digraphstructure} together with a simple balancing condition on the partition allows us to construct few long cycles that can cover all the vertices of several parts in the partition.
	
	Let $G$ be a digraph on~$V$ and
	let $\mathcal{P} =\{V_{ij}:i,j \in [k]\}$ be a partition of~$V$ where we allow some parts to be empty. 
	We say that $\mathcal{P}$ is \emph{balanced} if $|V_{i*}| = |V_{*i}|$ for all $i \in [k]$. For $i, j \in [k]$, let $G_{ij}$ be the subdigraph of~$G$ on $V_{i*} \cup V_{*j}$ with edges from~$V_{i*}$ to~$V_{*j}$, that is,  $E(G_{ij}) = E_G(V_{i*},V_{*j}) = E(G) \cap (V_{i*} \times V_{*j})$.

	Define $S(\mathcal{P})$ to be the graph (without loops) on $[k]$ such that for all $i,j\in[k]$ with $i\neq j$, $ij \in E(S(\mathcal{P}))$ if and only if $V_{ij} \cup V_{ji} \ne \emptyset$ . The connected components of $S(\mathcal{P})$ will determine which parts of $\mathcal{P}$ can be covered by one long cycle. We remark that, by definition,  $\bigcup_{i\in I}(V_{i*}\cup V_{*i})$ and $\bigcup_{j\in J}(V_{j*}\cup V_{*j})$ are disjoint for two distinct connected components $I$ and $J$ of $S(\mathcal{P})$.
	The aim of this section is to prove the following lemma.
	
	\begin{lemma} \label{lma:longcycle2}
		Let $1/m \ll \nu \leq \tau \ll \alpha <1$.
		Let $G$ be a digraph with a balanced vertex partition~$\mathcal{P} = \{V_{ij} : i,j \in [k]\}$.
		Let $I$ be a connected component in~$S(\mathcal{P})$.
		Suppose that, for all~$i \in I$, $G[V_{i*},V_{*i}]$ is a bipartite robust $(\nu, \tau)$-expander with bipartition $V_{i*},V_{*i}$ such that $|V_{i*}| \ge m$ and $\delta( G[V_{i*},V_{*i}] ) \ge \alpha |V_{i*}|$. 
		Then there exists a cycle~$C$ in~$G$ with $V(C) = \bigcup_{i \in I} \left( V_{i*} \cup V_{*i} \right)$. 
	\end{lemma}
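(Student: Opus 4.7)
The plan is to construct a $1$-factor of $G[U_I]$ by combining local directed perfect matchings from each $G[V_{i*}, V_{*i}]$, and then modify it into a single directed cycle using robust expansion together with the connectivity of $S(\mathcal{P})$; here $U_I := \bigcup_{i \in I}(V_{i*} \cup V_{*i})$.

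For each $i \in I$, the balancedness $|V_{i*}|=|V_{*i}|$ lets me fix a bijection $\pi_i\colon V_{i*} \to V_{*i}$ that is the identity on $V_{ii}$. Proposition~\ref{pr:contractbipartite} then turns $G[V_{i*}, V_{*i}]$ into a robust $(\nu/2, 2\tau)$-outexpander $H_i$ on $V_{i*}$ with $uv \in E(H_i)$ iff $u\to\pi_i(v) \in E(G)$; the hypothesis $\delta(G[V_{i*},V_{*i}]) \ge \alpha |V_{i*}|$ gives $H_i$ a linear minimum semi-degree. By the standard Hamiltonicity theorem for robust outexpanders (referenced as Theorem~\ref{thm:RobustExpanderImpliesHamilton}), each $H_i$ has a Hamilton cycle, which lifts via $\pi_i$ to a set $M_i \subseteq E(G[V_{i*}, V_{*i}])$ of $|V_{i*}|$ directed edges in which every vertex of $V_{i*}$ is the tail of exactly one edge and every vertex of $V_{*i}$ is the head of exactly one edge. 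Because $I$ is a connected component of $S(\mathcal{P})$, $V_{ij} = \emptyset$ whenever $\{i,j\}\not\subseteq I$; hence every $v \in V_{ij}$ with $i,j\in I$ contributes exactly one outgoing edge (in $M_i$) and one incoming edge (in $M_j$) to $M := \bigcup_{i\in I} M_i$. Thus $M$ is a $1$-factor of $G[U_I]$ and decomposes into $r\ge 1$ vertex-disjoint directed cycles.

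If $r \ge 2$, I would reduce $r$ by swaps: replacing two edges $u\to u',\,v\to v' \in M_i$ lying in distinct cycles of $M$ with the edges $u\to v'$ and $v\to u'$ (also in $G[V_{i*}, V_{*i}]$) preserves the $1$-factor structure while merging the two cycles into one. Whenever two cycles of $M$ share some $V_{i*}$, a short counting argument inside the bipartite robust expander $G[V_{i*},V_{*i}]$ furnishes such a swap; if they share no $V_{i*}$, connectivity of $S(\mathcal{P})$ produces an intermediate cycle bridging them via a vertex in some $V_{ij}\cup V_{ji}$ with $ij\in E(S(\mathcal{P}))$, so two successive merges complete the reduction. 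Iterating drives $r$ to $1$, yielding the desired cycle $C$ with $V(C)=U_I$. The main obstacle is precisely this merging step — guaranteeing that a valid swap exists at every stage; a cleaner fallback, which I would adopt if the swap bookkeeping becomes intricate, is to realise the cycle in one shot by choosing each $C_i$ so that its image under $\pi_i$ follows an Eulerian circuit of the auxiliary multigraph $T$ on $I$ having $|V_{i'i}|$ copies of the edge $i\to i'$ — such a circuit exists because $T$ is strongly connected (by connectivity of $S(\mathcal{P})$ together with weak connectivity plus balance) and Eulerian (by $|V_{i*}|=|V_{*i}|$), and robust expansion inside each $H_i$ provides the flexibility needed to prescribe the cyclic order of successors.
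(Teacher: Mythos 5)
Your setup is sound: using balancedness to fix bijections $\pi_i$, applying Proposition~\ref{pr:contractbipartite} and Theorem~\ref{thm:RobustExpanderImpliesHamilton} to get Hamilton cycles of the contracted digraphs $H_i$, and lifting these to obtain a $1$-factor $M$ of $G[U_I]$ all works. But both of your proposed mechanisms for merging the cycles of $M$ into one have genuine gaps. For the swap-merging step, the assertion that "a short counting argument inside the bipartite robust expander $G[V_{i*},V_{*i}]$ furnishes such a swap" whenever two cycles share $V_{i*}$ is not justified: a swap needs two \emph{specific} edges $u\to u',\,v\to v'\in M_i$ in distinct cycles with $uv',vu'\in E(G[V_{i*},V_{*i}])$. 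Robust expansion only controls sets of size at least $\tau|V_{i*}|$, so it says nothing when one of the two cycles meets $V_{i*}$ in few vertices (and such short cycles can arise, e.g.\ from vertices of $V_{ij}\cup V_{ji}$ with $i\neq j$ being consecutive in the lifted Hamilton cycles). The minimum degree $\alpha|V_{i*}|$ alone is also insufficient when $\alpha<1/2$, since the out-neighbourhood of one endpoint and the in-neighbourhood of the other need not meet the consecutive pairs of the other cycle. For the Eulerian-circuit fallback, the claim that "robust expansion inside each $H_i$ provides the flexibility needed to prescribe the cyclic order of successors" is false: Theorem~\ref{thm:RobustExpanderImpliesHamilton} gives you \emph{some} Hamilton cycle of $H_i$, but no control over the cyclic order in which it visits the subset $V_{i*}\setminus V_{ii}$. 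Prescribing that order is a much stronger linking property which neither the cited theorem nor any lemma in the paper supplies.

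The paper avoids both issues by a different and cleaner route. It orders $I=[\ell]$ by a reverse BFS so that each $j<\ell$ has a later neighbour in $S(\mathcal{P})$, and builds the cycle incrementally: at step $i$, it does not try to control the Hamilton cycle of $H_i$ at all, but instead chooses the \emph{bijection} $\phi\colon V_{i*}\setminus V_{ii}\to V_{*i}\setminus V_{ii}$ (the input to Lemma~\ref{lma:longcycle1}) so that any path in $\mathcal{Q}_{i-1}$ from $v_+\in V_{*i}\setminus V_{ii}$ to $v_-\in V_{i*}\setminus V_{ii}$ satisfies $\phi(v_-)=v_+$. With $\phi$ thus "threaded" through the existing path system, \emph{any} Hamilton cycle of $H_i$ produces paths that plug into the old ones in exactly the right way, and the reverse-BFS ordering guarantees that nothing closes up prematurely ($W_i^+\neq\emptyset$ for $i<\ell$). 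Everything then closes into one cycle at the final step $i=\ell$. This sidesteps entirely the need to merge cycles or to prescribe cyclic orders, which is where your argument breaks down.
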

	
	The lemma above will be used as follows.  Suppose we have a dense $d$-regular digraph $G$ with a vertex partition $\mathcal{P}$ such as that given by Theorem~\ref{thm:digraphstructure} but with the additional property that $\mathcal{P}$ is balanced. Then Lemma~\ref{lma:longcycle2} applied to each connected component of $S(\mathcal{P})$ gives us a collection of $s$ vertex-disjoint cycles that cover~$V(G)$, where $s$ is the number of connected components in~$S(\mathcal{P})$. 
	So in later sections we will be interested in obtaining balanced partitions~$\mathcal{P}$ where the number of connected components of $S(\mathcal{P})$ is ``small''.
	
	We need the following theorem, which states that a robust outexpander with linear minimum degree contains a (directed) Hamilton cycle. 
	
	\begin{theorem}[\cite{RobustExpanderHamilton}; see also~\cite{LoPatel}]
		\label{thm:RobustExpanderImpliesHamilton}
		Let $1/n \ll \nu\leq \tau\ll\gamma<1$.
		Let $G$ be a robust $(\nu,\tau)$-outexpander on $n$ vertices with $\delta^{0}(G)\geq \gamma n$.
		Then $G$ contains a Hamilton cycle.
	\end{theorem}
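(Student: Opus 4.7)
The plan is to split into two cases: $|I|=1$, which reduces directly to Proposition~\ref{pr:contractbipartite} followed by Theorem~\ref{thm:RobustExpanderImpliesHamilton}, and $|I|\ge 2$, which requires building a single cycle that weaves through all the blocks using the bridge vertices in $V_{ij}$ with $i\ne j$ in $I$. For $I=\{i\}$ a singleton, maximality of the connected component forces $V_{ij}=V_{ji}=\emptyset$ for every $j\ne i$ (otherwise $ij\in E(S(\mathcal{P}))$), so $V_{i*}=V_{*i}=V_{ii}=U$. Applying Proposition~\ref{pr:contractbipartite} to $G[V_{i*},V_{*i}]$ with the identity bijection $V_{ii}\to V_{ii}$ yields a robust $(\nu/2,2\tau)$-outexpander $D$ on $V_{ii}$ whose edges are the loopless edges of $G[V_{ii}]$, with minimum semi-degree at least $\alpha|V_{ii}|-1$, so Theorem~\ref{thm:RobustExpanderImpliesHamilton} produces a Hamilton cycle of $D$, which is the desired cycle $C$.

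For $|I|\ge 2$, write $n:=|U|$ and define an auxiliary multi-digraph $\Gamma$ on vertex set $I$ by placing, for each $v\in V_{ij}$ with $i,j\in I$, one directed edge $j\to i$ in $\Gamma$; in particular, vertices of $V_{ii}$ contribute self-loops. Then $\Gamma$ has exactly $n$ edges; $d^-_\Gamma(i)=|V_{i*}|=|V_{*i}|=d^+_\Gamma(i)$ for every $i\in I$ by the balanced-partition hypothesis; and $\Gamma$ is connected (even ignoring self-loops) because its underlying simple graph equals $S(\mathcal{P})[I]$. Hence $\Gamma$ admits an Eulerian circuit $e_1,\ldots,e_n,e_1$ with $e_t:j_t\to i_t$ and $i_t=j_{t+1}$ (indices mod $n$); the multiset $\{(i_t,j_t)\}_{t}$ realises each label $(i,j)\in I\times I$ with multiplicity $|V_{ij}|$. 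The desired cycle will have the form $v_1\to v_2\to\cdots\to v_n\to v_1$ with $v_t\in V_{i_tj_t}$, and because $v_t\in V_{i_t*}$ and $v_{t+1}\in V_{*j_{t+1}}=V_{*i_t}$, each required edge $v_tv_{t+1}$ automatically lives in the bipartite robust expander $G[V_{i_t*},V_{*i_t}]$.

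What remains is the vertex-assignment step, which I expect to be the main obstacle. For each $i\in I$ independently, we must find a perfect matching $M_i$ in $G[V_{i*},V_{*i}]$ such that, for each pair $(j,i')\in I\times I$, exactly $c^{(i)}_{j,i'}:=|\{t:(i_t,j_t)=(i,j)\text{ and }i_{t+1}=i'\}|$ edges of $M_i$ go from $V_{ij}$ to $V_{i'i}$ (the counts $c^{(i)}_{j,i'}$ being determined by the Eulerian circuit and automatically consistent with the sizes $|V_{ij}|$ and $|V_{i'i}|$, since $\sum_{i'}c^{(i)}_{j,i'}=|V_{ij}|$ and $\sum_{j}c^{(i)}_{j,i'}=|V_{i'i}|$). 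I would approach this constrained-matching problem by a Hall-type argument applied to each sub-bipartite graph $G[V_{ij},V_{i'i}]$, using the minimum-degree bound $\alpha|V_{i*}|$ to handle sets of extremal size and the bipartite robust expansion (absorbing small deletions via Lemma~\ref{lma:birobexpdifferent}) to handle sets of intermediate size. Alternatively, one may first re-choose the Eulerian circuit so that the constraint counts $c^{(i)}_{j,i'}$ are distributed sufficiently uniformly that an ``almost-uniform'' matching in each label class suffices, with any leftover edges filled in greedily using robust expansion.
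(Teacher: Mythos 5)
Your proposal does not address the stated theorem at all. Theorem~\ref{thm:RobustExpanderImpliesHamilton} asserts that \emph{any} robust $(\nu,\tau)$-outexpander with linear minimum semi-degree is Hamiltonian; it is an external result of K\"uhn, Osthus and Treglown, quoted in this paper without proof, and proving it requires a substantial argument about robust outexpanders themselves (the original proof builds a Hamilton cycle via expansion/connectivity properties of the outexpander, and has no connection to any partition $\mathcal{P}$, component $I$ of $S(\mathcal{P})$, or balancedness hypothesis --- none of which appear in the statement). What you have written is instead an attempted proof of Lemma~\ref{lma:longcycle2} (covering $\bigcup_{i\in I}(V_{i*}\cup V_{*i})$ by a single cycle), and crucially it \emph{invokes} Theorem~\ref{thm:RobustExpanderImpliesHamilton} as a black box in the $|I|=1$ case. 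As a proof of the theorem in question this is circular; as written it simply proves a different statement.

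Even judged as an argument for Lemma~\ref{lma:longcycle2}, the plan has a genuine gap at exactly the point you flag. After fixing an Eulerian circuit in the auxiliary multigraph and choosing, for each $i\in I$, a perfect matching $M_i$ in $G[V_{i*},V_{*i}]$ with the prescribed counts $c^{(i)}_{j,i'}$ between the label classes, the union of these matchings is only a $1$-factor of the relevant vertex set: every vertex gets in- and outdegree one, but nothing forces the result to be a \emph{single} cycle rather than many short ones, since the matchings only see the multiplicities coming from the circuit, not its cyclic order. This connectivity issue is precisely what the paper's actual argument for Lemma~\ref{lma:longcycle2} is designed to handle: it contracts $G[V_{i*},V_{*i}]$ to a robust outexpander (Proposition~\ref{pr:contractbipartite}), applies Theorem~\ref{thm:RobustExpanderImpliesHamilton} to get a Hamilton cycle of the contracted digraph (which is where single-cycle connectivity is obtained for free), and then stitches the resulting path systems together component by component with a carefully chosen bijection $\phi$. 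The constrained Hall-type matching step you defer is also nontrivial, but even granting it, the argument would not yield one cycle. If your goal is the cited theorem itself, you would need an entirely different proof, essentially reproducing the argument of the reference.
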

	
	The next lemma shows that, under the conditions of Lemma~\ref{lma:longcycle2}, $V_{i*} \cup V_{*i}$ can be covered either by vertex-disjoint paths from $V_{i*} \setminus V_{ii}$ to $V_{*i} \setminus V_{ii}$ or by a cycle, and it is used inductively to prove Lemma~\ref{lma:longcycle2}.

	\begin{lemma} \label{lma:longcycle1}
		
		Let $1/m \ll \nu \leq \tau \ll \alpha <1$.
		Let $G$ be a digraph with a vertex partition~$\mathcal{P} = \{V_{ij} : i,j \in [k]\}$.
		Let $i \in [k]$.
		Suppose that $G[V_{i*},V_{*i}]$ is a bipartite robust $(\nu, \tau)$-expander with bipartition $V_{i*},V_{*i}$ such that $|V_{i*}| = |V_{*i}| = m $ and $\delta( G[V_{i*},V_{*i}] ) \ge \alpha m$.
		Let $\phi: V_{i*} \setminus V_{ii} \rightarrow  V_{*i} \setminus V_{ii}$ be a bijection.
		Then there exists a path system~$\mathcal{Q}$ in~$G_{ii}$ such that $\mathcal{Q} \cup \{\phi(v)v : v \in V_{i*} \setminus V_{ii} \}$ is a cycle with vertex set~$V_{i*} \cup V_{*i}$. 
		In the special case that $V_{i*} = V_{*i} = V_{ii}$, we have that $\phi$ is an empty function and $\mathcal{Q}$ is a cycle (rather than a path system) with vertex set $V_{ii}$.
	\end{lemma}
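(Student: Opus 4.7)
The plan is to reduce the statement to a Hamilton cycle problem in an auxiliary digraph on $m$ vertices, obtained from $G_{ii}$ by identifying, for each $v \in V_{i*} \setminus V_{ii}$, the pair $\{v, \phi(v)\}$, and identifying each $x \in V_{ii}$ with its copy on the other side of the bipartition. Concretely, define the bijection $\psi : V_{i*} \to V_{*i}$ by $\psi(x) = x$ for $x \in V_{ii}$ and $\psi(v) = \phi(v)$ for $v \in V_{i*} \setminus V_{ii}$, and let $H$ be the digraph on vertex set $V_{i*}$ with edge set $\{uu' : u \ne u',\ u\psi(u') \in E(G_{ii})\}$. Enumerating $V_{i*} = \{a_1, \dots, a_m\}$ and setting $b_k := \psi(a_k)$ casts $H$ in exactly the form of Proposition~\ref{pr:contractbipartite}, so $H$ is a robust $(\nu/2, 2\tau)$-outexpander on $m$ vertices. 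The hypothesis $\delta(G[V_{i*}, V_{*i}]) \ge \alpha m$ gives $\delta^0(H) \ge \alpha m - 1$ (the $-1$ accounting for the edge $u\psi(u)$ potentially excluded at each $u$), and since $1/m \ll \nu/2 \le 2\tau \ll \alpha/2$, Theorem~\ref{thm:RobustExpanderImpliesHamilton} supplies a Hamilton cycle $h_1 h_2 \cdots h_m h_1$ in $H$.

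To lift this Hamilton cycle back to $G$, I would set $\mathcal{Q} := \{h_j \psi(h_{j+1}) : j \in [m]\}$ (indices mod $m$); each such edge lies in $E(G_{ii})$ by definition of $H$. Walking around the Hamilton cycle, at step $j$ one crosses the $\mathcal{Q}$-edge from $h_j$ to $\psi(h_{j+1})$. If $h_{j+1} \in V_{ii}$, then $\psi(h_{j+1}) = h_{j+1}$ and the next $\mathcal{Q}$-edge continues from the same vertex; if $h_{j+1} \in V_{i*} \setminus V_{ii}$, one inserts the $\phi$-edge $\phi(h_{j+1}) \to h_{j+1}$ before proceeding. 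This produces the desired cycle $\mathcal{Q} \cup \{\phi(v)v : v \in V_{i*} \setminus V_{ii}\}$, visiting each vertex of $V_{i*} \cup V_{*i}$ exactly once. The inserted $\phi$-edges create precisely $|V_{i*} \setminus V_{ii}|$ gaps in $\mathcal{Q}$, so $\mathcal{Q}$ is a path system with startpoints in $V_{i*} \setminus V_{ii}$ and endpoints in $V_{*i} \setminus V_{ii}$; in the degenerate case $V_{ii} = V_{i*} = V_{*i}$ there are no gaps and $\mathcal{Q}$ is itself a cycle on $V_{ii}$.

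There is no serious obstacle here: the heavy lifting is packaged inside Proposition~\ref{pr:contractbipartite} (turning the bipartite robust expander $G[V_{i*}, V_{*i}]$ into a robust outexpander) and Theorem~\ref{thm:RobustExpanderImpliesHamilton} (Hamiltonicity of dense robust outexpanders). The only care required is the bookkeeping of the identification $\psi$ so that $V_{ii}$-vertices act as ``pass-through'' vertices in $H$ while $V_{i*} \setminus V_{ii}$-vertices encode the prescribed $\phi$-edges, and the verification that the minimum semidegree of $H$ survives the loss of at most one edge per vertex from the no-loop condition in Proposition~\ref{pr:contractbipartite}.
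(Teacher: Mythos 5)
Your proposal is correct and takes essentially the same approach as the paper: both identify $v$ with $\phi(v)$ (and the two copies of each $V_{ii}$-vertex) to build a digraph $H$ on $V_{i*}$, invoke Proposition~\ref{pr:contractbipartite} and Theorem~\ref{thm:RobustExpanderImpliesHamilton} to extract a Hamilton cycle in $H$, and then lift it back by re-inserting the $\phi$-edges. The paper phrases the lift as a decomposition of the Hamilton cycle into paths between consecutive vertices of $V_{i*}\setminus V_{ii}$, while you describe it edge-by-edge via $\psi$, but these are the same construction.
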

	
	\begin{proof}
		Let $H$ be the digraph on~$V_{i*}$ obtained from~$G_{ii}$ by identifying~$v$ with~$\phi(v)$ for all $v \in V_{i*} \setminus V_{ii}$ and deleting any loops.
		Note that $\delta^0(H) \ge \delta( G[V_{i*},V_{*i}] )  - 1 \ge \alpha m /2$ and $H$ is a robust $(\nu/2, 2\tau)$-outexpander by Proposition~\ref{pr:contractbipartite} applied to the undirected bipartite graph $G[V_{i*},V_{*i}]$. 
		By Theorem~\ref{thm:RobustExpanderImpliesHamilton}, $H$ has a Hamilton cycle~$C$. 
		Note that if $V_{i*}=V_{ii}=V_{*i}$, then $H$ coincides with $G[V_{ii}]$,  proving the lemma in this case. 
		If not, let $V_{i*}\setminus V_{ii} = \{ x_1, \dots, x_{\ell}\}$ be such that $C$ visits $x_1, \ldots, x_{\ell}$ in that order.
		Thus, $C$ can be decomposed into paths $P_1, \dots, P_{\ell}$ such that $P_j$ is a path from~$x_j$ to~$x_{j+1}$, where we take $x_{\ell+1} = x_1$.
		Let $Q_j$ be obtained from~$P_j$ by replacing~$x_{j+1}$ with~$\phi(x_{j+1})$.
		Note that $Q_j$ is a path in $G_{ii}$ from~$x_j$ to~$\phi(x_{j+1})$. 
		The result follows by setting 
		$\mathcal{Q} = \{Q_1, \ldots, Q_{\ell}\}$.
	\end{proof}

	We now prove Lemma~\ref{lma:longcycle2}. 
	For a connected component $I$ in $S(\mathcal{P})$, 
	the idea is to apply Lemma~\ref{lma:longcycle1} to each $i \in I$. 
	Some care is needed to ensure that the union of path systems forms only one cycle. 
	
	\begin{proof}[Proof of Lemma~\ref{lma:longcycle2}] If $I$ consists of a single vertex say~$i$, then the result follows by Lemma~\ref{lma:longcycle1} (since in that case $V_{i*} = V_{*i} = V_{ii}$). Now assume $|I|\geq 2$, so $V_{i*}\cup V_{*i}\neq V_{ii}$ for all $i\in I$.
		Without loss of generality, let $I = [\ell]$ 
		and order the indices of $I$ such that for each $j \in [\ell-1]$, there is a $j' \in [\ell] \setminus [j]$ with $j j' \in E(S(\mathcal{P}))$. This can be achieved since $I$ is connected, e.g.\ by taking a reverse breadth-first search ordering. Note that $V_{ij}=V_{ji}=\emptyset$ for all $i\in[\ell]$ and $j\notin [\ell]$.

		For $i\in[\ell]$, let $W^+_i=\bigcup_{i'\in [i],\, j'\notin[i]}V_{i'j'} = \bigcup_{i' \in [i]}V_{i'*} \setminus \bigcup_{i',j'\in [i]}V_{i'j'}$ and $W^-_i=\bigcup_{i'\notin [i],\, j'\in[i]}V_{i'j'} = \bigcup_{j' \in [i]}V_{*j'} \setminus \bigcup_{i',j'\in [i]}V_{i'j'}$.
		Since $\mathcal{P}$ is balanced, $|W^+_i| = |W^-_i|$ for each $i \in [\ell]$. 
		Also, by our ordering of the indices, we have that $W_i^+,W_i^-\neq \emptyset$ for $i\in[\ell-1]$ and that $W_{\ell}^+=W_{\ell}^-= \emptyset$.

		Let $\mathcal{Q}_0=W_0^+=W_0^-=\emptyset$ and suppose for some $i \in [\ell]$, we have already found a path system~$\mathcal{Q}_{i-1}$ such that $V(\mathcal{Q}_{i-1}) = \bigcup_{i' \in [i-1]} \left( V_{i'*} \cup V_{*i'} \right)$ and $\mathcal{Q}_{i-1}$ consists of precisely $|W^+_{i-1}|$ paths from~$W^+_{i-1}$ to~$W^-_{i-1}$. 
		We now construct $\mathcal{Q}_{i}$ as follows. 
		Let $\phi: V_{i*} \setminus V_{ii}  \rightarrow  V_{*i} \setminus V_{ii} $ be any bijection such that if there is a path in~$\mathcal{Q}_{i-1}$ from~$v_+ \in V_{*i}\setminus V_{ii}$ to~$v_- \in V_{i*}\setminus V_{ii}$, then $\phi(v_-) = v_+ $.
		Apply Lemma~\ref{lma:longcycle1} and obtain a path system~$\mathcal{Q}_i'$ in~$G_{ii}$ such that $\mathcal{Q}_i' \cup \{\phi(v)v : v \in V_{i*}\setminus V_{ii}\}$ is a cycle with vertex set~$V_{i*} \cup V_{*i}$.
		We set $\mathcal{Q}_{i} = \mathcal{Q}_{i-1} \cup \mathcal{Q}_i'$.

		Suppose $i = \ell$.
		Note that $W^-_{\ell-1} = V_{\ell*} \setminus V_{\ell\ell}$ and $W^+_{\ell-1} = V_{*\ell} \setminus V_{\ell\ell}$.
		Hence $\mathcal{Q}_{\ell-1}$ consists of paths from~$\phi(v) \in W_{\ell-1}^+$ to~$v \in W_{\ell-1}^-$, we have $V(\mathcal{Q}_{\ell-1}) = \bigcup_{i' \in [\ell-1]} \left( V_{i'*} \cup V_{*i'} \right)= \bigcup_{i' \in [\ell]} \left( V_{i'*} \cup V_{*i'} \right) \setminus V_{\ell \ell}$. 
		Since $\mathcal{Q}_{\ell}' \cup \{\phi(v)v : v \in V_{\ell *} \setminus V_{\ell \ell}\}$ is a cycle with vertex set~$V_{\ell *} \cup V_{* \ell}$, we deduce that $\mathcal{Q}_{\ell} = \mathcal{Q}_{\ell}' \cup \mathcal{Q}_{\ell-1}$ is a cycle with vertex set~$\bigcup_{i' \in [\ell]} \left( V_{i'*} \cup V_{*i'} \right)$ as required.

		Suppose $i \in [\ell - 1]$.
		Note that $V(\mathcal{Q}_{i}) = \bigcup_{i' \in [i]} \left( V_{i'*} \cup V_{*i'} \right)$.
		It remains to check that $\mathcal{Q}_i$ is a path system consisting of precisely $|W_i^+|$ paths from $W_i^+$ to $W_i^-$.
		Note first that paths in $\mathcal{Q}'_i \subseteq G_{ii}$ start in~$V_{i*} \setminus V_{ii}$ and end in~$V_{*i} \setminus V_{ii}$ and have all internal vertices in~$V_{ii}$. 
		On the other hand a path in~$\mathcal{Q}_{i-1}$ can only intersect $V_{*i}$ at its start point and $V_{i*}$ at its end point and it avoids~$V_{ii}$.
		Therefore in $\mathcal{Q}_{i} = \mathcal{Q}_{i-1} \cup \mathcal{Q}'_i$, all indegrees and outdegrees are at most $1$. A vertex has indegree zero in~$\mathcal{Q}_{i-1}\cup \mathcal{Q}_i'$ if and only if it is a start point of some path in $\mathcal{Q}_{i-1}\cup \mathcal{Q}_i'$ but not an endpoint of any path in $\mathcal{Q}_{i-1}\cup \mathcal{Q}_i'$, i.e.~the set of vertices of indegree zero is
		\begin{align*}
			\left(W_{i-1}^+\cup (V_{i*}\setminus V_{ii})\right)\setminus   \left(W_{i-1}^-\cup (V_{*i}\setminus V_{ii})\right)=\left(W_{i-1}^+\cup V_{i*}\right)\setminus V_{*i}
			=W_i^+
		\end{align*}
		and similarly the set of vertices of outdegree zero is $W_i^-$. Finally it remains to check that there are no cycles in~$\mathcal{Q}_{i-1}\cup \mathcal{Q}_i'$. By our choice of $\phi$, if there is a cycle, it spans all the vertices of~$V(\mathcal{Q}_{i-1}\cup \mathcal{Q}_i')$, but this cannot happen because vertices in $W_i^+\neq \emptyset$ have in-degree zero.
	\end{proof}

	%%%%%%%%%%%%%%%%%%%%%%%%%%%%%%%%%%%%%%%%%%%%%%%%%%%%%%%%%%

	\section{Balanced path systems and path contraction}\label{sec:Path-Contraction}

	As mentioned in the previous section, in order to apply Lemma~\ref{lma:longcycle2} to obtain a suitable cycle partition of a dense regular digraph $G$, we will require a vertex partition $\mathcal{P}$ of $G$ such as that given by Theorem~\ref{thm:digraphstructure} but with the additional properties that $\mathcal{P}$ is balanced and $S(\mathcal{P})$ has few connected components. 
	In this section, we state a result (Lemma~\ref{lma:balancingpathsystemcombined}) that allows us to adjust a partition $\mathcal{P}$ to have these additional properties.
	In particular, if $\mathcal{P}$ is not balanced, Lemma~\ref{lma:balancingpathsystemcombined} guarantees us a so-called $\mathcal{P}$-balanced path system in $G$ whose ``contraction''  makes $\mathcal{P}$ balanced in a suitable way. At the end of the section we show how Lemma~\ref{lma:balancingpathsystemcombined} implies our main result, Theorem~\ref{thm:main}.
	
	Let $\mathcal{P} =\{V_{ij}:i,j \in [k]\}$ be a partition of a vertex set~$V$ and 
	let $G$ be a digraph on~$V$. 
	Recall the definition of~$G_{ij}$ and of $P$ being balanced at the beginning of Section~\ref{sec:longcycles}.
	Write $G_{i*} = \bigcup_{j\neq i} G_{ij}$ and $G_{*j} = \bigcup_{i \ne j} G_{ij}$.
	Let $\mathcal{B}(G, \mathcal{P}) = G - \bigcup_{i \in [k]} G_{ii}$.
	Note that $\mathcal{B}(G, \mathcal{P}) = \bigcup_{i \in [k]} G_{i*} = \bigcup_{j \in [k]} G_{*j}$.
	We say that a digraph~$H$ on~$V$ (where $H$ is usually a path system in $G$) is \emph{$\mathcal{P}$-balanced} if, for all~$i \in [k]$, 
	\begin{align*}
		|V_{i*}|-|V_{*i}| =  e_H(V_{i*},V) - e_H(V,V_{*i}) = \sum_{j \in [k]} e(H_{ij}) - \sum_{j \in [k]} e(H_{ji}) = e(H_{i*}) - e(H_{*i}).
	\end{align*}
	The above gives three equivalent conditions for $H$ to be $\mathcal{P}$-balanced. Note that if $H$ is regular, then it is $\mathcal{P}$-balanced for any $\mathcal{P}$.
	
	Let $Q$ be a path in $G$ from~$v_+ \in V_{i_+j_+}$ to~$v_- \in V_{i_-j_-}$.
	We define the \emph{$Q$-contracted subgraph~$G'$ of~$G$} to be $G \setminus V(Q)$ together with a new vertex $w$ such that $N^+_{G'}(w) = N^+_G(v_-) \setminus V(Q)$ and $N^-_{G'}(w) = N^-_G(v_+) \setminus V(Q)$.
	We call $\mathcal{P}'=\{V_{ij}':i,j \in [k]\}$ the \emph{$Q$-contracted partition of $\mathcal{P}$}, where
	\begin{align}
		V_{ij}' = \begin{cases}
			( V_{ij} \setminus V(Q) )  \cup \{w\} & \text{if $(i,j) = (i_-,j_+)$};\\
			V_{ij} \setminus V(Q) &\text{otherwise}.
		\end{cases}
	\end{align}
	Let $\mathcal{Q}$ be a path system in $G$.
	The \emph{$\mathcal{Q}$-contracted subgraph of~$G$} (and \emph{$\mathcal{Q}$-contracted partition of~$\mathcal{P}$}) is obtained by successively contracting each $Q \in \mathcal{Q}$ for~$G$ (and~$\mathcal{P}$, respectively). 
	
	The following two propositions follow from the definition of $\mathcal{Q}$-contraction.
	In fact the definitions are chosen precisely so that these propositions hold.
	
	\begin{proposition} \label{prop:1-fact}
		Let $G$ be a digraph and $\mathcal{Q}$ be a path system in~$G$.
		Suppose that the $\mathcal{Q}$-contracted subgraph of~$G$ contains a $1$-factor with $\ell$ cycles. 
		Then $G$ also contains a $1$-factor containing~$\mathcal{Q}$ with $\ell$ cycles. (The new $1$-factor is simply the old $1$-factor with the paths in $\mathcal{Q}$ uncontracted.)
	\end{proposition}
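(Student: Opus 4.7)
The plan is to induct on $|\mathcal{Q}|$, so that the entire argument reduces to the single-path case, which is then handled by carefully unpacking the definition of $Q$-contraction. In the single-path case, write $\mathcal{Q} = \{Q\}$ with $Q$ a path from $v_+$ to $v_-$ in $G$, and let $G'$ be the $Q$-contracted subgraph, with $w$ the new vertex replacing $V(Q)$. By definition, $N_{G'}^+(w) = N_G^+(v_-) \setminus V(Q)$ and $N_{G'}^-(w) = N_G^-(v_+) \setminus V(Q)$, and every other edge of $G'$ is an edge of $G - V(Q)$.

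Suppose $G'$ has a $1$-factor $\mathcal{F}'$ consisting of $\ell$ vertex-disjoint cycles. Exactly one cycle $C' \in \mathcal{F}'$ contains $w$; write it as $w\, u_1 u_2 \cdots u_k\, w$. By the definition of the edges at $w$ in $G'$, the edge $wu_1$ corresponds to an edge $v_- u_1 \in E(G)$, and the edge $u_k w$ corresponds to an edge $u_k v_+ \in E(G)$, while $u_1 u_2, \ldots, u_{k-1} u_k$ are edges of $G - V(Q)$. Hence replacing $w$ by the path $Q$ yields a cycle
\[
C \;=\; v_+ \,(\text{via } Q)\, v_- \, u_1\, u_2 \cdots u_k\, v_+
\]
in $G$ with $V(C) = V(Q) \cup \{u_1, \ldots, u_k\}$, which by construction contains $Q$. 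The remaining $\ell - 1$ cycles of $\mathcal{F}'$ avoid $w$ and so are cycles of $G - V(Q)$, and they are vertex-disjoint from $C$ since $\mathcal{F}'$ is a $1$-factor and the $u_i$ are the only non-$Q$ vertices used by~$C$. Thus $\mathcal{F} = \{C\} \cup (\mathcal{F}' \setminus \{C'\})$ is a $1$-factor of $G$ with exactly $\ell$ cycles, and it contains all edges of~$Q$.

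For the general case, order $\mathcal{Q} = \{Q_1, \ldots, Q_t\}$ in the order the contractions were applied, so that the $\mathcal{Q}$-contracted subgraph $G^{(t)}$ was obtained from $G^{(t-1)}$ by $Q_t$-contraction, where $G^{(0)} = G$; the $\mathcal{Q}$-contracted subgraph is $G^{(t)}$ by definition. Starting from the given $1$-factor of $G^{(t)}$ with $\ell$ cycles, apply the single-path case to successively lift back through $G^{(t-1)}, G^{(t-2)}, \ldots, G^{(0)} = G$; each step preserves the number of cycles, adds the edges of the corresponding $Q_i$ to the $1$-factor, and keeps the previously-added paths intact (since the single-path case guarantees that the lifted cycle contains all the edges of the path being uncontracted, and earlier lifted paths lie in the portion of the cycle that is not touched). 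The end result is a $1$-factor of $G$ with $\ell$ cycles containing every edge of $\bigcup_i Q_i = \mathcal{Q}$.

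The only real obstacle is bookkeeping: verifying that the single uncontraction truly preserves the cycle count (exactly one cycle is modified) and that iterated uncontractions do not disturb paths that were restored in earlier steps. Both facts follow immediately from the fact that the uncontracted path $Q_i$ ends up as a contiguous segment of the lifted cycle, so any subsequent uncontraction either acts on a vertex of this segment that was itself a previously-contracted path (and so gets expanded inside the segment) or acts elsewhere in the $1$-factor.
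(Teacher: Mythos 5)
Your proof is correct and fills in the details of what the paper only asserts in one sentence (``follows from the definition of $\mathcal{Q}$-contraction''), namely that uncontracting each path replaces the new vertex by a contiguous segment inside a single cycle, preserving the cycle count; the reduction to the single-path case and the disjointness argument that earlier-restored paths are undisturbed are exactly the bookkeeping the paper leaves implicit.
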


	\begin{proposition}\label{prop:min-degree-contraction2}
		Let $G$ be a digraph on $n$ vertices and let $\mathcal{P}=\{V_{ij}:i,j\in [k]\}$ be a partition of~$V(G)$.
		Let $\mathcal{Q}$ be a path system in~$G$ such that $e(\mathcal{Q}) < |V_{i*}|, |V_{*i}|$ for all $i \in [k]$. 
		Let $G'$ and $\mathcal{P}' = \{ V'_{ij} : i,j \in [k]\}$ be the $\mathcal{Q}$-contracted subgraph of~$G$  and $\mathcal{Q}$-contracted partition of~$\mathcal{P}$, respectively. 
		Then for all $i \in [k]$,  $\delta(G'[V'_{i*},V'_{*i}] )\geq \delta(G[V_{i*},V_{*i}] ) - 2 e(\mathcal{Q})$.\footnote{Note that, in the definition of contraction, the new vertices created  are placed in exactly the right parts to ensure the degree condition in the proposition.}
	\end{proposition}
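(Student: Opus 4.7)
The plan is to bound, for each vertex $x$ of the bipartite graph $G'[V'_{i*},V'_{*i}]$, its degree directly by tracing $x$ back to an ``ancestor'' in the original graph and counting the edges that the contraction destroys. Fix $i\in[k]$ and consider $x\in V'_{i*}$; the goal is to show $d^+_{G'}(x,V'_{*i})\ge \delta(G[V_{i*},V_{*i}])-2e(\mathcal{Q})$. Set $\tilde x:=x$ if $x$ is an original vertex, and $\tilde x:=v_-(Q)$ if $x$ is the new vertex $w_Q$ produced by contracting some $Q\in\mathcal{Q}$. The placement rule $w_Q\in V'_{i_-(Q)j_+(Q)}$ forces $v_-(Q)\in V_{i*}$ whenever $w_Q\in V'_{i*}$, so in either case $\tilde x\in V_{i*}$, and hence $d^+_G(\tilde x,V_{*i})\ge \delta(G[V_{i*},V_{*i}])$.

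The core of the argument is a direct comparison of out-neighbors of $\tilde x$ in $V_{*i}$ (in $G$) with out-neighbors of $x$ in $V'_{*i}$ (in $G'$). The part $V'_{*i}$ consists of the original vertices of $V_{*i}\setminus V(\mathcal{Q})$ together with the new vertices $w_{Q'}$ with $v_+(Q')\in V_{*i}$. For each out-neighbor $y\in V_{*i}$ of $\tilde x$ in $G$, the edge $\tilde x y$ either survives in $G'$ (as $xy$ when $y\notin V(\mathcal{Q})$, or as $xw_{Q'}$ when $y=v_+(Q')$ is a path start vertex not identified with $x$ itself) or is destroyed. Only two sub-cases destroy edges: $y$ is a non-start vertex of some path of $\mathcal{Q}$ and therefore has no image in $G'$, or $x=w_Q$ and $y=v_+(Q)$, in which case the would-be image is a self-loop on $w_Q$ and is deleted. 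The number of non-start vertices across $\mathcal{Q}$ equals $\sum_{Q\in\mathcal{Q}}e(Q)=e(\mathcal{Q})$, and the self-loop case contributes at most one further loss, so the total number of lost out-neighbors is at most $e(\mathcal{Q})+1$. Consequently $d^+_{G'}(x,V'_{*i})\ge \delta(G[V_{i*},V_{*i}])-e(\mathcal{Q})-1\ge\delta(G[V_{i*},V_{*i}])-2e(\mathcal{Q})$ as soon as $e(\mathcal{Q})\ge 1$; the case $e(\mathcal{Q})=0$ is immediate, because then every path is a single vertex and contraction only renames vertices without altering adjacencies.

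A symmetric argument handles $x\in V'_{*i}$: one takes $\tilde x:=v_+(Q)\in V_{*i}$ when $x=w_Q$ (well-defined since $w_Q\in V'_{*i}$ forces $j_+(Q)=i$) and bounds $d^-_{G'}(x,V'_{i*})$ by the same four-way case analysis on in-neighbors of $\tilde x$ in $V_{i*}$. Combining these two bounds yields the claimed inequality on $\delta(G'[V'_{i*},V'_{*i}])$. The main obstacle is not conceptual but notational: one must parse the placement rule $w_Q\in V'_{i_-(Q)j_+(Q)}$ carefully, since it is designed precisely so that ancestors of $x$ land in the correct part of the original partition, and one must track successive contractions so that the chord $v_-(Q)v_+(Q)$ of a single path is seen to be the only source of loss beyond the straightforward removal of non-start path vertices.
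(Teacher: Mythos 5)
The paper states this proposition without proof, treating it as immediate from the definition of $\mathcal{Q}$-contraction; the accompanying footnote merely observes (as you do) that the placement rule $w\in V'_{i_-j_+}$ is designed precisely so that the new vertex lands in the correct part. Your ancestor-tracing case analysis is a correct and complete verification of what the paper leaves implicit: in particular, you correctly identify the two sources of degree loss (interior path vertices and the potential chord $v_-(Q)v_+(Q)$ producing a self-loop), and your handling of the $e(\mathcal{Q})=0$ case and the threshold $e(\mathcal{Q})\ge1$ closing the gap to $2e(\mathcal{Q})$ is sound.
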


	\begin{proposition}\label{prop:contracting}
		Let $\mathcal{P}=\{V_{ij}:i,j\in [k]\}$ be a partition of a vertex set~$V$ and $\mathcal{Q}$ be a $\mathcal{P}$-balanced path system on~$V$.
		Then the $\mathcal{Q}$-contracted partition of $\mathcal{P}$ is balanced. 
	\end{proposition}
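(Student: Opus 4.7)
The plan is to verify the identity
\begin{align*}
|V'_{i*}| - |V'_{*i}| = \bigl(|V_{i*}| - |V_{*i}|\bigr) - \bigl(e_\mathcal{Q}(V_{i*}, V) - e_\mathcal{Q}(V, V_{*i})\bigr)
\end{align*}
for every $i \in [k]$; since $\mathcal{Q}$ is $\mathcal{P}$-balanced the right-hand side is zero, and balance of $\mathcal{P}'$ follows. I would prove this by a direct counting argument, first for contraction of a single path and then iterated across $\mathcal{Q}$.

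First, consider contracting a single path $Q$ from $v_+ \in V_{i_+ j_+}$ to $v_- \in V_{i_- j_-}$, with the new vertex $w$ placed in $V'_{i_- j_+}$. Removing $V(Q)$ from $V_{i*}$ and adding $w$ exactly when $i = i_-$ gives $|V'_{i*}| = |V_{i*}| - |V(Q)\cap V_{i*}| + \mathbbm{1}[i = i_-]$. On the other hand, writing $Q = v_0 v_1 \cdots v_\ell$ with $v_\ell = v_-$, each vertex of $V(Q)\cap V_{i*}$ either starts an edge of $Q$ (contributing to $e_Q(V_{i*},V)$) or is the terminal endpoint $v_-$, so $|V(Q)\cap V_{i*}| = e_Q(V_{i*},V) + \mathbbm{1}[v_- \in V_{i*}]$. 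Since $v_- \in V_{i_- *}$, the latter indicator equals $\mathbbm{1}[i = i_-]$, cancelling the correction for $w$ and yielding $|V'_{i*}| = |V_{i*}| - e_Q(V_{i*}, V)$. The symmetric computation on the other side (using $v_+ \in V_{*j_+}$ and the placement of $w$ in $V'_{*j_+}$) gives $|V'_{*i}| = |V_{*i}| - e_Q(V, V_{*i})$.

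Because the paths of $\mathcal{Q}$ are pairwise vertex-disjoint, contracting one path leaves the part memberships of vertices on the other paths unchanged, so the single-path identity iterates cleanly. After contracting all of $\mathcal{Q} = \{Q_1,\dots,Q_r\}$ in turn one obtains $|V'_{i*}| = |V_{i*}| - e_\mathcal{Q}(V_{i*}, V)$ and $|V'_{*i}| = |V_{*i}| - e_\mathcal{Q}(V, V_{*i})$. Subtracting and applying the defining equation of $\mathcal{P}$-balance of $\mathcal{Q}$ gives $|V'_{i*}| = |V'_{*i}|$ for every $i \in [k]$, which is what we want.

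There is no genuine obstacle: this is essentially a bookkeeping check that the definition of the $Q$-contracted partition (specifically, the placement of $w$ into $V'_{i_- j_+}$ rather than, say, $V'_{i_+ j_+}$ or $V'_{i_- j_-}$) was designed precisely so that the indicator ``endpoint of $Q$ lies in $V_{i*}$'' cancels. The only point worth a second thought is the legitimacy of iterating the single-path identity across $\mathcal{Q}$, which rests solely on the vertex-disjointness of the paths.
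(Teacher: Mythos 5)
Your proof is correct and follows essentially the same route as the paper's: express $|V'_{i*}|$ and $|V'_{*i}|$ after contracting a single path in terms of the edge counts $e_Q(V_{i*},V)$ and $e_Q(V,V_{*i})$, iterate over $\mathcal{Q}$ using vertex-disjointness, and invoke the $\mathcal{P}$-balance of $\mathcal{Q}$. The only cosmetic difference is that the paper bundles the two indicator cancellations into a single identity via the quantities $q_{ij}=e(Q_{ij})$, whereas you spell out the bijection between $V(Q)\cap V_{i*}\setminus\{v_-\}$ and out-edges of $Q$ starting in $V_{i*}$; you also make the iteration step slightly more explicit than the paper does, which is a welcome clarification but not a different argument.
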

	
	\begin{proof}
		Let $Q \in \mathcal{Q}$ be a path from~$v_+ \in V_{i_+j_+}$ to~$v_- \in V_{i_-j_-}$.
		For each $i,j \in [k]$, let $q_{ij} = e(Q_{ij})$, i.e.\ the number of edges of $Q$ from $V_{i*}$ to $V_{*j}$.
		Let $\mathcal{P}' = \{ V_{ij}' : i,j \in [k] \} $ be the $Q$-contracted partition of~$\mathcal{P}$.
		Consider $i \in [k]$. 
		Note that
		\begin{align*}
			|V'_{i*}| &=  |V_{i*}| - |V_{i*} \cap V(Q) | + \indicate({i = i_-}) = |V_{i*}| - \sum_{j \in [k]} q_{ij}, \\
			|V'_{*i}| &=  |V_{*i}| - |V_{*i} \cap V(Q) | + \indicate({i = j_+}) = |V_{*i}| - \sum_{j \in [k]} q_{ji}.
		\end{align*}
		Therefore, $|V'_{i*}| - |V'_{*i}| = |V_{i*}| - |V_{*i}| - \sum_{j \in [k]} (q_{ij} - q_{ji}) = |V_{i*}| - |V_{*i}| - ( e(Q_{i*}) - e(Q_{*i}) )$. 
		A similar statement holds for the $\mathcal{Q}$-contracted partition~$\mathcal{P}^* = \{ V_{ij}^* : i,j \in [k] \}$ of~$\mathcal{P}$, and
			the result follows as $\mathcal{Q}$ is $\mathcal{P}$-balanced (i.e.\ for each $i \in [k]$, $ |V^*_{i*}| - |V^*_{*i}|=|V_{i*}| - |V_{*i}| - ( e(\mathcal{Q}_{i*}) - e(\mathcal{Q}_{*i}) )=0$).
	\end{proof}

	The next lemma says that one can adjust the vertex partition~$\mathcal{P}$  found in Theorem~\ref{thm:digraphstructure}  by a small amount, and find a $\mathcal{P}$-balanced path system~$\mathcal{Q}$ such that $S(\mathcal{P}^*)$ has few connected components, where $\mathcal{P}^*$ is the $\mathcal{Q}$-contracted partition of~$\mathcal{P}$. It is exactly what we need to prove Theorem~\ref{thm:main}, as we shall see.
	In Section~\ref{sec:pfbalancingpathsystem}, we break this lemma down into two further lemmas.
	
	\begin{lemma} \label{lma:balancingpathsystemcombined}
		Let $1/n \ll \gamma \ll \alpha, 1/k$.
		Let $G$ be a $d$-regular digraph on $n$ vertices with $d \ge \alpha n$ and $\mathcal{P}=\{V_{ij}:i,j\in [k]\}$ be a partition of~$V(G)$ such that, for all $i,j \in [k]$, 
		\begin{enumerate}[label ={\rm(\roman*)}]
			\label{itm:balancingpathsystemcombined:1}
			\item  $\delta( G [ V_{i*},V_{*i} ] ) \ge d/k$; 
			\item  $\left| |V_{i*}| - |V_{*i}| \right| \le \gamma n$; 
			\item  $ |V_{i*}|, |V_{*i}|  \ge d - \gamma n $;\label{itm:balancingpathsystemcombined:5}
   
      \item all but at most $\gamma n $ vertices $x \in V_{ij}$ satisfy $d^+(x, V_{*i}), d^-(x, V_{j*})  \ge d - \gamma n $.\label{itm:NEW-DEGREE-CONDITION}
		\end{enumerate}
		Then there exists a partition $\mathcal{P}' = \{ V'_{ij} : i,j \in [k]\}$ of~$V(G)$ and a $\mathcal{P}'$-balanced path system~$\mathcal{Q}$ in~$G$ such that 
		\begin{enumerate}[label ={\rm(\alph*)}]
			\item for all $i, j \in [k]$, $| V_{ij} \triangle V'_{ij}| \le \gamma^{1/2} n$;
			\label{itm:balancingpathsystemcombined:a}
			\item for all $i \in [k], \delta(G[V_{i*}', V_{*i}']) \geq (d/k) - \gamma^{1/2}n$;
			\label{itm:balancingpathsystemcombined:b}    
			\item $ e(\mathcal{Q}) \le \gamma^{1/2} n$;
			\label{itm:balancingpathsystemcombined:c}
			\item $S(\mathcal{P}^*)$ has at most $n/(qd+1)$ connected components, where $\mathcal{P}^*$ is the $\mathcal{Q}$-contracted partition of~$\mathcal{P}'$, and we set $q = 2$ if $G$ is an oriented graph and $q=1$ otherwise.
			\label{itm:balancingpathsystemcombined:d}
		\end{enumerate}
	\end{lemma}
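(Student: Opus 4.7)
My plan is to split the proof into two cases depending on how close $G$ is to the extremal structure (a disjoint union of complete digraphs, or of regular tournaments for oriented graphs), reducing the lemma to the two specialised lemmas stated by the authors: Lemma~\ref{lem:balancedpath} handling the non-extremal regime via a flow argument, and Lemma~\ref{lma:regdigraph} (together with Proposition~\ref{prop:balancepartition}) handling the extremal one. Concretely, I would introduce a parameter $\eta$ with $\gamma \ll \eta \ll 1/k$ and split based on the number of edges of $\mathcal{B}(G,\mathcal{P})$, i.e.\ the edges of $G$ not contained in any $G_{ii}$. Using the regularity of $G$ together with hypotheses~(iii) and~(iv), one checks that if $e(\mathcal{B}(G,\mathcal{P}))$ is small then each $G[V_{i*}]$ is close to being a clique of order roughly $d+1$, so the partition already essentially matches the extremal structure.

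In the non-extremal case I would set $\mathcal{P}'=\mathcal{P}$, so~(a) is immediate and~(b) is a direct consequence of hypothesis~(i). Applying Lemma~\ref{lem:balancedpath} yields a $\mathcal{P}$-balanced path system $\mathcal{Q}$ with $e(\mathcal{Q}) \le \gamma^{1/2}n$, giving~(c). For~(d), the abundance of crossing edges provides the flexibility to ensure that the paths of $\mathcal{Q}$ have endpoints in many different parts $V_{ij}$ with $i\ne j$; after contraction, each such path contributes a new vertex in some $V^*_{i_- j_+}$ with $i_-\ne j_+$, producing an edge of $S(\mathcal{P}^*)$ that merges the corresponding components. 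For digraphs this already forces the component count down to at most $n/(d+1)$, while for oriented graphs I would additionally invoke Proposition~\ref{prop:orientedcomponent} to pair up components and reach the target $n/(2d+1)$.

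In the extremal case I would first apply Proposition~\ref{prop:balancepartition} to pass from $\mathcal{P}$ to a slightly modified partition $\mathcal{P}'$ with $|V_{ij}\triangle V'_{ij}|\le \gamma^{1/2}n$, better aligned with the clique/tournament structure of $G$; this secures~(a), and since the symmetric differences are small, a direct degree count (in the spirit of Lemma~\ref{lma:birobexpdifferent}) preserves the bipartite minimum degree condition within the slack of~(b). I would then apply Lemma~\ref{lma:regdigraph} to the modified partition to produce a $\mathcal{P}'$-balanced path system $\mathcal{Q}$ satisfying~(c) and~(d), the idea being that the near-cliques (resp.\ near-tournaments) comprising $G$ leave essentially no choice about which parts of $\mathcal{P}'$ to merge via contraction.

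The hard part is the extremal case, especially for oriented graphs. There the crossing edges of $\mathcal{B}(G,\mathcal{P})$ are scarce, so one cannot route a balanced path system through arbitrary cross-cluster edges; Lemma~\ref{lma:regdigraph} must instead exploit the rigidity of the near-clique (or near-tournament) structure to place short paths between the right pairs of indices in~$[k]$. For oriented graphs, Proposition~\ref{prop:orientedcomponent} is essential to guarantee that the roughly $n/d$ clusters arising from Theorem~\ref{thm:digraphstructure} can be reliably paired up in $S(\mathcal{P}^*)$, bringing the total number of components down to at most~$n/(2d+1)$.
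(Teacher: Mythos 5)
Your proposal correctly identifies the three ingredients the paper uses — Lemma~\ref{lem:balancedpath}, Proposition~\ref{prop:balancepartition}, and Lemma~\ref{lma:regdigraph}, with Proposition~\ref{prop:orientedcomponent} playing a supporting role — and the broad ``non-extremal vs.\ extremal'' dichotomy is the right shape. However, the way you propose to organise the case split, and the way you argue conclusion~\ref{itm:balancingpathsystemcombined:d}, contain genuine gaps.

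First, your case split is on $e(\mathcal{B}(G,\mathcal{P}))$, but this quantity does not match either side of the dichotomy. On the extremal side, Proposition~\ref{prop:balancepartition} requires condition~(iii) there (a per-vertex bound on $d^+(v,V_{*i})-d^-(v,V_{i*})$), which does not follow from $e(\mathcal{B}(G,\mathcal{P}))$ being small: even with $V_{ij}=\emptyset$ for $i\ne j$ there can be many cross-block edges, and conversely a small total edge count does not control each vertex's imbalance. On the non-extremal side, the ``moreover'' clause of Lemma~\ref{lem:balancedpath} (which is what actually delivers non-triviality) is triggered by~(m1) or~(m2), neither of which is equivalent to $e(\mathcal{B}(G,\mathcal{P}))$ being large. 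The paper avoids this mismatch entirely: it applies Lemma~\ref{lem:balancedpath} unconditionally, and only if the resulting $S(\mathcal{P}^0)$ has too many components does it deduce (via the contrapositive of the ``moreover'' statement) that both~(m1) and~(m2) fail — which are exactly conditions~(ii) and~(iii) of Proposition~\ref{prop:balancepartition}. The extremality hypotheses needed later thus come for free rather than having to be established directly.

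Second, the actual mechanism behind~\ref{itm:balancingpathsystemcombined:d} is missing. You argue that ``abundance of crossing edges'' makes $\mathcal{Q}$ merge many components; but Lemma~\ref{lem:balancedpath} only promises one non-trivial path (hence one edge of $S(\mathcal{P}^*)$), so a priori you only save one component. The reason this suffices is the paper's Claim~\ref{clm:S(P)-empty}: one shows $k=k^*:=\lfloor n/(qd+1)\rfloor+1$, so dropping from $k$ to $k-1$ components hits exactly $n/(qd+1)$. For digraphs this follows from $|V_{i*}|\ge d-\gamma n$, but for oriented graphs $|V_{i*}|\ge d-\gamma n$ is not strong enough, and that is where Proposition~\ref{prop:orientedcomponent} is used — not to ``pair up components,'' but to give a lower bound of roughly $2d$ (or $9d/4$ for non-singleton components) on $|\bigcup_{i\in I}(V_{i*}\cup V_{*i})|$, forcing $S(\mathcal{P}^0)$ to have only singleton components and $k=k^*$. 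Without this counting step the proof of~\ref{itm:balancingpathsystemcombined:d} does not close, in either case of your split.

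In short: right lemmas, right intuition that the argument bifurcates, but the proposed bifurcation criterion is not compatible with the hypotheses of the auxiliary results, and the key counting claim relating $k$, $k^*$, and the structure of $S(\mathcal{P}^0)$ is absent.
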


	\subsection{Proof of Theorem~\ref{thm:main}}
	
	We now prove Theorem~\ref{thm:main} assuming Lemma~\ref{lma:balancingpathsystemcombined}.
	
	\begin{proof}[Proof of Theorem~\ref{thm:main}]
		Choose a non-decreasing function $g : (0, 1) \rightarrow (0, 1)$  such that the requirements of Lemmas~\ref{lma:longcycle2} (with $m = \alpha n/2$) and~\ref{lma:balancingpathsystemcombined} are satisfied whenever $n, \gamma, \nu, \tau, \alpha$ satisfy
		\begin{align*}
			1/n \ll_g \gamma \ll_g \nu \leq \tau \ll_g \alpha,
		\end{align*}
		where we write $a \ll_g b$ to mean $a \leq g(b)$.
		Set $f(x) := \min(x, g(x^2/4))$. Fix $\tau \leq f(\alpha)$ and apply Theorem~\ref{thm:digraphstructure} to obtain $n_0$ such that for any $d$-regular digraph $G$ on $n \geq n_0$ vertices with $d \ge \alpha n $, there exists $\gamma$ and $\nu$ such that 
		\begin{align*}
			1/n \ll_f \gamma \ll_f \nu \leq \tau \ll_f \alpha
		\end{align*}
		and a partition~$\mathcal{P} = \{ V_{ij}: i,j \in [k]\}$ of~$V(G)$ satisfying, for all $i,j \in [k]$,
		\begin{enumerate}[label ={\rm(a$_{\arabic*}$)}]
			\item $G[V_{i*},V_{*i}]$ is a bipartite robust $(\nu, \tau)$-expander with partition~$V_{i*},V_{*i}$; \label{item:bip-rob-exp}
			\item for all $x \in V_{ij}$ and $i',j' \in [k]$, $d^+(x, V_{*i}) \ge d^+(x, V_{*i'})$ and $d^-(x, V_{j*}) \ge d^-(x, V_{j'*})$, so in particular, $\delta( G [ V_{i*},V_{*i} ] ) \ge d/k$; \label{item:large-good-degree}
			\item $\left| |V_{i*}| - |V_{*i}| \right| \le \gamma n$; \label{item:almost-balanced}
			\item all but at most $\gamma n $ vertices $x \in V_{ij}$ satisfy $d^+(x, V_{*i}), d^-(x, V_{j*})  \ge d - \gamma n $; \label{item:small-bad-edges}
			\item $k \le   n / (d-\gamma n)$. \label{item:number-of-parts}
		\end{enumerate}
		Note that~\ref{item:number-of-parts} in particular implies that $\gamma \ll 1/k$. Furthermore, \ref{item:large-good-degree} and \ref{item:small-bad-edges} together with $\gamma \ll 1/k$ imply that 
		\begin{enumerate}[label ={\rm(a$_{\arabic*}$)},resume]
			\item for all $i \in [k]$, $|V_{i*}|,|V_{*i}|\ge d-\gamma n$. 
			\label{itm:each-part-is-at-least-d}   
		\end{enumerate}
		
		Let $q=2$ if $G$ is oriented, and $q=1$ otherwise. 
		Apply Lemma~\ref{lma:balancingpathsystemcombined} and obtain a partition $\mathcal{P}' = \{ V'_{ij} : i,j \in [k]\}$ of~$V(G)$ and a $\mathcal{P}'$-balanced path system~$\mathcal{Q}$ in~$G$ such that 
		\begin{enumerate}[label ={\rm(b$_{\arabic*}$)}]
			\item for all $i, j \in [k]$, $| V_{ij} \triangle V'_{ij}| \le \gamma^{1/2} n$; \label{itm:b_1}
			\item for all $i \in [k], \delta(G[V_{i*}', V_{*i}']) \geq (d/k) - \gamma^{1/2}n$;
			\label{itm:b_deg}
			\item $ e(\mathcal{Q}) \le \gamma^{1/2} n$; \label{itm:b_2}
			\item $S(\mathcal{P}^*)$ has at most $n/(qd+1)$ connected components, where $\mathcal{P}^*$ is the $\mathcal{Q}$-contracted partition of~$\mathcal{P}'$. \label{itm:b_3}
		\end{enumerate}
		
		Let $\mathcal{P}^* = \{V^*_{ij}:i,j \in [k]\}$ and let $G^*$ be the $\mathcal{Q}$-contracted subgraph of~$G$. 
		We will check that, for all $i, j \in [k]$,
		\begin{enumerate}[label ={\rm(c$_{\arabic*}$)}]
			\item $\mathcal{P}^*$ is balanced; \label{itm:c_1}
			\item $| V_{ij} \triangle V^*_{ij}| \le 5 \gamma^{1/2} n$; \label{itm:c_2}
			\item $|V_{i*}^*|,|V_{*i}^*|\ge d- 6 k\gamma^{1/2} n \ge d/2$; \label{itm:c_3}
			\item  $G^*[V^*_{i*},V^*_{*i}]$ is a bipartite robust $(\nu/2, 2\tau)$-expander with bipartition $V^*_{i*},V^*_{*i}$; \label{itm:c_4}
			\item  $\delta( G^*[V^*_{i*},V^*_{*i}] ) \ge \alpha^2 |V^*_{i*}|/2$. \label{itm:c_5}
		\end{enumerate}
		Note that \ref{itm:c_1} holds by Proposition~\ref{prop:contracting}. 
		Consider $i,j \in [k]$.
		Note that 
		\begin{align*}
			|V^*_{ij} \triangle V_{ij}| 
			\le |V^*_{ij} \triangle V'_{ij}| + |V'_{ij} \triangle V_{ij}|
			\le 2 |V (\mathcal{Q}) |+|V'_{ij} \triangle V_{ij}|
			\overset{\mathclap{\text{\ref{itm:b_1},\ref{itm:b_2}}}}{\le} 4\gamma^{1/2} n +\gamma^{1/2} n = 5 \gamma^{1/2}n
		\end{align*}
		implying~\ref{itm:c_2}.
		Hence \ref{itm:c_3} follows from \ref{itm:each-part-is-at-least-d} and~\ref{itm:c_2} as $\gamma \ll \alpha,1/k$. By~\ref{item:bip-rob-exp}, \ref{itm:c_2}, \ref{itm:c_3}, 
		and $\gamma \ll \nu$,  Lemma~\ref{lma:birobexpdifferent} (applied to the bipartite graph $G^*[V^*_{i*},V^*_{*i}]\cup G[V_{i*},V_{*i}]$ with $V_{i*}, V_{*i}, V^*_{i*}, V^*_{*i}$ playing the roles of $A,B, A', B'$ respectively) implies that \ref{itm:c_4} holds. By
		\ref{itm:each-part-is-at-least-d}, \ref{itm:b_1}, and \ref{itm:b_2}, we have $e(\mathcal{Q})\leq \gamma^{1/2}n<d-\gamma n-k\gamma^{1/2}n\leq |V'_{i*}|,|V'_{*i}|$ as $\gamma\ll \alpha,1/k$. Hence, by Proposition~\ref{prop:min-degree-contraction2},
		\begin{align*}
			\delta( G^*[V^*_{i*},V^*_{*i}] ) 
			& \ge \delta( G [ V'_{i*},V'_{*i} ] ) - 2e(\mathcal{Q})
			\overset{\mathclap{\text{\ref{itm:b_deg},\ref{itm:b_2}}}}{\ge}  
			(d/k) - 3\gamma^{1/2}n  \geq  \alpha^2 |V^*_{i*}|/2,
		\end{align*}
		where the last inequality holds as $\gamma \ll \alpha, 1/k$ and  $k (\alpha - \gamma)\leq1$ by~\ref{item:number-of-parts}.
		Hence \ref{itm:c_5} holds.
		
		Apply Lemma~\ref{lma:longcycle2} (with $G^*$, $\mathcal{P}^*$, $d/2$, $\nu/2$, $2\tau$, $\alpha^2/2$ playing the roles of $G$, $\mathcal{P}$, $m$, $\nu$, $\tau$, $\alpha$, respectively) to each connected component $I$ in $S(\mathcal{P}^*)$.
		Thus, by~\ref{itm:b_3}, $G^*$ can be covered with at most $n/(qd + 1)$ vertex-disjoint cycles (where we have a cycle on $\cup_{i \in I} (V_{i*} \cup V_{*i})$ for each component $I$ of $S(\mathcal{P}^*)$ so that each cycle has length at least $d/2$ by \ref{itm:c_3}).
		By Proposition~\ref{prop:1-fact}, $G$ can be covered with at most $n/(qd+1)$ vertex-disjoint cycles (each of length at least $d/2$). 
	\end{proof}

	%%%%%%%%%%%%%%%%%%%%%%%%%%%%%%%%%%%%%%%%%%%%%%%%%%%%%%%%%%%%%%%%%%%%%%%%%%%%%%%%%%%%%%%%%%%%%%%%%%%%
	
	\section{Proof of Lemma~\ref{lma:balancingpathsystemcombined}} 
	\label{sec:pfbalancingpathsystem}

	In the last section we reduced the task of proving Theorem~\ref{thm:main} to the task of proving Lemma~\ref{lma:balancingpathsystemcombined}.
	In this section we reduce this further to the task of proving~Lemmas~\ref{lem:balancedpath} and~\ref{lma:regdigraph}. 
	We start by introducing the notion of a \emph{non-trivial path system}, which will help us control the number of connected components of $S(\mathcal{P}^*)$ (as required in the conclusion of Lemma~\ref{lma:balancingpathsystemcombined}).
	
	Recall the definitions at the beginning of Sections \ref{sec:longcycles} and \ref{sec:Path-Contraction}.
	Let $\mathcal{P} =\{V_{ij}:i,j \in [k]\}$ be a partition of a vertex set~$V$. 
	We call a path system~$\mathcal{Q}$ on~$V$  \emph{non-trivial} if there exists $i \ne j$ such that $V_{ij} \setminus V(\mathcal{Q}) \ne \emptyset$ or $\mathcal{Q}$ contains a path from~$V_{*j}$ to~$V_{i*}$. 
	Otherwise, we call $\mathcal{Q}$ a \emph{trivial} path system.
	Note that if $\mathcal{Q}$ is non-trivial, then the $\mathcal{Q}$-contracted partition~$\mathcal{P}'$ of~$\mathcal{P}$ has the property that $S(\mathcal{P}')$ has at least one edge so that its number of connected components is strictly less than its number of vertices.

	Lemma~\ref{lem:balancedpath} below shows that, under the same hypothesis as Lemma~\ref{lma:balancingpathsystemcombined},  there exists a $\mathcal{P}$-balanced path system with few edges, and moreover that this path system is non-trivial if further assumptions are made. We shall see at the end of the section that the existence of such a non-trivial $\mathcal{P}$-balanced path system is enough to prove Lemma~\ref{lma:balancingpathsystemcombined}. 
	In fact, a $\mathcal{P}$-balanced path system alone (without the condition of being non-trivial) is enough to prove Lemma~\ref{lma:balancingpathsystemcombined} except in the extremal cases when $d \approx n/k$ if $G$ is digraph and when $d \approx n/ 2k$ if $G$ is an oriented graph. 
	
	\begin{lemma} \label{lem:balancedpath}
		Let $1/n \ll \gamma \ll 1/k, \alpha$. 
		Let $G^0$ be a $d$-regular digraph on $n$ vertices with $d \ge \alpha n$ and $\mathcal{P}=\{V_{ij}:i,j\in [k]\}$ be a partition of~$V(G^0)$ such that, for all $i \in [k]$,
		\begin{enumerate}[label ={\rm(\roman*)}]
			\item  $\delta( G^0 [ V_{i*},V_{*i} ] ) \ge d/k$; \label{item:good-degree-condition}
			\item  $\left| |V_{i*}| - |V_{*i}| \right| \le \gamma n$; \label{itm:balancedpath3}
			\item  $ |V_{i*}|, |V_{*i}|  \ge d - \gamma n $.
		\end{enumerate}
		Then $G^0$ contains a $\mathcal{P}$-balanced path system~$\mathcal{Q}$ such that $e(\mathcal{Q}) \le k^2  \gamma n $. 
		Moreover, if one of the following holds
		\begin{enumerate}[label={\rm (m\arabic*)}]
			\item $\sum_{i,j \in [k] \colon i \ne j} |V_{ij}| > k^2\gamma n$, or \label{item:size-condition-V-ij} 
			\item there exists $v_0 \in V_{i_0j_0}$ for some $i_0 \ne j_0$ such that 
			$d^{+}(v_0,V_{*i_0})-d^{-}(v_0,V_{i_0*})\ge 100 k^{12} \gamma^{1/3}d$, \label{item:degree-condition-v0} 
		\end{enumerate}
		then we may assume that $\mathcal{Q}$ is non-trivial.
	\end{lemma}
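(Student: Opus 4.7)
The plan is to define the imbalances $b_i := |V_{i*}| - |V_{*i}|$ for each $i\in[k]$; then $\sum_i b_i = 0$ and $|b_i| \le \gamma n$ by hypothesis~(ii). Observe that a path system $\mathcal{Q}$ on $V(G^0)$ is $\mathcal{P}$-balanced if and only if $\sum_{j\ne i}(e(\mathcal{Q}_{ij}) - e(\mathcal{Q}_{ji})) = b_i$ for every $i\in[k]$ (the diagonal terms $e(\mathcal{Q}_{ii})$ cancel). I will construct $\mathcal{Q}$ as a matching in the cross-edge subgraph $\mathcal{B}(G^0,\mathcal{P})$, so that each path in $\mathcal{Q}$ is a single edge; the balance condition then reduces to an integer transportation problem on the auxiliary multi-digraph on $[k]$.

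The matching will come from a standard $s$--$t$ flow network $N$ with source $s$, sink $t$, supply arcs $s\to L_i$ of capacity $b_i^+ := \max(b_i,0)$, and demand arcs $R_j\to t$ of capacity $b_j^- := \max(-b_j,0)$; each $v\in V(G^0)$ is split into an in-node and an out-node joined by a unit-capacity arc (to enforce the matching property), with $L_i$ feeding the out-copy of $v$ and the in-copy feeding $R_j$ of unit capacity whenever $v\in V_{ij}$, and with a unit arc from the out-copy of $u$ to the in-copy of $v$ for each cross edge $uv\in E(\mathcal{B}(G^0,\mathcal{P}))$. The main claim is that the maximum $s$--$t$ flow in $N$ equals $\sum_i b_i^+$. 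This follows from a min-cut check: for any $S\subseteq[k]$ with $b_S := \sum_{i\in S} b_i > 0$, the density of $G^0$ yields
\[ e_{G^0}(V_{S*}, V\setminus V_{*S}) = d|V_{S*}| - e_{G^0}(V_{S*}, V_{*S}) \ge d|V_{S*}| - d|V_{*S}| = d\, b_S \gg b_S, \]
so the cut capacity is always at least $\sum_i b_i^+$. Integrality of the flow polytope then converts the flow into a $\mathcal{P}$-balanced matching $\mathcal{Q}$ with $e(\mathcal{Q}) = \sum_i b_i^+ \le k\gamma n/2 \le k^2\gamma n$.

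For non-triviality under (m1): since $\mathcal{Q}$ is a matching we have $|V(\mathcal{Q})| = 2e(\mathcal{Q}) \le k\gamma n$, which is strictly less than $\sum_{i\ne j}|V_{ij}| > k^2\gamma n$ whenever $k\ge 2$ (and the case $k=1$ makes (m1) vacuous). Hence some cross-cell $V_{ij}$ with $i\ne j$ contains a vertex outside $V(\mathcal{Q})$, and $\mathcal{Q}$ is non-trivial by the first clause of the definition. For non-triviality under (m2), I expect the argument is a local modification at $v_0$: if the flow-produced matching turns out trivial, the surplus $d^+(v_0, V_{*i_0}) - d^-(v_0, V_{i_0*}) \ge 100 k^{12}\gamma^{1/3}d$ provides abundantly many out-edges from $v_0$, and an augmenting-path rerouting in $N$ through $v_0$ can swap in such an edge so as to create a path from $V_{*j_0}$ to some $V_{i*}$ with $i\ne j_0$, while the degree slack ensures the balance equations and the $k^2\gamma n$ edge bound are preserved.

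The hardest step will be the non-triviality under (m2). Unlike (m1), where a pure size comparison works, (m2) demands an explicit local modification of the flow at $v_0$ that simultaneously introduces a row-column crossing, preserves the balance at every index, and keeps $e(\mathcal{Q})$ within budget. Formalising this will require a careful analysis of augmenting paths in $N$ anchored at $v_0$, exploiting the large surplus $100k^{12}\gamma^{1/3}d$ to accommodate the swap; managing the interaction between this forced edge and the rest of the flow is where the main technical work will lie.
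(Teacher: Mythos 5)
Your proposal makes the key simplifying claim that a $\mathcal{P}$-balanced $\mathcal{Q}$ can always be realised as a \emph{matching} in $\mathcal{B}(G^0,\mathcal{P})$; this is precisely what the paper's proof sketch explains cannot always be done, and why the paper introduces the ``extendable matchings'' $M_{ij}$ together with the high-degree vertex sets $X_{ij}^{\pm}$ (Lemma~\ref{lem:matching}) and then extends them into paths of length up to three (Proposition~\ref{prop:extendingmatching}). For a concrete obstruction (consistent with hypotheses~(i)--(iii)): take $k=3$, $|V_{12}|=|V_{23}|=2$ so that $b_1:=|V_{1*}|-|V_{*1}|=2$, $b_2=0$, $b_3=-2$, make $G^0_{12}$ and $G^0_{23}$ the only nonempty off-diagonal blocks with $e(G^0_{12})=e(G^0_{23})=2d$, and concentrate all heads of $G^0_{12}$ and all tails of $G^0_{23}$ on a three-vertex set $W\subseteq V_{22}$, each $w\in W$ retaining $d/3$ in-degree inside $V_{2*}$ and $d/3$ out-degree inside $V_{*2}$ to satisfy~(i). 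Any $\mathcal{P}$-balanced path system must use exactly two edges from each block; as a matching that would require four distinct endpoints in $W$, but $|W|=3$. Every balanced path system here necessarily contains a path of length two through $W$.

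Several of your dependent claims therefore fail as stated: $e(\mathcal{Q})=\sum_i b_i^+$ is false once paths have length greater than one (the paper controls $e(\mathcal{Q})$ by reapplying Proposition~\ref{prop:subgraph} after the flow step), and that same acyclic-reduction step is also needed to eliminate the directed cycles a raw flow solution can leave behind (issue~(iii) in the paper's sketch), which your outline does not address. Your min-cut certificate examines only cuts indexed by $S\subseteq[k]$ and ignores cuts through the unit vertex-capacity arcs, so it does not establish the required max-flow value; the paper instead constructs an explicit near-saturating fractional flow and rounds it (Claim~\ref{clm:fractionalflow}), after first passing to a sparse acyclic-among-parts subgraph via Proposition~\ref{prop:subgraph}. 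Finally, you acknowledge that the (m2) case is only a placeholder; the paper handles it structurally by deleting $v_0^+$ and the arc $s_{i_0}v_0^-$ from the flow network before solving, so that any flow through $v_0$ must arrive via some $s_i$ with $i\neq i_0$, which after the extension step of Proposition~\ref{prop:extendingmatching} forces a crossing path. Your ``augmenting-path rerouting'' idea is not worked out and would need to address simultaneously the matching obstruction above and the cycle issue.
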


	In case Lemma~\ref{lma:balancingpathsystemcombined} gives a $\mathcal{P}$-balanced but trivial path system (i.e. when both \ref{item:size-condition-V-ij} and \ref{item:degree-condition-v0} fail), 
	we use the next proposition to slightly modify $\mathcal{P}$ and Lemma~\ref{lma:regdigraph} to find the desired non-trivial, $\mathcal{P}$-balanced path system.

	\begin{proposition} \label{prop:balancepartition}
		Let $1/n \ll \gamma \ll 1/k, \alpha$. 
		Let $G^0$ be a $d$-regular digraph on $n$ vertices with $d \ge \alpha n$ and $\mathcal{P}=\{V_{ij}:i,j\in [k]\}$ be a partition of~$V(G^0)$ such that, for all distinct $i,j \in [k]$, 
		\begin{enumerate}[label ={\rm(\roman*)}]
			\item  $\delta( G^0 [ V_{i*},V_{*i} ] ) \ge d/k$; \label{itm:balance1}
			\item $\sum_{i',j' \in [k] \colon i' \ne j'} |V_{i'j'}| \le \gamma n$; \label{itm:balance4}
			\item for all $ v \in V_{ij}$, we have $ d^+(v,V_{*i}) - d^-(v, V_{i *}) \le \gamma n $.  \label{itm:balance5}
		\end{enumerate}
		Then there exists a partition $\mathcal{P}' = \{V_{ii}':i \in [k]\}$ of~$V(G^0)$ such that, for all $i \in [k]$, 
		\begin{enumerate}[label ={\rm(\roman*$'$)}]
			\item  $|V_{ii} \triangle V'_{ii}| \le \gamma n $; \label{itm:balance'2}
			\item  $\delta^0( G^0 [ V_{ii}' ] ) \ge d/k - \gamma n $. \label{itm:balance'1}
		\end{enumerate}
	\end{proposition}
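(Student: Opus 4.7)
My plan is to take $V_{ii}' := V_{ii} \cup \bigcup_{j \neq i} V_{ij}$ for each $i \in [k]$; that is, I send every off-diagonal vertex $v \in V_{ij}$ (with $j \ne i$) to the diagonal part indexed by its \emph{first} coordinate, leaving the diagonal parts otherwise intact. Since $\mathcal{P}$ is a partition of $V(G^0)$, this produces a partition $\mathcal{P}' = \{V_{ii}' : i \in [k]\}$ of $V(G^0)$. Condition~\ref{itm:balance'2} is then immediate, because $V_{ii} \subseteq V_{ii}'$ and $V_{ii}' \setminus V_{ii} \subseteq \bigcup_{i' \ne j'} V_{i'j'}$, which has size at most $\gamma n$ by hypothesis~\ref{itm:balance4}.

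To verify the semi-degree condition~\ref{itm:balance'1} I fix $i \in [k]$ and $v \in V_{ii}'$ and estimate $d^+(v, V_{ii}')$ and $d^-(v, V_{ii}')$. The key bookkeeping observation, which follows directly from~\ref{itm:balance4}, is that both $V_{*i}$ and $V_{i*}$ agree with $V_{ii}$ up to at most $\gamma n$ vertices, since $|V_{*i} \setminus V_{ii}| = \sum_{l \neq i} |V_{li}|$ and $|V_{i*} \setminus V_{ii}| = \sum_{l \neq i} |V_{il}|$ are both bounded by $\gamma n$; and $V_{ii} \subseteq V_{ii}'$. Consequently, any lower bound of the form $d^+(v, V_{*i}) \geq d/k - c\gamma n$ or $d^-(v, V_{i*}) \geq d/k - c\gamma n$ transfers to an analogous lower bound on $d^+(v, V_{ii}')$ or $d^-(v, V_{ii}')$ at the cost of only an additional $\gamma n$.

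For the degree bounds themselves, I split on the origin of $v$. If $v \in V_{ii}$, then $v$ lies on both sides of the bipartition of $G^0[V_{i*}, V_{*i}]$, so hypothesis~\ref{itm:balance1} directly yields $d^+(v, V_{*i}) \geq d/k$ and $d^-(v, V_{i*}) \geq d/k$. If instead $v \in V_{ij}$ for some $j \neq i$, then $v \in V_{i*}$, so~\ref{itm:balance1} gives $d^+(v, V_{*i}) \geq d/k$, and then the asymmetric hypothesis~\ref{itm:balance5} converts this into $d^-(v, V_{i*}) \geq d^+(v, V_{*i}) - \gamma n \geq d/k - \gamma n$. Combining with the transfer observation proves~\ref{itm:balance'1} up to an absolute constant in front of $\gamma$, which is harmless in the hierarchy $\gamma \ll 1/k, \alpha$.

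There is no genuine obstacle: the argument is essentially a bookkeeping check. The only stylistic point worth highlighting is precisely \emph{why} the assignment rule sends $V_{ij}$ to $V_{ii}'$ rather than to $V_{jj}'$: hypothesis~\ref{itm:balance5} controls out-degree into column $i$ in terms of in-degree from row $i$, giving direct control over the degrees of $v$ inside $V_{ii}$, whereas no analogous bound relating $d^-(v, V_{j*})$ to $d^+(v, V_{*j})$ is available for the alternative placement into $V_{jj}'$.
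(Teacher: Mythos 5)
Your proof is correct and uses exactly the same construction as the paper, namely $V_{ii}' := V_{i*}$, with the same degree estimates deriving from hypotheses~(i), (ii), (iii). One small simplification you overlooked: since $V_{ii}' = V_{i*}$ \emph{exactly}, we have $d^-(v,V_{ii}') = d^-(v,V_{i*})$ with no ``transfer'' needed for the in-degree, so the loss is exactly $\gamma n$ (not a constant multiple), matching the statement of the proposition verbatim.
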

	
	\begin{proof}
		For each $i \in [k]$, let  $V_{ii}' =V_{i*}$. 
		Clearly, $\mathcal{P}' = \{V_{ii}':i \in [k]\}$ is a partition of~$V(G^0)$. 
		Note that \ref{itm:balance4} implies~\ref{itm:balance'2}.
		For $v \in V_{ij}$ (possibly $i=j$), note that
		\begin{align*}
			d^{+}(v, V_{ii}')= d^{+}(v, V_{i*})\ge d^{+}(v, V_{ii}) \overset{\mathclap{\text{\ref{itm:balance4}}}}{\ge} d^{+}(v, V_{*i})-\gamma n \overset{\mathclap{\text{\ref{itm:balance1}}}}{\ge} d/k-\gamma n.   
		\end{align*}
		If $i\neq j$, similarly, we have 
		\begin{align*}
			d^{-}(v, V_{ii}')= d^{-}(v, V_{i*})\overset{\mathclap{\text{\ref{itm:balance5}}}}{\ge}d^{+}(v, V_{*i})-\gamma n \overset{\mathclap{\text{\ref{itm:balance1}}}}{\ge}d/k-\gamma n.
		\end{align*}
		Also, for all $v\in V_{ii}$, \ref{itm:balance1} implies $d^{-}(v, V_{ii}')= d^{-}(v, V_{i*})\ge d/k$. Hence~\ref{itm:balance'1} holds.
	\end{proof}

	\begin{lemma} \label{lma:regdigraph} 
		Let $d,k \in \mathbb{N}$ be such that $k\geq2$ and $d > 165k^5$.
		Let $G$ be a $d$-regular digraph on $n$ vertices and $\mathcal{P}=\{V_{ii}: i \in [k]\}$ be a partition of~$V(G)$ such that, for all $i \in [k]$,
		\begin{enumerate}[label ={\rm(\roman*)}]
			\item $|V_{ii}|\ge d/2$; \label{item:each-part-is-at-least-d-over-2}
			\item $n<(2d+1)k$, and $n<(d+1)k$ if $G$ is not oriented;\label{item:relation-n-d-k}
			\item for all $v \in V_{ii}$, $d^+(v,V_{ii})+d^-(v,V_{ii})\ge d/k$.\label{item:degree-is-large-inside-diagonal}
		\end{enumerate}
		Then $G$ contains a non-trivial $\mathcal{P}$-balanced path system~$\mathcal{Q}$ with $e (\mathcal{Q}) \le k$.
	\end{lemma}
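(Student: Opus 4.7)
The plan is to reduce the problem to finding a short directed cycle in an auxiliary \emph{reduced digraph} $R$ on $[k]$, then to lift it to vertex-disjoint cross edges in~$G$. For $i \neq j$ set $X_{ij} := e_G(V_{ii}, V_{jj})$ and let $R$ have the edge $ij$ precisely when $X_{ij} \geq 1$. Since $V_{ij} = \emptyset$ whenever $i \neq j$ we have $V_{i*} = V_{*i} = V_{ii}$, so the $\mathcal{P}$-balance condition becomes: for every $i$, the number of cross edges of $\mathcal{Q}$ leaving $V_{ii}$ equals the number entering; equivalently, the multiset of cross edges of $\mathcal{Q}$ projects to an Eulerian sub-multidigraph on $[k]$. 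Non-triviality simply says that $\mathcal{Q}$ contains at least one cross edge. Flow conservation in~$G$ (between each $V_{ii}$ and its complement) implies that in $R$ every vertex has in-degree equal to out-degree, so every non-isolated vertex has both positive in- and out-degree.

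First I would show that $R$ has at least one edge. If not, every vertex of~$G$ has all its neighbours in its own part, which forces $|V_{ii}| \geq d+1$ in the digraph case and $|V_{ii}| \geq 2d+1$ in the oriented case; summing contradicts hypothesis~(ii). Hence $R$ contains a directed cycle, and I take a shortest one $C = i_1 \to i_2 \to \cdots \to i_r \to i_1$ with $2 \leq r \leq k$; minimality ensures $C$ has no chords in~$R$. To lift $C$, I pick edges $e_s = u_s v_s \in E_G(V_{i_s}, V_{i_{s+1}})$ (indices mod~$r$) for each $s \in [r]$. In $e_1 \cup \cdots \cup e_r$ every vertex has in- and out-degree at most~$1$, so this is a valid path system unless the edges close up into a single directed $r$-cycle in~$G$, which happens iff $v_s = u_{s+1}$ for every $s$. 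If some $X_{i_s i_{s+1}} \geq 2$, I fix the other $e_t$ arbitrarily and pick $e_s$ to avoid the unique bad option; this produces the desired non-trivial $\mathcal{P}$-balanced path system with at most $r \leq k$ edges.

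The core case is the \emph{rigid} one, where every $X_{i_s i_{s+1}} = 1$ and the forced lift $u_1 \to u_2 \to \cdots \to u_r \to u_1$ really is a directed $r$-cycle in~$G$. In Subcase~A the $r$ cycle edges are the only cross edges of~$G$; then every vertex of $V_{i_s} \setminus \{u_s\}$ has all its neighbours in $V_{i_s}$, and for every $\ell \notin V(C)$ every vertex of $V_{\ell \ell}$ has all its neighbours in $V_{\ell \ell}$, so the same counting as before forces $|V_{ii}| \geq d+1$ (resp.~$\geq 2d+1$ in the oriented case) for every $i$, again contradicting hypothesis~(ii). In Subcase~B there exists a cross edge $e$ outside the $u_1 \ldots u_r u_1$ cycle; since $C$ has no chords in~$R$, $e$ has an endpoint in the outside region $O := [k] \setminus V(C)$. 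The sub-digraph $R \setminus E(C)$ is Eulerian and non-empty, hence contains a directed cycle, which I use on its own if it lies entirely in $O$ or else combine with an appropriate subpath of~$C$ to form an alternative closed walk in~$R$ of length at most~$k$ that uses at least one edge outside $E(C)$. Because this new walk uses a cross edge different from the one that locked up the original lift, the chain condition $v_s = u_{s+1}$ fails at the corresponding step and the lift becomes a valid path system.

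The main obstacle is Subcase~B: translating an abstract alternative walk in~$R$ into a concrete vertex-disjoint path system in~$G$ that respects balance, non-triviality and the budget of $k$ edges requires a careful case analysis of how $e$ attaches to the forced $G$-cycle, how the detour is routed, and what to do when the alternative walk is itself rigid. It is here that the hypotheses $|V_{ii}| \geq d/2$ with $d > 165 k^5$ (so $|V_{ii}| \gg k$) and~(iii) are used: they give ample room to select the auxiliary vertices and intra-part edges needed for the detour without blowing the edge budget or losing vertex-disjointness.
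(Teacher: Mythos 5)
The paper takes a global counting approach and never attempts to locate a single short cycle in a reduced digraph: it defines $H_0$ on $[k]$ with $ij\in E(H_0)$ when $e(G_{ij})\ge 3$, shows $H_0$ has no directed cycle of length $\ge 3$ (Claim~7.2) and that $e(G_{ij})\le 2$ forces $e(G_{ji})\le 2k^2$ (Claim~7.3), then exploits the resulting forest structure of the ``both directions dense'' subgraph $H^*$ together with the crucial observation that for $ij\in E(H^*)$ all of $E(G_{ij})\cup E(G_{ji})$ passes through a single vertex $w_{ij}$ (else a two-edge non-trivial balanced path system would exist), which gives $e(G_{ij})+e(G_{ji})\le (2-1/k)d$ via hypothesis~(iii). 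These bounds are fed into an edge-counting argument over small parts $V_i$ that contradicts~(ii). Your approach is genuinely different in spirit: you try to find one short cycle in an auxiliary digraph $R$ and lift it explicitly.

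There are two concrete gaps. First, the claim that $R$ is Eulerian (equal in- and out-degree at every vertex) is false: regularity of $G$ gives $\sum_j e(G_{ij})=\sum_j e(G_{ji})$, but not that the \emph{number} of $j$ with $e(G_{ij})>0$ equals the number with $e(G_{ji})>0$; for instance $V_1$ may send $100$ edges to $V_2$ and receive $50$ each from $V_3$ and $V_4$, giving out-degree $1$ and in-degree $2$ at vertex $1$ of~$R$. What regularity does give is that every non-isolated vertex of $R$ has positive in- and out-degree (so $R$ contains a cycle), but you cannot conclude that $R\setminus E(C)$ is Eulerian or even that it contains a cycle at all. To fix this you would have to redefine $R$ as a multidigraph with multiplicity $e(G_{ij})$, which is indeed Eulerian, but then lifting a cycle that uses an $R$-edge with high multiplicity may require several pairwise vertex-disjoint $G$-edges between the same pair of parts, and you have not argued those exist.

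Second, and more seriously, Subcase~B — which you flag as ``the main obstacle'' — is not resolved, and the sketch as written does not close. If the alternative cycle $C'$ in $R\setminus E(C)$ is disjoint from $C$ and \emph{also} rigid, using it ``on its own'' again produces a single $G$-cycle, not a path system, and using $C\cup C'$ produces two vertex-disjoint $G$-cycles, still not a path system. Iterating (``find yet another cross edge'') needs either a well-founded induction that terminates, or a global count showing it cannot always be rigid; you have provided neither. The combined-walk variant has the same difficulty, plus a length budget problem: you assert a closed walk of length at most $k$ using an edge outside $E(C)$ exists, but $|E(C)|+|E(C')|$ can exceed $k$. Telling is that hypothesis~(iii) and the quantitative assumption $d>165k^5$ play no role whatsoever in Subcases~A and the non-rigid lift — in the paper these are exactly what control the cross-edge counts and drive the contradiction. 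The gap is precisely the missing global control on cross edges between parts (the analogue of Claims~7.2, 7.3 and the $(2-1/k)d$ bound), without which the local cycle-chasing cannot be made to terminate.
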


	Before we can prove Lemma~\ref{lma:balancingpathsystemcombined}, we need a good lower bound in oriented graphs for $|\bigcup_{i \in I} \left( V_{i*} \cup V_{*i} \right)|$ for any connected component~$I$ in~$S(\mathcal{P})$; see Proposition~\ref{prop:orientedcomponent}.
	For this, we  use the following result on the minimum semi-degree threshold for an oriented graph to be a robust outexpander.
	Recall that the definition of robust outexpander is given immediately after Remark~\ref{rem:remark1}.
	
	\begin{lemma}[{\cite[Lemma~13.1]{On-Kelly-Conjecture}}]\label{lem:oriented-3-over-8-robust-expander}
		Let $1/n \ll \nu \ll \tau \leq \varepsilon/2 \leq 1$.
		Then every oriented graph~$G$ on $n$ vertices with $\delta^0(G)\ge (3/8+\varepsilon)n$ is a robust $(\nu,\tau)$-outexpander. 
	\end{lemma}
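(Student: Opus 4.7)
I proceed by contradiction: assume there exists $S \subseteq V(G)$ with $\tau n \le |S| \le (1-\tau)n$ such that $T := RN^+_{\nu,G}(S)$ satisfies $|T| < |S| + \nu n$. Write $a = |S|$, $b = |T|$, $s = |S \cap T|$, and $y = |V \setminus (S \cup T)| = n - a - b + s$. The plan rests on two oriented-graph edge bounds: the loose $e(A, B) \le |A||B| - |A \cap B|^2/2$ and the sharper $e(A,B) + e(B,A) \le |A||B| - |A \cap B|$, both of which follow from the fact that at most $\binom{|A \cap B|}{2}$ edges lie within $A \cap B$ in an oriented graph (together with the routine bookkeeping of pairs across the parts $A \setminus B$, $B \setminus A$, and $A \cap B$).

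By definition of $T$, every vertex in $V \setminus T$ has fewer than $\nu n$ in-neighbours in $S$, so $e(S, V \setminus T) < \nu n (n-b)$. Combining with $e(S, V) \ge (3/8+\varepsilon)na$ and the loose bound on $e(S,T)$, and then using $b < a + \nu n$, yields
\begin{align*}
\tfrac{s^2}{2} &\le ab - (3/8+\varepsilon)na + \nu n^2. \tag{I}
\end{align*}
An analogous calculation using $\delta^-(G) \ge (3/8+\varepsilon)n$ on the in-neighbourhoods of $V \setminus T$ produces
\begin{align*}
\tfrac{y^2}{2} &\le (n-a)(n-b) - (3/8+\varepsilon)n(n-b) + \nu n^2. \tag{II}
\end{align*}
Since the left-hand sides of (I) and (II) are non-negative, so must be their right-hand sides; combined with $b < a + \nu n$ and the hierarchy $\nu \ll \tau \le \varepsilon/2$, this already forces $a \in \bigl[(\tfrac{3}{8}+\tfrac{\varepsilon}{2})n,\ (\tfrac{5}{8}-\tfrac{\varepsilon}{2})n\bigr]$.

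To close the argument and rule out this middle band, I would case-split on $y$. When $y$ is small (say $y \le \varepsilon n/4$), summing the in-degree condition over $S$ gives $e(T \setminus S, S) \ge (3/8+\varepsilon)na - e(S, S) - e(Y, S) \ge (3/8+\varepsilon)na - a^2/2 - O(yn)$, after which the sharper oriented bound $e(S,T) + e(T,S) \le ab - s$ yields $(3/4 + 2\varepsilon)na - 3a^2/2 \le O(\nu n^2 + yn)$; evaluated at $a$ in the forbidden range this gives left-hand side at least $\Omega(\varepsilon)n^2$ while the right-hand side is $o(\varepsilon n^2)$, a contradiction. \textbf{The main obstacle} is the complementary case $y \ge \varepsilon n/4$: here in-degree mass spreads into $V \setminus (S \cup T)$, so the sharper oriented bound cannot be invoked cleanly. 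In that case I would play (II) against (I), observing that a large $y$ forces $a$ strictly away from $5n/8$, and iterate a symmetric analysis (with $V \setminus T$ in the role of $S$) to push $a$ out of the middle band entirely. This iteration exploits the tightness of the $\tfrac{3}{8}$-threshold, mirroring the extremal bipartite-like configurations that Keevash--K\"uhn--Osthus ruled out for Hamiltonicity in oriented graphs.
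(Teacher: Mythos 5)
First, a point of reference: the paper contains no proof of this statement — it is imported verbatim as Lemma~13.1 of K\"uhn--Osthus \cite{On-Kelly-Conjecture} — so your attempt has to stand as a complete self-contained argument, and as written it does not. The setup (inequalities (I) and (II), the oriented-graph edge bounds, and the deduction that a violating set must have $a=|S|\in[(\tfrac38+\tfrac{\varepsilon}{2})n,(\tfrac58-\tfrac{\varepsilon}{2})n]$) is fine, but both halves of your case split have problems. In the case $y\le\varepsilon n/4$, the contradiction you claim is numerically wrong on part of the band: the quantity $(\tfrac34+2\varepsilon)na-\tfrac32 a^2$ vanishes at $a=(\tfrac12+\tfrac{4\varepsilon}{3})n$ and is \emph{negative} for larger $a$, so for $a$ roughly between $n/2$ and $\tfrac58 n$ (well inside your ``forbidden range'') the inequality $(\tfrac34+2\varepsilon)na-\tfrac32 a^2\le O(\nu n^2+yn)$ holds trivially and yields no contradiction; the assertion that the left-hand side is at least $\Omega(\varepsilon)n^2$ throughout the band is simply false. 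So even the case you present as closed is open precisely in the regime $|S|>n/2$, which is where the tightness of the $3/8$ threshold lives.

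Second, the case $y\ge\varepsilon n/4$, which you yourself flag as the main obstacle, is not argued at all: ``play (II) against (I)'' and ``iterate a symmetric analysis \ldots to push $a$ out of the middle band'' is a plan, not a proof, and it is not clear that iterating your two quadratic inequalities can succeed, since (as the previous paragraph shows) they already fail to exclude configurations with $a$ slightly above $n/2$. The actual proof of K\"uhn--Osthus closes these regimes by a finer case analysis (in particular distinguishing how $RN^+_{\nu,G}(S)$ meets $S$ and treating $|S|\le n/2$ and $|S|>n/2$ separately, with sharper use of the digon-free condition) rather than by the two global counts you set up. As it stands, your proposal does not establish the lemma; for the purposes of this paper the statement should simply be cited from \cite{On-Kelly-Conjecture}, or else the missing cases must be worked out in full.
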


	\begin{proposition} \label{prop:orientedcomponent}
		Let $1/ n \ll \gamma \ll \alpha, 1/k$. 
		Let $G$ be an  oriented graph on~$n$ vertices with $\Delta^0(G)\le d$ where $ d \ge \alpha n$.
		Let $\mathcal{P}=\{V_{ij}:i,j\in [k]\}$ be a partition of~$V(G)$ such that $|V_{i*}| \ge  d/2$ for all $i \in [k]$. 
		Suppose that, for all $i,j \in [k]$, all but at most $\gamma n$ vertices $x\in V_{ij}$ satisfy $d^+(x, V_{*i}), d^-(x, V_{j*})  \ge d - \gamma n $.
		Let $I \subseteq S(\mathcal{P})$ be a connected component. 
		Then
		\begin{align*} 
			\left| \bigcup_{i \in I } (V_{i*} \cup V_{*i}) \right| \ge 
			\begin{cases} 
				2(d -  \gamma n) & \text{if $|I| =1$,}\\
				9(d - 5\gamma n)/4 & \text{otherwise.}
			\end{cases}
		\end{align*}	
	\end{proposition}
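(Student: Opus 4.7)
The plan is to first use that $I$ is a connected component of $S(\mathcal{P})$ to deduce $V_{ij} = V_{ji} = \emptyset$ whenever $i \in I$ and $j \notin I$, so for each $i \in I$ we have $V_{i*}, V_{*i} \subseteq U := \bigcup_{i \in I}(V_{i*} \cup V_{*i})$, and the sets $(V_{*i})_{i \in I}$ are pairwise disjoint (different second index) with union $U$, giving $|U| = \sum_{i \in I} |V_{*i}|$. I would then upgrade the hypothesis $|V_{i*}| \ge d/2$ to $|V_{*i}| \ge d - \gamma n$ for every $i \in I$: the hypothesis forces some $V_{ij}$ (necessarily with $j \in I$) to have size $\ge |V_{i*}|/k \ge d/(2k) > \gamma n$ (using $\gamma \ll 1/k, \alpha$), so the degree assumption produces a good $v \in V_{ij}$ witnessing $|V_{*i}| \ge d^+(v, V_{*i}) \ge d - \gamma n$.

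With these preliminaries the cases $|I| = 1$ and $|I| \ge 3$ are short. If $|I| = 1$, say $I = \{i\}$, then $V_{i*} = V_{*i} = V_{ii}$, and for a good $v \in V_{ii}$ the oriented-graph property forces the two guaranteed neighbourhoods $N^+(v) \cap V_{ii}$ and $N^-(v) \cap V_{ii}$ to be disjoint, giving $|V_{ii}| \ge 2(d - \gamma n) + 1$. If $|I| \ge 3$, then $|U| \ge 3(d - \gamma n)$ already exceeds $9(d - 5\gamma n)/4$ by a routine comparison.

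I expect the main obstacle to be the case $|I| = 2$, since the trivial bound $|U| = |V_{*1}| + |V_{*2}| \ge 2(d - \gamma n)$ falls short of $9d/4$ by a full $d/4$. Writing $I = \{1, 2\}$ with $V_{12} \ne \emptyset$ by symmetry, and $a, b, c, e$ for $|V_{11}|, |V_{12}|, |V_{21}|, |V_{22}|$, I would split on the size of $b$. If $b \ge d/4$, a good $v \in V_{12}$ has guaranteed neighbourhoods $N^+(v) \cap V_{*1}$ and $N^-(v) \cap V_{2*}$ each of size $\ge d - \gamma n$ that are disjoint by orientedness, so $|V_{*1} \cup V_{2*}| \ge 2(d - \gamma n)$; since $V_{*1} \cup V_{2*} = U \setminus V_{12}$, this gives $|U| \ge 2(d - \gamma n) + d/4$. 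If $b < d/4$, then $|V_{1*}| \ge d/2$ forces $a \ge d/4 > \gamma n$, and the same oriented-disjointness trick applied to a good $w \in V_{11}$ yields $|V_{*1} \cup V_{1*}| \ge 2(d - \gamma n) + 1$; combining with the preliminary bound $|V_{*2}| = b + e \ge d - \gamma n$ (so $e \ge 3d/4 - \gamma n$) and the fact that $V_{22}$ is disjoint from $V_{*1} \cup V_{1*}$, we get $|U| \ge (2(d - \gamma n) + 1) + (3d/4 - \gamma n) = 11d/4 - 3\gamma n + 1$. Either subcase comfortably exceeds $9(d - 5\gamma n)/4$; the oriented-disjointness observation, which is what allows the lower bound $2(d-\gamma n)$ on $|V_{*i} \cup V_{j*}|$ rather than just $d - \gamma n$, is the crucial ingredient that makes this extra $d/4$ available.
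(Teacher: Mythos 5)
Your proof is correct, and it takes a genuinely different route from the paper's for the only difficult case, $|I|=2$. The cases $|I|=1$ and $|I|\ge 3$ are handled the same way (find a good vertex, use orientedness to get $2(d-\gamma n)$, or simply sum three lower bounds on $|V_{*i}|$), and your ``preliminary'' upgrade from $|V_{i*}|\ge d/2$ to $|V_{*i}|\ge d-\gamma n$ via pigeonhole is the same step the paper performs implicitly. For $|I|=2$, however, the paper cleans up the two relevant parts to a set $W$ with $\delta^0(G[W])\ge d-5\gamma n$, shows directly that $G[W]$ is \emph{not} a robust outexpander (because there are few edges from $W_{1*}$ to $W_{*2}$), and then invokes the K\"uhn--Osthus threshold (Lemma~\ref{lem:oriented-3-over-8-robust-expander}, $\delta^0\ge (3/8+\varepsilon)n$ implies robust outexpansion) to deduce $\delta^0(G[W])\le 4|W|/9$; this is where the constant $9/4$ comes from. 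You instead split on $b=|V_{12}|$: when $b\ge d/4$ you apply the oriented-disjointness of $N^+(v)\cap V_{*1}$ and $N^-(v)\cap V_{2*}$ for a good $v\in V_{12}$, noting these live in $U\setminus V_{12}$, and when $b<d/4$ you force $|V_{11}|\ge d/4$ and use the same trick inside $V_{*1}\cup V_{1*}$, adding back $|V_{22}|\ge 3d/4-\gamma n$. Both subcases comfortably beat $9(d-5\gamma n)/4$ (you in fact get $9d/4-2\gamma n$ and more than $11d/4-3\gamma n$, respectively). The paper's approach is conceptually tidier and reuses machinery already present; yours is elementary, self-contained, and actually gives slightly better constants, at the cost of a small amount of case analysis.
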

	
	\begin{proof}
		Without loss of generality, let $I = [\ell]$. 
		For each $i \in [\ell]$, we can find vertices $ v_i \in V_{i*}$ such that $d^+(v_i, V_{*i})\ge d - \gamma n $ as $\gamma \ll \alpha,1/k$, which shows in particular that $|V_{*i}|\geq d-\gamma n$.
		Hence, if $\ell \ge 3$,
		\begin{align*}
			\left|\bigcup_{i \in I } (V_{i*} \cup V_{*i})\right|
			\ge  \sum_{i \in [\ell] }  | V_{*i}|  \ge \ell (d - \gamma n)  \ge 3(d-\gamma n)\ge 9(d-5\gamma n)/4.
		\end{align*}

		If $\ell = 1$, then $V_{11} = V_{1*} = V_{*1}$. 
		There exists a vertex $v \in V_{11}$ such that  $d^+(v, V_{11}),  d^-(v, V_{11}) \ge d - \gamma n $. 
		Since $G$ is an oriented graph, we have $|V_{11}| \ge d^+(v, V_{11}) +  d^-(v, V_{11}) \ge 2(d - \gamma n)$. 
		
		If $\ell = 2$, then $\bigcup_{i \in [2] } V_{i*} = \bigcup_{i \in [2] } V_{*i}$ and $V_{ij} = V_{ji} = \emptyset$ for all $(i,j) \in [2] \times ([k] \setminus [2])$. 
		Let $V_I = \bigcup_{i \in [2] } V_{i*}=\bigcup_{i \in [2] } V_{*i}$.
		There exists a vertex subset $W  \subseteq V_I$ such that
		\begin{align*}
			|W| \ge |V_I| - 4 \gamma n \ge 2(d-\gamma n)-4\gamma n \ge \alpha n,
		\end{align*}
		and such that, writing $W_{ij} = W \cap V_{ij}$ for $i,j \in [2]$, we have for all $i,j \in [2]$ that
		\begin{align}
			\label{eq:degW}
			d^+(x, W_{*i}),   d^-(y, W_{j*}) \ge d - 5\gamma n  \text{ for all } x \in W_{i*} \text{ and } y \in W_{*j}.
		\end{align}
		Hence $\delta^0( G[W] ) \ge d - 5\gamma n$.

		Let $\tau$ be a constant with $\gamma \ll \tau \ll \alpha$. Next we show that $G[W]$ is not a robust $( \gamma^{1/3} , \tau)$-outexpander.
		For $i \in [2]$, \eqref{eq:degW}  implies that $|W_{i*}| \geq d - 5 \gamma n  \ge \tau n \ge \tau |W|$ and so $\tau |W| \leq |W_{i*}| \leq (1 - \tau)|W|$. 
		Since $W_{1*} \cup W_{2*}  = W_{*1} \cup W_{*2}$, we may assume without loss of generality that $|W_{1*}| \geq |W_{*1}|$.
		Recall that $\Delta^0(G) \le d$. 
		By~\eqref{eq:degW}, each vertex in $|W_{1*}|$ has at most $5\gamma n$ outneighbours in~$W_{*2}$, so 
		\begin{align*}
			|{\rm RN}_{\gamma^{1/3}}^+(W_{1*}) \cap W_{*2}| 
			\leq \frac{e_G(W_{1*}, W_{*2})}{ \gamma^{1/3} |W|} 
			\le \frac{5 \gamma n^2} {\alpha \gamma^{1/3} n } 
			< \gamma^{1/3} \alpha n 
			\le \gamma^{1/3} |W|,
		\end{align*}
		where the penultimate inequality holds as $\gamma \ll \alpha$. 
		This implies that 
		\begin{align*}
			|{\rm RN}_{\gamma^{1/3}}^+(W_{1*})|
			\le |W_{*1}| + |{\rm RN}_{\gamma^{1/3}}^+(W_{1*}) \cap W_{*2}|
			< |W_{1*}| + \gamma^{1/3} |W|.
		\end{align*}
		Hence $G[W]$ is not a robust $( \gamma^{1/3}, \tau)$-outexpander as claimed. 
		Lemma~\ref{lem:oriented-3-over-8-robust-expander} with $\varepsilon=5/72$ implies 
		$\delta^0( G[W] ) \leq (3/8 + \varepsilon )|W| = 4|W|/9$. 
		Then $4|W|/9 \geq d - 5\gamma n$, and the result follows.
	\end{proof}

	We now prove Lemma~\ref{lma:balancingpathsystemcombined} assuming Lemmas~\ref{lem:balancedpath} and~\ref{lma:regdigraph}.
	
	\begin{proof}[Proof of Lemma~\ref{lma:balancingpathsystemcombined}]
		Let $G$ and $\mathcal{P} = \{V_{ij}: i,j \in [k]\}$ be as in the statement of Lemma~\ref{lma:balancingpathsystemcombined}. 
		We apply Lemma~\ref{lem:balancedpath} (with $G^0 = G$) and obtain a $\mathcal{P}$-balanced path system~$\mathcal{Q}^0$ such that $e(\mathcal{Q}^0) \le k^2  \gamma n $. 
Let $\mathcal{P}^{0}$ be the $\mathcal{Q}^0$-contracted partition of $\mathcal{P}$.
		Let $k^*$ be the smallest integer larger than $n/(qd+1)$, that is, $k^* = \lfloor n/(qd+1) \rfloor +1 $.
		If $S(\mathcal{P}^{0})$ has at most $n/(qd+1)$ connected components, then we are done by setting $\mathcal{P}' = \mathcal{P}$ and $\mathcal{Q}= \mathcal{Q}^0$ since $e(\mathcal{Q}^0) \le k^2  \gamma n\le \gamma^{1/2} n $.
		Hence we may assume that $S(\mathcal{P}^{0})$ has at least $k^*$ connected components.

		\begin{claim}\label{clm:S(P)-empty}
			We have $k^* = k$ and $S(\mathcal{P}^{0})$ is an empty graph on~$[k^*]$. 
		\end{claim}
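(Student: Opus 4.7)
The number of components of $S(\mathcal{P}^{0})$ is at most $k$ (its vertex set is $[k]$), so the standing hypothesis that the number of components is at least $k^*$ immediately yields $k \ge k^*$. The plan is to rule out any non-singleton component via a volume count, which forces $S(\mathcal{P}^{0})$ to be edgeless with exactly $k$ components, and then to derive the matching bound $k \le k^*$ from the resulting lower bound on $|V_{ii}^{0}|$.

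For each connected component $I$ of $S(\mathcal{P}^0)$ set $W_I := \bigcup_{i \in I}(V_{i*}^0 \cup V_{*i}^0)$. As noted in the excerpt, distinct $W_I$ are disjoint and together partition the vertex set of the $\mathcal{Q}^0$-contracted subgraph $\widetilde G$, so $\sum_I |W_I| \le n$. I lower-bound $|W_I|$ as follows. A singleton $I = \{i\}$ has $W_I = V_{ii}^0$; using Lemma~\ref{lma:balancingpathsystemcombined}(iii) together with the fact that contracting $\mathcal{Q}^0$ (with $e(\mathcal{Q}^0) \le k^2 \gamma n$) perturbs each part by at most $O(k^2 \gamma n)$ vertices, we get $|W_I| \ge d - O(k^2 \gamma n)$ in the digraph case; for oriented graphs the singleton case of Proposition~\ref{prop:orientedcomponent} applied to the contracted pair $(\widetilde G, \mathcal{P}^0)$ (with slightly worsened parameter $\gamma' = \Theta(k^2\gamma)$, justified by the same perturbation estimates) improves the bound to $|W_I| \ge 2d - O(k^2 \gamma n)$. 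For a non-singleton $I$, in the digraph case summing $|V_{i*}^0|$ over any two distinct $i \in I$ (disjoint parts of the row partition) gives $|W_I| \ge 2d - O(k^2 \gamma n)$, while in the oriented case the corresponding clause of Proposition~\ref{prop:orientedcomponent} gives $|W_I| \ge 9d/4 - O(k^2\gamma n)$. Writing $q$ as in the statement, a singleton thus contributes at least $qd - O(k^2\gamma n)$ and a non-singleton contributes at least $qd + d/4 - O(k^2\gamma n)$.

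Let $s$ be the total number of components of $S(\mathcal{P}^0)$ and $t$ the number of non-singleton ones. Summing the above estimates yields
\begin{align*}
n \;\ge\; s\cdot qd + t\cdot\tfrac{d}{4} - O(sk^2\gamma n),
\end{align*}
while on the other hand $k^* = \lfloor n/(qd+1)\rfloor + 1$ gives $n < k^*(qd+1) \le s(qd+1)$. Subtracting, $t d/4 \le s + O(sk^2\gamma n)$. Now Lemma~\ref{lma:balancingpathsystemcombined}(iii) gives $k(\alpha - \gamma)n \le k(d - \gamma n) \le n$ and hence $s \le k \le 2/\alpha$; so $t \ge 1$ would force $\alpha n /4 \le O(1/\alpha + k^3 \gamma n)$, contradicting $1/n \ll \gamma \ll 1/k, \alpha$ for $n$ large. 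Thus $t = 0$: every component of $S(\mathcal{P}^{0})$ is an isolated vertex, so $S(\mathcal{P}^{0})$ has no edges and its number of components equals $k$, giving $k \ge k^*$. For the reverse inequality, summing the (now-established) singleton bound over all $i \in [k]$ gives $n \ge k(qd - O(k^2\gamma n))$, so $k \le n/(qd - O(k^2\gamma n))$; a short computation using $d \ge \alpha n$ and $\gamma \ll \alpha$ shows this quantity is strictly less than $n/(qd+1) + 1 \le k^* + 1$, whence $k \le k^*$ since $k$ is an integer. Combining, $k = k^*$. The only non-routine ingredient is the justification that Proposition~\ref{prop:orientedcomponent} really applies to the contracted pair: contraction may create up to $O(k^2\gamma n)$ two-cycles, but this count, the degradation in the degree hypotheses, and the size perturbations are all absorbed into the enlarged error parameter $\gamma' = \Theta(k^2\gamma)$, and the $\Delta^0$-bound transfers without loss.
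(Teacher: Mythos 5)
Your overall strategy — lower bound $\sum_I |W_I|$ by $n$, with non-singleton components contributing strictly more than $qd$ per part, and extract a contradiction — is the same as the paper's for the oriented case, though the paper handles the digraph case $q=1$ with a much shorter argument (directly: $|V_{i*}| \ge d-\gamma n$ gives $n\ge k(d-\gamma n)$ so $k \le \lfloor n/(d-\gamma n)\rfloor \le k^*$; no Proposition~\ref{prop:orientedcomponent} needed there), and only runs the volume count in the oriented case.

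The genuine gap in your proposal is the claim that Proposition~\ref{prop:orientedcomponent} ``really applies to the contracted pair $(\widetilde G, \mathcal{P}^0)$ with enlarged error parameter $\gamma' = \Theta(k^2\gamma)$''. The contracted digraph $\widetilde G$ need not be an oriented graph: each contraction creates a vertex $w$ with $N^+(w) = N^+_G(v_-)\setminus V(Q)$ and $N^-(w) = N^-_G(v_+)\setminus V(Q)$, and any $u \in N^+_G(v_-)\cap N^-_G(v_+)\setminus V(Q)$ yields a $2$-cycle $uw$. Proposition~\ref{prop:orientedcomponent} uses orientedness in a substantive way, not merely as a degree condition: the $|I|=1$ bound rests on $|V_{11}| \ge d^+(v,V_{11})+d^-(v,V_{11})$, which requires $N^+(v)\cap N^-(v)=\emptyset$; and the $|I|=2$ bound invokes Lemma~\ref{lem:oriented-3-over-8-robust-expander}, the semi-degree $3/8+\varepsilon$ threshold for robust outexpansion, whose statement and proof are specific to oriented graphs. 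Saying ``the $\Delta^0$-bound transfers without loss'' does not touch this: the obstruction is not the magnitude of the degrees but the possibility of coincident in- and out-neighbours. The $|I|=1$ case can indeed be patched (the only offending neighbours of a surviving vertex $v$ are among the $\le e(\mathcal{Q}^0)$ new vertices, so the two-sided degree sum bound degrades only by $O(k^2\gamma n)$). But the $|I|=2$ case — which is exactly what supplies the extra $d/4$ you need to kill the non-singleton components — would require a version of Lemma~\ref{lem:oriented-3-over-8-robust-expander} for digraphs that are oriented away from a small vertex set, and you have not supplied that.

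The paper avoids the issue entirely: it applies Proposition~\ref{prop:orientedcomponent} to the \emph{original} oriented graph $G$ and original partition $\mathcal{P}$, using the components $I_1,\dots,I_{a+b}$ of $S(\mathcal{P}^0)$ to index the sum $n = \sum_j |\bigcup_{i\in I_j}(V_{i*}\cup V_{*i})|$. This is consistent because, for $i$ in a singleton (resp.\ small) component of $S(\mathcal{P}^0)$, the offending parts $V_{ij}, V_{ji}$ with $j\ne i$ satisfy $V_{ij},V_{ji}\subseteq V(\mathcal{Q}^0)$, of total size $O(k^2\gamma n)$, so the relevant degree and size hypotheses in $(G,\mathcal{P})$ degrade only by this amount and the $1/10$ slack absorbs it. You should switch to this route: stay with $(G,\mathcal{P})$ throughout the application of Proposition~\ref{prop:orientedcomponent}, and only pass to the $O(k^2\gamma n)$-perturbed contracted quantities at the end, where they are harmless.
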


		\begin{proofclaim}
			Since $1/n\ll \gamma \ll \alpha$ and $d \ge \alpha n$, we have that
			\begin{align}
                \dfrac{n}{q(d-\gamma n)}-\dfrac{n}{qd+1}\leq \dfrac{1}{10}\text{ and so }
				\left\lfloor \frac{n}{ q (d-\gamma n ) } \right\rfloor 
				\le  \left\lfloor \frac{n}{q d+1} \right\rfloor +1 =k^*.
				\label{eqn:k^*}
			\end{align}
			
			First suppose that $G$ is not oriented, so $q=1$. 
			Recall \ref{itm:balancingpathsystemcombined:5} that $|V_{i*}| \ge d - \gamma n $ for all $i \in [k]$, so $n\geq k(d-\gamma n)$. 
			Together with~\eqref{eqn:k^*}, we have $k \le \left\lfloor n/ (d-\gamma n) \right\rfloor \le k^*$.
			On the other hand, since $S(\mathcal{P}^{0})$ has at least $k^*$ connected components, we have $k  = | V( S(\mathcal{P}^{0}) ) | \ge k^*$.
			Thus $k = k^*$. 
			Furthermore, $S(\mathcal{P}^{0})$ is an empty graph on~$[k^*]$ (or else $S(\mathcal{P}^{0})$ would have fewer than $k^*$ connected components). 
			
			Next, suppose that $G$ is oriented, so $q=2$. 
			Let $I_1 , \dots ,  I_{a+b}$ be connected components of~$S(\mathcal{P}^{0})$ such that $|I_j| = 1$ if and only if $j \in [a]$. 
			Note that $ a+b \ge k^*$ and $\{ \bigcup_{i \in I_j } (V_{i*} \cup V_{*i}) : j \in [a+b] \}$ is a partition of~$V(G)$. Recall \ref{itm:NEW-DEGREE-CONDITION} that for all $i,j\in [k]$, all but at most $\gamma n $ vertices $x \in V_{ij}$ satisfy $d^+(x, V_{*i}), d^-(x, V_{j*})  \ge d - \gamma n $. Also, we have that $|V_{i*}|\geq d-\gamma n\geq d/2$ for all $i\in[k]$ as $\gamma \ll \alpha$.
			By Proposition~\ref{prop:orientedcomponent}, we have 
   
   			\begin{align*}
				n = \sum_{j \in [a+b]} \left| \bigcup_{i \in I_j } (V_{i*} \cup V_{*i}) \right| 
				\ge 2(d-\gamma n) a + \frac{9(d-5\gamma n)}4 b 
				=2(d-\gamma n)(a+b)+\dfrac{d-37\gamma n}{4}b.
			\end{align*}
		Hence \eqref{eqn:k^*} implies that 
   \begin{align*}
   \dfrac{1}{10}\geq \dfrac{n}{2(d-\gamma n)}-\dfrac{n}{2d+1}>\dfrac{n}{2(d-\gamma n)}-k^*\geq (a+b-k^*)+\dfrac{b(d-37\gamma n)}{8(d-\gamma n)}>(a+b-k^*)+\dfrac{b}{9},
   \end{align*} where the last inequality holds as $\gamma \ll \alpha$. Recalling that $a+b \geq k^*$, this shows that $a+b=k$ and $b = 0 $, and so $ a = k^*$, proving the claim.
		\end{proofclaim}
		
		Since $S(\mathcal{P}^{0})$ is an empty graph on~$[k^*]$, we deduce that $\mathcal{Q}^0$ is trivial as defined at the start of the section. 
		By the moreover statement of Lemma~\ref{lem:balancedpath}, we have 
  
		\begin{enumerate}[label={\rm ($\overline{\text{m}\arabic*}$)}]
			\item $\sum_{i,j \in [k] \colon i \ne j} |V_{ij}| \le k^2\gamma n \le \gamma^{1/4} n$ and 
			\label{itm:m1(int)}
			\item  for all distinct $i,j \in [k]$ and $ v \in V_{ij}$, we have $ d^+(v,V_{*i}) - d^-(v, V_{i *}) \le 100k^{12} \gamma^{1/3}d  \le \gamma^{1/4} n$.
			\label{itm:m2(int)}
		\end{enumerate}
		By Proposition~\ref{prop:balancepartition} (with $\gamma^{1/4}$ playing the role of $\gamma$), there exists a partition $\mathcal{P}' = \{V_{ii}':i \in [k]\}$ of~$V(G)$ (and we take $V_{ij}' = \emptyset$ for $i \not= j$) such that, for all~$i \in [k]$, 
		\begin{enumerate}[label ={\rm(\roman*$'$)}]
			\item  $|V_{ii} \triangle V'_{ii}| \le \gamma^{1/4} n $;
			\label{itm:symdiff}
			\item  $\delta^0( G [ V_{ii}' ] ) \ge (d/k) - \gamma^{1/4} n $. 
			\label{itm:deg}
		\end{enumerate}
		Finally, apply Lemma~\ref{lma:regdigraph} (with $\mathcal{P}'$ playing the role of $\mathcal{P}$) by noting that property~\ref{item:each-part-is-at-least-d-over-2} of Lemma~\ref{lma:regdigraph}  holds by property~\ref{itm:balancingpathsystemcombined:5} of Lemma~\ref{lma:balancingpathsystemcombined} together with \ref{itm:m1(int)} and \ref{itm:symdiff} above, and properties \ref{item:relation-n-d-k} and \ref{item:degree-is-large-inside-diagonal} of Lemma~\ref{lma:regdigraph} hold by Claim~\ref{clm:S(P)-empty} and \ref{itm:deg} above, respectively.
		Thus, in $G$, we obtain a non-trivial $\mathcal{P}'$-balanced path system~$\mathcal{Q}$ with $ e (\mathcal{Q}) \le k \leq \gamma^{1/2}n$. 
		Also, if $\mathcal{P}^*$ is the $\mathcal{Q}$-contracted partition of~$\mathcal{P}'$, then by Claim~\ref{clm:S(P)-empty}, $S(\mathcal{P}^*)$ has at most $k-1=k^* - 1 \leq n/(qd + 1)$ connected components since $\mathcal{Q}$ is non-trivial. This gives \ref{itm:balancingpathsystemcombined:c} and \ref{itm:balancingpathsystemcombined:d} in Lemma~\ref{lma:balancingpathsystemcombined}, while \ref{itm:m1(int)} and \ref{itm:symdiff} imply \ref{itm:balancingpathsystemcombined:a}, and \ref{itm:deg} implies  \ref{itm:balancingpathsystemcombined:b}.
	\end{proof}

	%%%%%%%%%%%%%%%%%%%%%%%%%%%%%%%%%%%%%%%%%%%%%%%%%%%%%%%%%%%%%%%%%%%%%%%%%%%%%%%%%%%%%%%%%%%%%%%%%%%%%%%
	
	\section{Proof of Lemma~\ref{lem:balancedpath}}
	\label{sec:pf_balancedpath}

	Recall that in Lemma~\ref{lem:balancedpath}, we are given a $d$-regular digraph $G^0$ on $n$ vertices together with a partition $\mathcal{P}=\{V_{ij}:i,j\in[k]\}$ of $V(G^0)$ satisfying certain size and degree conditions, and we must find a $\mathcal{P}$-balanced path system $\mathcal{Q}$ with few edges. Moreover, we require that $\mathcal{Q}$ is non-trivial under certain circumstances. We begin with a sketch of our proof approach for this lemma (ignoring the moreover part for now). Recall the definitions at the start of Sections \ref{sec:longcycles} and \ref{sec:Path-Contraction}.
	
	Let $V^+$ and $V^-$ be two disjoint copies of~$V(G^0)$. 
	Let $V_{1*}^+, \dots, V_{k*}^+$ to be the partition of~$V^+$ corresponding to $V_{1*}, \dots, V_{k*}$ in~$V(G^0)$.
	Similarly, let $V_{*1}^-, \dots, V_{*k}^-$ to be the partition of~$V^-$ corresponding to $V_{*1}, \dots, V_{*k}$ in~$V(G^0)$.
	Let $H$ be the bipartite digraph with bipartition $V^+, V^-$ such that for each $x \in V^+_{i*}$ and $y \in V^-_{*j}$, we have $xy \in E(H)$ if and only if $xy \in E(G^0)$ and $i \ne j$. 
	So there is a bijection between $E(H)$ and $E(\mathcal{B}( G^0, \mathcal{P}))$.

	A naive approach is to find, for each $i, j \in [k]$ with $i \ne j$, a matching~$M_{ij}$ in~$H[V_{i*}^+, V^-_{*j}]$ with $e(M_{ij}) = e(G^0_{ij})/d$.
	If $Q_{ij}$ is the subgraph in $G^0$ corresponding to~$M_{ij}$, 
	then $Q = \bigcup_{i,j \in [k]} Q_{ij}$ is $\mathcal{P}$-balanced by construction since $G^0$ is $d$-regular. 
	It is relatively easy to guarantee $Q$ has few edges by first passing from $G^0$ to a suitable subdigraph~$G$ (see Proposition~\ref{prop:subgraph}). 
	However, there are three problems with this approach: (i) $\Delta^0(Q)$ might be greater than one (meaning that $Q$ is not a path system), (ii) $e(G^0_{ij})/d$ may not be an integer, and (iii) $Q$ may contain cycles, even if $\Delta^0(Q) \leq 1$. 
	
	To overcome (i) and~(ii), we consider a suitable flow problem by converting~$H$ to a network~$\mathcal{F}^* = (F^*,w, s^*, t^*)$ as follows 
	(the formal definition of a network and flow are stated in Section~\ref{subsect:flows}).
	Starting with the graph $H$ (with all edges of~$H$ having capacity 1), we add new vertices $s^*, s_1, \dots, s_k, t^*, t_1, \dots, t_k$, where $s^*$ and~$t^*$ are the source and sink respectively and where the $s_i$ and~$t_j$ are viewed as `local' sources and sinks, respectively. 
	For each $i,j \in [k]$ and each $x^+ \in V^+_{i*}$, $y^- \in V^-_{*j}$, we add the edges $s_i x^+$ and $y^- t_j$, each of capacity~$1$.
	For each $i \in [k]$, we add the edge $s^* s_i$ of capacity $\max \{|V_{i*}| - |V_{*i}|, 0\}$, the edge $t_i t^*$ of capacity $\max \{|V_{*i}| - |V_{i*}|, 0\}$ and the edge $t_i s_i$ of infinite capacity.
	This gives the network $\mathcal{F}^*$; see Figure~\ref{fig:F*}.
	Consider a maximum integer flow~$f$ for~$\mathcal{F}^*$. 
	Define $Q$ to be the subdigraph of~$\mathcal{B}(G^0, \mathcal{P})$ such that $xy \in E(Q)$ if and only if $x^+ y^-$ has a flow of one in~$f$, where $x^+$ and $y^-$ are the corresponding copies of $x$ and~$y$ in~$V^+$ and~$V^-$, respectively. 
	Note that $\Delta^0(Q) \le 1$ by construction. Also it is easy to check that $Q$ is $\mathcal{P}$-balanced provided all edges at $s^*$ (and hence also at $t^*$) are saturated by~$f$.
	
	To guarantee this latter condition on $f$, it turns out that it is enough to find a fractional flow $f^*$ such that $f_{ij}^*$, the total amount of flow in $f^*$ through the edges in $H[V_{i*},V_{*j}]$, is roughly $e(G^0_{ij})/d$ for all $i\neq j$. Such a fractional flow is almost a maximum flow, and we can use the max-flow min-cut theorem to convert this fractional flow into an integer flow that saturates the edges at $s^*$ (and $t^*$); see~Claim~\ref{clm:fractionalflow}. This also addresses~(ii) above.

	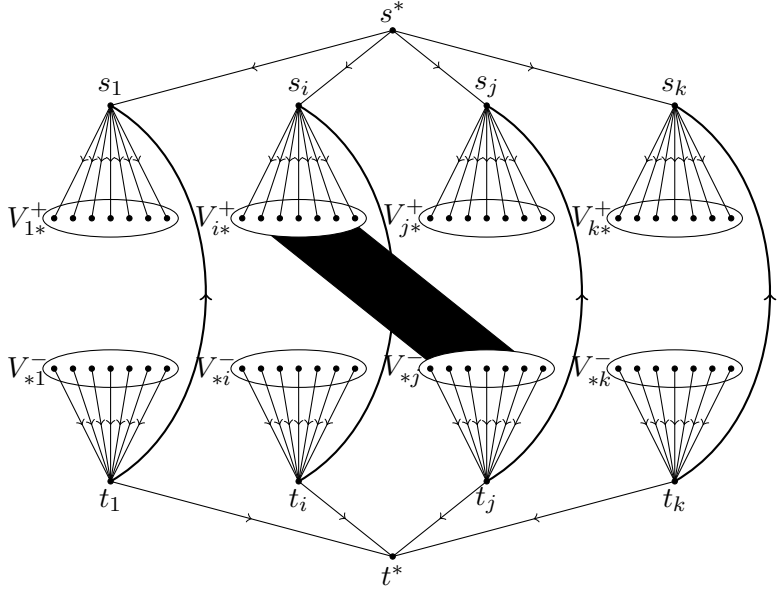
\begin{figure}[!htbp]
		\begin{center}
			\begin{tikzpicture}[scale=0.5,decoration={
					markings, 
					mark=at position 0.5 with {\arrow{>}}}
				]
				\draw[fill = black, opacity=0.3] ( -1.2,2) -- (-3.8,2)--( 1.2,-2) -- (3.8,-2)--(- 1.2,2);
				\node at (0,0) [rotate =-45 ] {$G^0[V_{i*}, V_{*j}]$};
				
				\foreach \i/\j in {-7.5/1,-2.5/i,2.5/j,7.5/k}
				{   
					\begin{scope}    	
						\draw[postaction={decorate}] (0,7) --(\i,5) ;
						\draw[postaction={decorate}] (\i,-5) -- (0,-7) ;
					\end{scope}    
					
					\begin{scope}[xshift= {\i cm} ] 
						
						\draw[fill = white] (0,2) ellipse (1.8 cm and 0.5 cm);
						\draw[fill = white] (0,-2) ellipse (1.8 cm and 0.5 cm);
						\filldraw (0,5) circle (2pt);
						\node at (0,5.5) {$s_{\j}$};
						\filldraw (0,-5) circle (2pt);
						\node at (0,-5.5) {$t_{\j}$};
						\node at (-2.2,2) {$V^+_{\j *}$};
						\node at (-2.2,-2) {$V^-_{* \j }$};
						\foreach \x in { 1.5 , 1.0 ,..., -1.7}
						{
							\filldraw (\x,2) circle (2pt);
							\draw[postaction={decorate}] (0,5) -- (\x,2);
							\filldraw (\x,-2) circle (2pt);
							\draw[postaction={decorate}] (\x,-2) --  (0,-5);
						}
						\draw[thick, postaction={decorate}] (0,-5) to [out = 30,in=-30]  (0,5) ;
					\end{scope}
				}
				
				\filldraw (0,7) circle (2pt);
				\node at (0,7.5) {$s^*$};
				\filldraw (0,-7) circle (2pt);
				\node at (0,-7.5) {$t^*$};

			\end{tikzpicture}
		\end{center}
		\caption{The network $\mathcal{F}^*$}
		\label{fig:F*}
	\end{figure}
	
	To deal with~(iii), suppose we could find a matching~$M^0$ of~$G^0$ such that $e(M^0[V_{i*},V_{*j}]) \ge e(G^0_{ij})/d$ for all $i,j \in [k]$. Then defining~$H$ (and $\mathcal{F}^*$) using~$M^0$ instead of~$G^0$, we could apply the same argument as before to obtain a $\mathcal{P}$-balanced subgraph $Q \subseteq M^0$ (which is therefore necessarily a path system as required). 
	However, it is not always possible to find such a matching, so instead, for each~$i,j$, we find a large ``extendable mathcing'' in $G^0_{ij}$ which consists of a matching $M_{ij} \subseteq G^0_{ij}$ together with sets of vertices $X_{ij}^+ \subseteq V_{i*}$ and $X_{ij}^- \subseteq V_{*j}$ where each vertex $v^+ \in X_{ij}^+$ has high outdegree in $G^0_{ij}$ and each vertex $v^- \in X_{ij}^-$ has high indegree in~$G^0_{ij}$, and where $\bigcup_{i,j} M_{ij}$ is a matching (see Lemma~\ref{lem:matching}). Here the very large degrees of vertices in $X_{ij} = X_{ij}^+ \cup X_{ij}^-$ allow us to greedily extend $M_{ij}$ into a suitably large path system\footnote{When doing the greedy extension we obtain a path system (with paths of length at most 3) rather than a matching because we allow the start (resp.\ end) point of an edge in $M_{ij}$ to be in $X_{ij}^-$ (resp.\ $X_{ij}^+$). } (see Proposition~\ref{prop:extendingmatching}). These path systems would be disjoint (as required) if the $X_{ij}$ are disjoint, but a priori the $X_{ij}$ will not be disjoint. 
	Therefore, we must modify the $X_{ij}$ so that each vertex is assigned to at most one $X_{ij}^+$ and at most one $X_{ij}^-$. It turns out that this assignment problem is not too hard to incorporate into the flow problem described earlier (we simply add an edge of capacity $1$ from $v^+ \in V^+$ to $t_j$ if $v^+ \in X_{ij}^+$ and similarly if $v^- \in X_{ij}^-$). Solving the flow problem gives us the \emph{disjoint} ``extendable matchings'' we seek (meaning the $X_{ij}$'s are disjoint), which can greedily be extended to give the desired path system.

	\subsection{Flows}
	\label{subsect:flows}
	We recall some common definitions and facts about flow networks.
	We note that flows are only used in the proof of Lemma~\ref{lem:balancedpath} (and not in any preliminary results).

	A \emph{flow network} is a tuple $\mathcal{F} = (F,w,S,T)$, where $F$ is a digraph, $w\colon E(F) \to \mathbb{R}_{\geq 0}$ is the \emph{capacity} function, and $S \subset V$ is a set of \emph{sources} (i.e.~each $s \in S$ only has outedges incident to it) and $T \subset V$ is a set of \emph{sinks} (i.e.~each $t \in T$ only has inedges incident to it).
	A \emph{flow} for the flow network~$\mathcal{F}$ is a function $f \colon E(F) \to \mathbb{R}_{\geq 0}$ such that, for all $e\in E$, we have $f(e)\leq w(e)$ and, for all $v\in V\setminus (S \cup T)$, we have $\sum_{u\in N^-_F(v)}f(uv)=\sum_{u\in N^+_F(v)}f(vu)$.
	We define the \emph{value} of $f$ as $val(f)\coloneqq \sum_{s\in S}\sum_{v\in N^+_F(s)} f(sv)=\sum_{t\in T}\sum_{u\in N^-_F(t)} f(ut)$.
	A \emph{maximum flow} in a given flow network is a flow $f$ that maximises~$val(f)$.
	We say that $f$ is an \emph{integer flow} if $f(e)$ is an integer for all $e \in E$, and to emphasize the contrast, we sometimes refer to a flow that is not necessarily an integer flow as a \emph{fractional flow}. For any set of edges $E' \subseteq E$, we write $f(E'):= \sum_{e \in E'}f(e)$.
	If $S = \{s\}$ and $T = \{t\}$, then we simply write $(F,w,s,t)$.
	
	There are variants of the above notions that we will use. 
	In particular, as well as having edge capacities, a flow network can also have vertex capacities (which restricts the amount of flow that can pass through a vertex) so that now $w: E(F) \cup V(F) \to \mathbb{R}^+$, and for each $v \in V(F) \setminus (S \cup T) $ a flow is defined as before with the added restriction that $f(v) \leq w(v)$, where  $f(v):=\sum_{u\in N^-_F(v)}f(uv)=\sum_{u\in N^+_F(v)}f(vu)$. 
	One can easily reduce this to the situation of just edge capacities by replacing each vertex $v$ with a directed edge $v^-v^+$ of capacity $w(v)$ and each directed edge~$uv$ (or~$vw$) with~$uv^-$ (or~$v^+w$, respectively) where new edges inherit the capacities of their old counterparts. 
	We say that a flow~$f$ \emph{saturates} an edge $e$ (or a vertex~$v$) if $f(e) = w(e)$ (or $f(v) = w(v)$, respectively).
	
	Let $(F,w,s,t)$ be a flow network.
	For a partition $(U,W)$ of~$V(F)$ with $s\in U$, $t\in W$  the edge set~$E_F(U,W)$ is called a \emph{cut}.
	(Recall that $E_F(U,W)$ is the set of edges in~$F$ from~$U$ to~$W$.) 
	The \emph{capacity} of a cut $E_F(U,W)$ is the sum of the capacities of its edges, i.e.~$w(E_F(U,W)) \coloneqq \sum_{e \in E_F(U,W)}w(e)$. 
	A \emph{minimum cut} of the given flow network is a cut of minimum capacity. 
	We make use of the following well-known theorem.
	
	\begin{theorem}[Max-flow min-cut~\cite{DantzigFukerson}]
		\label{thm:maxflow}
		Let  $(F,w,s,t)$ be a flow network.
		\begin{enumerate}[label={\rm (\roman*)}]
			\item \label{itm:maxflow1} If $f$ is a flow and $E_F(U,W)$ is a cut then  
			$\mathit{val}(f) \leq \mathit{w}(E_F(U,W))$. 
			\item \label{itm:maxflow2} We have that $f$ is a maximum flow and  $E_F(U,W)$ is a minimum cut if and only if $\mathit{val}(f)=\mathit{w}(E_F(U,W))$ and in that case $f$ saturates every edge in $E_F(U,W)$. 
			\item \label{itm:maxflow3} If all capacities are non-negative integers, i.e.\ $w(e) \in  \mathbb{Z}_{\geq 0}$ for all $e \in E(F)$, then there exists a maximum flow $f$ that is an integer flow.
		\end{enumerate}
	\end{theorem}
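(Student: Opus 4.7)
The plan is to establish (i) via a telescoping computation, derive (ii) from the residual graph and an augmenting-path argument, and then bootstrap (iii) from the integer-valued nature of augmentations.

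For (i), I would fix a flow $f$ and a cut $E_F(U,W)$ with $s\in U$, $t\in W$, and sum the flow conservation identity $\sum_{u}f(uv)=\sum_{u}f(vu)$ over all $v\in U\setminus\{s\}$, then add in the definition $\mathit{val}(f)=\sum_{v\in N^+_F(s)}f(sv)$. After the telescoping cancellation of edges inside $U$, what remains is
\[
\mathit{val}(f)\;=\;\sum_{e\in E_F(U,W)}f(e)\;-\;\sum_{e\in E_F(W,U)}f(e)\;\le\;\sum_{e\in E_F(U,W)}w(e)\;=\;w(E_F(U,W)),
\]
using $0\le f(e)\le w(e)$ at the end. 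This also tells us equality forces $f(e)=w(e)$ on $E_F(U,W)$ and $f(e)=0$ on $E_F(W,U)$, which will be reused at the end of (ii).

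For (ii), the $(\Leftarrow)$ direction is immediate from (i): if some pair $(f,E_F(U,W))$ achieves equality, then by (i) no flow can exceed $\mathit{val}(f)$ and no cut can be smaller than $w(E_F(U,W))$, and the saturation statement is precisely the equality case from the previous paragraph. For $(\Rightarrow)$ I would introduce the residual network $R(f)$: for each $uv\in E(F)$ include $uv$ with capacity $w(uv)-f(uv)$ when this is positive, and include the reverse arc $vu$ with capacity $f(uv)$ when $f(uv)>0$. If there is a directed $s$-$t$ path $P$ in $R(f)$, pushing $\varepsilon>0$ units along $P$ (decreasing $f$ on reverse-arc edges and increasing it on forward ones) yields a valid flow of value $\mathit{val}(f)+\varepsilon$, contradicting maximality. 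Hence no such path exists. Let $U$ be the set of vertices reachable from $s$ in $R(f)$; then $s\in U$, $t\notin U$, and by the construction of $R(f)$ every edge leaving $U$ in $F$ is saturated while every edge entering $U$ in $F$ carries zero flow. Plugging into the formula from (i) gives $\mathit{val}(f)=w(E_F(U,W))$, so this cut is minimum by (i) and the saturation claim follows.

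For (iii), I would simply observe that the augmenting-path construction preserves integrality: if $f$ is an integer flow and all capacities are in $\mathbb{Z}_{\ge 0}$, then every residual capacity is a non-negative integer, so the minimum residual capacity along any augmenting path is a positive integer. Starting from the zero flow and iterating the augmentation, each step raises $\mathit{val}(f)$ by at least $1$. Since any cut gives an integer upper bound by (i), the process terminates after finitely many steps at an integer flow with no augmenting path, which is maximum by the argument in (ii). The main subtlety to flag is that the augmenting-path procedure need not terminate for general real capacities, but restricting to $w(e)\in\mathbb{Z}_{\ge 0}$ as in (iii) removes this obstacle entirely; for (ii) in the real-valued case one either argues via LP duality or via the fact that the $(U,W)$ cut constructed from an optimal $f$ does not require termination of any algorithm, only the existence of the maximum.
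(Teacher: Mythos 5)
The paper does not prove Theorem~\ref{thm:maxflow}; it is stated as a classical result with a citation to Dantzig and Fulkerson, so there is no in-paper argument to compare against. Your proof is the standard residual-network/Ford--Fulkerson argument and it is correct: part (i) via telescoping flow conservation over $U$, part (ii) via the cut of vertices reachable from $s$ in the residual digraph of a maximum flow, and part (iii) via integrality of augmentations with a finite cut bounding the number of iterations. You also correctly flag the one subtlety, namely that the augmenting-path termination argument is only needed for (iii), whereas (ii) requires only the existence of a maximum flow and not an algorithm to find it.
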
 
	
	We also use the following whose proof is omitted as it is straightforward.
	
	\begin{proposition}\label{prop:flow-reduction}
		Let $\mathcal{F} = (F,w,s,t)$ be a flow network and $f$ a flow for~$\mathcal{F}$. 
		Then, for any edge~$e\in E(F)$, there exists a flow~$f'$ for~$\mathcal{F}$ such that $f'(e)=0$ and $val(f') \ge val(f)-f(e)$.  
	\end{proposition}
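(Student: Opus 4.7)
The plan is to invoke the flow decomposition theorem. Any flow $f$ on $\mathcal{F}=(F,w,s,t)$ can be written as
\[
f \;=\; \sum_{i \in [r]} \alpha_i\, \mathbf{1}_{P_i} \;+\; \sum_{j \in [\ell]} \beta_j\, \mathbf{1}_{C_j},
\]
where $P_1,\dots,P_r$ are directed paths from $s$ to $t$ in $F$, $C_1,\dots,C_\ell$ are directed cycles in $F$, the coefficients $\alpha_i,\beta_j$ are positive, and $\sum_{i \in [r]} \alpha_i = val(f)$ (using that $s$ has only out-edges and $t$ only in-edges, so no cycle in the decomposition passes through either of them). This is standard: while $f \not\equiv 0$, pick any vertex whose outgoing $f$-value is positive and greedily extend a directed walk along edges with positive $f$-value; since $f$ satisfies conservation, the walk must either close into a directed cycle or, when started at $s$, reach $t$. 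Subtracting off the minimum edge-value of the resulting path or cycle from $f$ strictly reduces the support, so the process terminates.

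Given this decomposition, fix the edge $e \in E(F)$ from the statement and set
\[
A=\{i\in[r]: e\in E(P_i)\} \quad\text{and}\quad B=\{j\in[\ell]: e\in E(C_j)\}.
\]
Evaluating the decomposition at $e$ gives $f(e)=\sum_{i\in A}\alpha_i+\sum_{j\in B}\beta_j$. Now define
\[
f' \;:=\; \sum_{i \in [r]\setminus A} \alpha_i\, \mathbf{1}_{P_i} \;+\; \sum_{j \in [\ell]\setminus B} \beta_j\, \mathbf{1}_{C_j}.
\]
Since $f'$ is a non-negative combination of indicator flows along $s$-to-$t$ paths and cycles in $F$, conservation automatically holds at every internal vertex. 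As $0 \le f' \le f$ pointwise on $E(F)$, the capacity constraints are inherited from $f$, so $f'$ is a valid flow for $\mathcal{F}$. By construction $f'(e)=0$, and
\[
val(f') \;=\; \sum_{i\in[r]\setminus A}\alpha_i \;=\; val(f) - \sum_{i\in A}\alpha_i \;\geq\; val(f) - f(e),
\]
which is exactly what the proposition asserts.

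I do not foresee a genuine obstacle here: the only non-routine ingredient is the flow decomposition, which is classical, and the value-accounting above is immediate once the decomposition is in hand. If one preferred to avoid invoking the decomposition as a black box, one could argue more directly by iteratively locating a directed walk through $e$ in the positive-flow support of $f$ (either an $s$-to-$t$ walk or a cycle) and subtracting off its minimum edge value; each such subtraction reduces $f(e)$, strictly reduces the support, and decreases $val(f)$ by at most the amount removed from $e$. However, the decomposition viewpoint makes the bookkeeping cleanest.
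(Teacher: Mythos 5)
Your proof is correct and takes essentially the same approach as the paper's: the paper also cancels positive-flow cycles and then iteratively peels off $s$-to-$t$ paths through $e$, which is exactly the hand-built version of the flow decomposition you invoke as a black box.
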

	
	\begin{comment}
		\begin{proof}
			If $f(e)=0$ then take $f'=f$ and we are done. 
			If there is any directed cycle $C$ in $F$ with $f(e')>0$ for all $e' \in E(C)$, then, for each $e' \in E(C)$, reduce $f(e')$ by $m := \min_{e'' \in E(C)} f(e'')$.
			Do this for every such cycle $C$ to obtain a flow $f_1$; it is easy to see that $f_1$ is still a flow and that  $\mathit{val}(f_1) = \mathit{val}(f)$. 
			\\
			If $f_1(e)=0$ then set $f'=f_1$ and we are done. 
			If not, then let $P = u_1u_2 \cdots u_k$ be a maximal directed path such that $e \in E(P)$ and $f_1(e')>0$ for all $e' \in E(P)$.
			We must have that $u_1$ is the source; if not then since $f_1(u_1u_2) > 0$, the definition of flow implies there exists some $u_0$ such that $f_1(u_0u_1) > 0$ with $u_0$ distinct from $u_1, \ldots, u_k$ (since we destroyed all positive flow cycles) giving us a longer path. 
			Similarly $u_k$ must be the sink. 
			Now we reduce the flow along $P$ by $m:= \min_{e'' \in E(P)}f_1(e'') \leq f_1(e)$ to obtain a flow $f_2$ with $\mathit{val}(f_2) = \mathit{val}(f_1) - m$ and $f_2(e) = f_1(e)-m$.
			We iterate this until the flow through $e$ is $0$ to obtain a flow $f'$ where $f'(e)=0$ and  $\mathit{val}(f') = \mathit{val}(f_1) - f_1(e) \geq  \mathit{val}(f) - f(e)$.
		\end{proof}    
	\end{comment}

	\subsection{Preliminaries}
	
	We will require Vizing's theorem for multigraphs in the proof of Lemma~\ref{lem:matching}. Let $H$ be an (undirected) multigraph (without loops). The \textit{multiplicity} $\mu(H)$ of $H$ is the maximum number of edges between two vertices of $H$, and, as usual, $\Delta(H)$ is the maximum degree of $H$.
	A \textit{proper $k$-edge-colouring} of $H$ is an assignment of $k$ colours to the edges of $H$ such that incident edges receive different colours.
	
	\begin{theorem}[\cite{Vizing}; see e.g.~\cite{Diestel}]
		\label{thm:Vizing}
		Any multigraph $H$ has a proper $k$-edge colouring with $k = \Delta(H) + \mu(H)$ colours. In particular, by taking the largest colour class, there is a matching in $H$ of size at least $e(H)/(\Delta(G)+\mu(G))$.
	\end{theorem}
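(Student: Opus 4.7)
The ``in particular'' conclusion is immediate from the first part: a proper $k$-edge-colouring partitions $E(H)$ into $k$ matchings, so by pigeonhole at least one matching has size at least $e(H)/k$. The substantive claim is therefore $\chi'(H) \le k$ where $k = \Delta(H) + \mu(H)$, and the plan is to prove this by induction on $|E(H)|$ via the classical Vizing fan argument adapted to multigraphs.

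For the inductive step, I would remove an edge $e_0 = v_0 v_1$, apply the induction hypothesis to obtain a proper $k$-edge-colouring $c$ of $H - e_0$, and seek to extend $c$ to $e_0$. For each vertex $w$, write $M(w)$ for the set of colours missing at $w$; since $d_H(w) \le \Delta$ and there are $k = \Delta + \mu$ colours available, $|M(w)| \ge \mu \ge 1$. If $M(v_0) \cap M(v_1) \neq \emptyset$, colour $e_0$ with a common missing colour and stop. Otherwise, I would build a \emph{Vizing fan} at $v_0$: a sequence of (not necessarily distinct) neighbours $v_1, v_2, \ldots$ together with colours $\alpha_1, \alpha_2, \ldots$ such that $\alpha_i \in M(v_i)$ and $\alpha_i$ is the colour of an existing edge $v_0 v_{i+1}$. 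Growing the fan maximally, termination is forced by pigeonhole and yields one of two outcomes: either some $\alpha_i$ also lies in $M(v_0)$, in which case a ``downshift'' recolouring of $v_0 v_{i+1}, v_0 v_i, \ldots, v_0 v_2$ frees the colour $\alpha_i$ for $e_0$; or some $\alpha_i$ coincides with an earlier fan colour $\alpha_j$, in which case a Kempe-chain swap along the bichromatic $\alpha_i\beta$-path for some $\beta \in M(v_0)$ rearranges the colouring so that the first outcome applies.

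The essential adjustment for multigraphs — and in my view the main obstacle — is that up to $\mu$ parallel edges may exist between $v_0$ and any given $v_i$, so the fan can revisit the same vertex several times and parallel copies impose extra propriety constraints at the swap step. This is precisely where the bound $k = \Delta + \mu$ (rather than $\Delta + 1$) is used: the guarantee $|M(w)| \ge \mu$ at every vertex ensures the fan can always be extended without immediate collision, and it gives enough slack at the endpoints of the Kempe chain that the bichromatic swap cannot create two parallel edges of the same colour. The delicate case analysis is ensuring that the Kempe $\alpha_i\beta$-path terminates where it should (not returning to $v_0$) and that the downshift step respects all parallel edges, but once the colour-counting is done carefully with $\mu$ in place of $1$, the classical argument goes through unchanged.
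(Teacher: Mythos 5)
The paper does not prove this result; it is cited as a classical theorem of Vizing (with Diestel's textbook as a reference), so there is no ``paper's proof'' to compare against. Your outline is the standard Vizing-fan argument, and the key observation driving it --- that with $k = \Delta + \mu$ colours every vertex $w$ misses at least $k - d_H(w) \ge \mu \ge 1$ colours, which is exactly the slack needed to grow the fan and pick a free colour $\beta$ at $v_0$ --- is correctly identified. The pigeonhole deduction of the ``in particular'' clause is also right.

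Two small remarks on the sketch. First, your worry that the Kempe swap might ``create two parallel edges of the same colour'' is a non-issue: interchanging two colours along a maximal bichromatic component is always a proper recolouring, in multigraphs just as in simple graphs, because it is a bijection on the colour classes restricted to that component; parallel edges never coloured $\alpha$ and $\beta$ simultaneously beforehand, so none are created. The real delicacy in the multigraph case lies elsewhere, namely in the bookkeeping of the fan itself when several parallel edges $v_0 v_i$ occur (the fan may revisit a vertex, and one must track which specific edge carries which colour during the downshift), and in arguing that the $\alpha_i\beta$-Kempe path starting at the appropriate fan vertex does not terminate at $v_0$ before the swap can be exploited. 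Your claim that ``the classical argument goes through unchanged'' once one replaces $1$ by $\mu$ is slightly optimistic --- the multigraph proof is genuinely more involved and is usually presented separately in the literature precisely for this reason --- but the architecture you describe (fan, downshift, Kempe chain) is the correct one, and the colour-counting you rely on is sound. As a blind reconstruction of a cited classical theorem, this is an acceptable sketch rather than a complete proof.
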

	
	We will also require the following simple proposition about decomposing acyclic digraphs into paths.
	\begin{proposition}[{\cite[Proposition 2.6]{LPST}}]
		\label{prop:path-decomposition}
		Let $G$ be an acyclic digraph.
		Then the edges of~$G$ can be partitioned into $\sum_{v \in V(G)}\left| d^+(v) - d^-(v) \right|/2$ directed paths.
	\end{proposition}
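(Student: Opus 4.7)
The plan is to induct on $e(G)$. The base case $e(G) = 0$ is trivial since the target quantity $k := \sum_v |d^+(v) - d^-(v)|/2$ equals $0$ and no paths are needed. For the inductive step I will find a suitable directed path $P$ to remove so that $k$ drops by exactly one.

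To find $P$, since $G$ has an edge and is acyclic, I pick a topological source $v_0$ (a vertex with $d^-(v_0) = 0$) satisfying $d^+(v_0) \geq 1$, and extend greedily to a maximal directed path $P = v_0 v_1 \cdots v_\ell$. The key consequence of acyclicity is that $d^+_G(v_\ell) = 0$: any out-neighbour $u$ of $v_\ell$ already on $P$ would produce a directed cycle, so maximality forces $v_\ell$ to have no out-neighbours in $G$ at all. I then set $G' = G - E(P)$, which is still acyclic.

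Now I verify that removing $P$ decreases $\sum_v |d^+(v) - d^-(v)|$ by exactly $2$. Interior vertices $v_1, \ldots, v_{\ell-1}$ each lose one in-edge and one out-edge, so their excesses $d^+ - d^-$ are unchanged. At $v_0$ the excess drops from $d^+(v_0) \geq 1$ to $d^+(v_0) - 1 \geq 0$, so $|d^+ - d^-|$ decreases by $1$; at $v_\ell$ the excess rises from $-d^-(v_\ell) \leq -1$ (using that $P$ reaches $v_\ell$, hence $d^-_G(v_\ell) \geq 1$) to $-d^-(v_\ell) + 1 \leq 0$, again decreasing $|d^+ - d^-|$ by $1$. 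Thus $k$ drops by exactly one, and the induction hypothesis applied to $G'$ yields a decomposition into $k-1$ paths, giving $k$ paths for $G$ after reinstating $P$. The only delicate point is this endpoint bookkeeping, since otherwise the excess at $v_0$ or $v_\ell$ could flip sign and leave $|d^+ - d^-|$ unchanged or even larger; the source condition $d^-(v_0) = 0$ and the acyclicity-driven condition $d^+_G(v_\ell) = 0$ are precisely what rule this out.
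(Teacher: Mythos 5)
Your proof is correct and takes essentially the same approach as the paper's: remove a maximal directed path, observe that its endpoints are a source and a sink (you construct the path starting from a chosen source and argue the terminus is a sink via acyclicity, whereas the paper derives both endpoint conditions from maximality plus acyclicity, but the two are equivalent), check that this decreases $\sum_v |d^+(v)-d^-(v)|$ by exactly $2$, and iterate. The only difference is that you phrase it as formal induction on $e(G)$ and spell out the endpoint bookkeeping, which the paper leaves implicit.
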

	
	\begin{comment}
		\begin{proof}
			Let $ex(G):=\sum_{v \in V(G)}\left| d^+_{G}(v) - d^-_{G}(v) \right|/2$.
			Pick a maximal directed path $P$ in $G$ and notice that, since $G$ is acyclic and $P$ is maximal, $P$ starts at a vertex with only outneighbours and ends at a vertex with only inneighbours. Therefore $ex(G - E(P)) = ex(G) - 1$. 
			Iteratively removing maximal paths from $G$ therefore gives a decomposition of $G$ into $ex(G)$ edge-disjoint paths. 
		\end{proof}
	\end{comment}

	We will need the following property of regular digraphs; its proof is a simple exercise by considering $ \sum_{v \in V_{i*}} d^+(v)$ and $ \sum_{v \in V_{*i}} d^-(v)$.
	
	\begin{proposition}[{\cite[Proposition~3.2]{LPY}}]\label{prop:partition-regular-graphs}
		Let $G$ be a $d$-regular digraph, $k\in \mathbb{N}$, and $\mathcal{P}=\{V_{ij}:i,j\in [k]\}$ be a partition of~$V(G)$. 
		Then, for all $i \in [k]$, we have $d(|V_{i*}|-|V_{*i}|)= e( G_{i*} ) - e(G_{*i})$.
	\end{proposition}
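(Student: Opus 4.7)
The plan is to follow the hint given in the excerpt and apply a straightforward double-counting argument exploiting the $d$-regularity of $G$. The key point is that the edges leaving $V_{i*}$ split cleanly into those landing in $V_{*i}$ (which are counted neither in $G_{i*}$ nor in $G_{*i}$) and those landing in $V\setminus V_{*i}$ (which form exactly $E(G_{i*})$), and symmetrically for edges entering $V_{*i}$.

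First I would sum out-degrees over $V_{i*}$: since $G$ is $d$-regular,
\[
d|V_{i*}| \;=\; \sum_{v\in V_{i*}} d^+(v) \;=\; e_G(V_{i*},V) \;=\; e_G(V_{i*},V_{*i}) + e_G(V_{i*}, V\setminus V_{*i}).
\]
Now by the definitions given at the start of Section~\ref{sec:longcycles} and Section~\ref{sec:Path-Contraction}, we have $G_{ij}$ consists of the edges from $V_{i*}$ to $V_{*j}$, and $G_{i*}=\bigcup_{j\neq i} G_{ij}$, so $e(G_{i*})=\sum_{j\neq i} e_G(V_{i*},V_{*j}) = e_G(V_{i*},V\setminus V_{*i})$. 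Hence
\[
d|V_{i*}| \;=\; e_G(V_{i*},V_{*i}) + e(G_{i*}).
\]

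Next I would carry out the symmetric calculation by summing in-degrees over $V_{*i}$:
\[
d|V_{*i}| \;=\; \sum_{v\in V_{*i}} d^-(v) \;=\; e_G(V,V_{*i}) \;=\; e_G(V_{i*},V_{*i}) + e_G(V\setminus V_{i*}, V_{*i}),
\]
and again by the definitions, $e(G_{*i})=\sum_{j\neq i} e_G(V_{j*},V_{*i}) = e_G(V\setminus V_{i*},V_{*i})$, so
\[
d|V_{*i}| \;=\; e_G(V_{i*},V_{*i}) + e(G_{*i}).
\]

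Subtracting the two identities cancels the common term $e_G(V_{i*},V_{*i})$ and yields exactly $d(|V_{i*}|-|V_{*i}|)=e(G_{i*})-e(G_{*i})$, as required. There is no real obstacle here; the proof is essentially a one-line double count, and the only thing to keep straight is the correspondence between the subgraphs $G_{i*}, G_{*i}$ and the edge sets $E_G(V_{i*},V\setminus V_{*i})$, $E_G(V\setminus V_{i*},V_{*i})$ that comes from unfolding the definitions.
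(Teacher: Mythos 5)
Your proof is correct and follows exactly the approach the paper indicates: double-counting $\sum_{v\in V_{i*}} d^+(v)$ and $\sum_{v\in V_{*i}} d^-(v)$ using $d$-regularity, and cancelling the common term $e_G(V_{i*},V_{*i})$. No gaps.
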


	The next proposition shows that we can consider a simpler subgraph~$G$ in~$\mathcal{B}( G^0 , \mathcal{P} )$.

	\begin{proposition} \label{prop:subgraph}
		Let $G^0$ be a digraph, $k,d \in \mathbb{N}$ and $\mathcal{P}=\{V_{ij}:i,j\in [k]\}$ be a partition of~$V(G^0)$ such that
		for all $i \in [k]$, $e( G^0_{i*} ) - e(G^0_{*i}) = d ( |V_{i*}|-|V_{*i}| )$.
		Then, by reordering $[k]$ if necessary, there exists a subdigraph~$G$ of~$\mathcal{B}(G^0, \mathcal{P})$ such that
		\begin{align*}
			e(G)  & \le  d (k-1) \sum_{i \in [k]}\left| |V_{i*}|-|V_{*i}|  \right|/2,\\
			e( G_{i*} ) - e(G_{*i})   & = d(|V_{i*}|-|V_{*i}|) \text{ for all $i \in [k]$,}
		\end{align*}
		and for all $i,j \in [k]$ with $i \le j$, $e(G_{ji}) = 0$.
	\end{proposition}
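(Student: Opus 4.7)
The approach is to compress the between-part edge counts of $G^0$ into a multi-digraph on $[k]$, then apply cycle removal and Proposition~\ref{prop:path-decomposition}. Define the (loopless) directed multigraph $D$ on vertex set $[k]$ in which the number of parallel edges from $i$ to $j$ is $w_D(i,j) := e(G^0_{ij})$ for $i \neq j$. Writing $a_i := |V_{i*}| - |V_{*i}|$, the hypothesis says exactly that the net out-degree $d^+_D(i) - d^-_D(i)$ equals $d a_i$ for every $i \in [k]$, and $\sum_i a_i = 0$.

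Next I would iteratively remove directed cycles from $D$: while $D$ contains a directed cycle $C = v_1 v_2 \cdots v_r v_1$, delete one edge along each of the $r$ consecutive steps of $C$. Each such deletion simultaneously removes one outgoing and one incoming edge at every $v_\ell \in V(C)$, so it preserves the net out-degree at every vertex and strictly decreases $e(D)$. Since $e(D)$ cannot decrease forever, the process terminates with an acyclic sub-multigraph $D'$ of $D$ satisfying $d^+_{D'}(i) - d^-_{D'}(i) = d a_i$ for all $i$.

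Now I would apply Proposition~\ref{prop:path-decomposition} to $D'$, which partitions $E(D')$ into exactly
\[
\sum_{i \in [k]} \tfrac{1}{2}|d^+_{D'}(i) - d^-_{D'}(i)| = \tfrac{d}{2}\sum_{i \in [k]} |a_i|
\]
directed paths. Each directed path uses distinct vertices from $[k]$, hence has at most $k-1$ edges, giving $e(D') \le d(k-1)\sum_{i}|a_i|/2$. Since $D'$ is acyclic, topologically sort its vertices to obtain a relabelling of $[k]$ in which every edge of $D'$ goes from a smaller to a larger index.

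Finally, for each $i < j$ (in the new labelling), pick any $w_{D'}(i,j)$ edges from $G^0_{ij}$ inside $G^0$ — this is possible since $w_{D'}(i,j) \le w_D(i,j) = e(G^0_{ij})$ — and let $G$ be the union of all edges picked. Then $G \subseteq \mathcal{B}(G^0, \mathcal{P})$ with $e(G_{ij}) = w_{D'}(i,j)$ for $i < j$ and $e(G_{ji}) = 0$ for $i \le j$ (the case $i=j$ being automatic). The net-flow condition $e(G_{i*}) - e(G_{*i}) = d a_i = d(|V_{i*}|-|V_{*i}|)$ holds vertex by vertex because it does so in $D'$, and $e(G) = e(D') \le d(k-1)\sum_i |a_i|/2$ is exactly the desired bound. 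There is no real obstacle here; the one idea is the reformulation as a cycle-removal and path-decomposition problem on the $k$-vertex auxiliary multigraph $D$.
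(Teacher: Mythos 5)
Your argument is correct and essentially identical to the paper's proof: both set up an auxiliary $k$-vertex multidigraph with edge multiplicities $e(G^0_{ij})$, strip directed cycles (the paper also includes loops for $i=j$, but these vanish trivially), apply Proposition~\ref{prop:path-decomposition} to the resulting acyclic multigraph to get the edge bound via the $(k-1)$ path-length cap, topologically sort to arrange $e(G_{ji})=0$ for $i\le j$, and pull the surviving multigraph edges back to edges of $G^0$.
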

	
	\begin{proof}
		Define an auxiliary multidigraph~$H^0$ on~$[k]$ (with loops) such that there are precisely $e(G^0_{ij})$ many $ij$ edges in~$H^0$. 
		There is a natural bijection between $E(G^0)$ and~$E(H^0)$.
		Let $H$ be an acyclic subdigraph of~$H^0$ obtained by successively removing all edges of a directed cycle where we treat loops also as cycles.
		Note that, for all $i \in [k]$,
		\begin{align}
			d^+_{H}(i) - d^-_{H}(i) = 	d^+_{H^0}(i) - d^-_{H^0}(i) = e( G^0_{i*} ) - e(G^0_{*i}) = d(|V_{i*}|-|V_{*i}|). \label{eqn:prop1}
		\end{align}
		Since $H$ is acyclic, by relabelling if necessary, we may assume that there are no edges $ji$ with $i \le j$.
		By Proposition~\ref{prop:path-decomposition}, $E(H)$ can be decomposed into $\sum_{i \in [k]}\left| d^+_{H}(i) - d^-_{H}(i) \right|/2$ directed paths. 
		Since each path can have length at most $k-1$ (as $H$ has $k$ vertices), we have 
		\begin{align*}
			e(H) & 
			\le (k-1) \sum_{i \in [k]}\left| d^+_{H}(i) - d^-_{H}(i) \right| / 2 
			\overset{\mathclap{\text{\eqref{eqn:prop1}}}}{=} d (k-1) \sum_{i \in [k]}\left| |V_{i*}|-|V_{*i}|  \right|/2.
		\end{align*}
		The result follows by setting~$G$ to be the subdigraph of~$G^0$ corresponding to~$H$. 
	\end{proof}
	
	We need the following lemma from our previous work~\cite{LPY}. 
	It states that given a set of matchings of low total maximum degree, one can select a relatively large number of the edges from each matching so that the union of selected edges is also a matching.

	\begin{lemma}[{\cite[Lemma~4.2]{LPY}}]\label{lem:matchinglemma}
		Let $k,\ell\in\mathbb{N}$ and $M_1,M_2,\ldots,M_{\ell}$ be matchings with $\Delta\left(\bigcup_{i \in [\ell]} M_i\right)\leq k$.
		Suppose $e(M_i)> 8k^6\ln \ell$ for all $i \in [\ell]$.
		Then, there exists a matching $M \subseteq \bigcup_{i \in [\ell]}M_i$ with $|M\cap M_i|\geq e(M_i)/2k^2$ for all $i \in [\ell]$. 
	\end{lemma}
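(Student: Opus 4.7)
My plan is to prove this via a standard random sparsification argument followed by McDiarmid's bounded differences inequality. Let $G := \bigcup_{i \in [\ell]} M_i$ denote the union viewed as a multigraph, so $\Delta(G) \leq k$. I would fix $p := 1/k$ and, independently for each edge $e \in E(G)$, draw a Bernoulli random variable $X_e$ with parameter $p$. Define the random matching
\[
M := \{\, e \in E(G) : X_e = 1 \text{ and } X_f = 0 \text{ for every edge } f \neq e \text{ adjacent to } e\,\},
\]
which is automatically a matching.

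For the expectation: every edge $e \in M_i$ has at most $2(k-1)$ neighbouring edges in $G$, so by independence $\Pr[e \in M] \geq p(1-p)^{2(k-1)} = \Omega(1/k)$, and hence $\mathbb{E}[|M \cap M_i|] = \Omega(|M_i|/k)$, which comfortably exceeds the target $|M_i|/(2k^2)$ once $k$ is above a small absolute constant. To pass from expectation to the ``for all $i$'' conclusion I would apply McDiarmid's inequality to $Y_i := |M \cap M_i|$, viewed as a function of the independent Bernoulli variables $\{X_e\}_{e \in E(G)}$. Only variables $X_e$ with $e$ in or adjacent to $M_i$ can influence $Y_i$, and there are at most $2k|M_i|$ such edges. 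Since $M_i$ is itself a matching, a single coordinate alters $Y_i$ by only $O(1)$ (at most two neighbours in $M_i$, plus possibly $e$ itself). McDiarmid therefore yields
\[
\Pr\!\left[ Y_i \leq \mathbb{E}[Y_i] - t \right] \leq \exp\!\left( -\frac{t^2}{O(k|M_i|)} \right).
\]
Taking $t := \mathbb{E}[Y_i] - |M_i|/(2k^2) = \Omega(|M_i|/k)$ and substituting the hypothesis $|M_i| > 8k^6 \ln \ell$ drives the right-hand side below $1/(2\ell)$; a union bound over $i \in [\ell]$ then shows that with positive probability $|M \cap M_i| \geq |M_i|/(2k^2)$ for every $i$, producing the required matching $M$.

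The main difficulty will be bookkeeping of the constants: the gap between $\mathbb{E}[Y_i] \gtrsim |M_i|/k$ and the target $|M_i|/(2k^2)$ is only a factor of about $k/2$, so the McDiarmid deviation $O(\sqrt{k|M_i|\ln\ell})$ must be calibrated precisely against this gap---this is exactly what the hypothesis $e(M_i) > 8k^6\ln\ell$ is designed to provide (the $k^6$ comes from squaring the factor $k$ lost to the gap times the factor $k$ inside the exponent). Very small values of $k$ (e.g.\ $k=1$, where $G$ is itself a matching and $M := \bigsqcup_i M_i$ trivially works) can be dispatched by inspection or by slightly tuning~$p$.
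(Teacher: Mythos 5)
This lemma is quoted from \cite{LPY} and not proved in the present paper, so there is no in-paper argument to compare against; I can only assess the proposal on its own terms. The plan---Bernoulli sparsification, McDiarmid on $Y_i = |M \cap M_i|$, union bound over $i$---is the natural one, and your bookkeeping of the Lipschitz constants (at most $2$) and the number of relevant coordinates (at most $(2k-1)|M_i|$) is correct, so the argument does close once $k$ is reasonably large.

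However, the treatment of small $k$ is a genuine gap, not a matter of ``inspection or slightly tuning $p$''. With your choice $p = 1/k$ and $k=2$, one gets $\Pr[e \in M] \ge p(1-p)^{2(k-1)} = \tfrac12\cdot\tfrac14 = \tfrac18$, which \emph{equals} the target $1/(2k^2) = 1/8$; so $t = \mathbb{E}[Y_i] - |M_i|/(2k^2)$ is not positive and McDiarmid yields nothing. Tuning to the optimal $p = 1/(2k-1)$ does make $t>0$, but the margin is still too thin against the hypothesis $e(M_i) > 8k^6 \ln\ell$. Concretely, for $k=3$ and $p=1/5$ one has $\Pr[e\in M] \ge \tfrac15(\tfrac45)^4 \approx 0.082$ versus target $1/18 \approx 0.056$, so $t \approx 0.026\,|M_i|$; with $\sum_j c_j^2 \le |M_i| + 4\cdot 2(k-1)|M_i| = 17|M_i|$, McDiarmid gives failure probability roughly $\exp(-8\times 10^{-5}|M_i|)$, which under $|M_i| > 8\cdot 3^6\ln\ell$ is only about $\ell^{-0.5}$, and the union bound over $\ell$ matchings is then $\ge \ell^{0.5} \ge 1$. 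The same computation fails for $k=2$. So for $k\in\{2,3\}$ the constant $8k^6$ in the hypothesis is not large enough to run your argument, whereas the lemma is claimed for all $k\in\mathbb{N}$. One clean way to recover these cases is to replace the Bernoulli model by a uniformly random linear order on $E(G)$ and put $e\in M$ whenever $e$ precedes all of its at most $2(k-1)$ neighbouring edges; this gives $\Pr[e\in M]\ge 1/(2k-1)$, roughly a factor $e$ better than the Bernoulli bound, which restores a healthy gap $t = |M_i|\bigl(\tfrac{1}{2k-1}-\tfrac{1}{2k^2}\bigr)$ for all $k\ge 2$ before running a concentration inequality for random permutations. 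As written, though, the proposal does not prove the stated lemma for $k=2,3$.
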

	
	Any graph (or digraph) either has a large matching or has all its edges incident to a small set of vertices (so these vertices have relatively large degree).
	The following lemma allows us to interpolate between these extremes and moreover does it simultaneously for all~$G_{ij}$.
	This corresponds to the ``extendable matchings'' described in the sketch of proof.
	
	\begin{lemma} \label{lem:matching}
		Let $G$ be a digraph, $k,d\in \mathbb{N}$, and $\mathcal{P} = \{V_{ij}:i,j\in [k]\}$ be a partition of $V(G)$.
		Let $\theta \in(0,1)$ with $\theta d \ge 2$. 
		For all $i,j \in [k]$ and $v \in V(G)$, let $X_{ij}^+ = \{v \in V(G) : d_{G_{ij}}^+(v) \ge \theta  d \}$ and define $X_{ij}^-$ similarly. 
		Then, for each $i,j \in [k]$, there exists a matching~$M_{ij}$ of~$G_{ij}$ such that
		\begin{enumerate}[label = {\rm(\roman*)}]
			\item $	e(G_{ij})  \le  \sum_{x^+ \in X_{ij}^+}  d_{G_{ij}}^+(x^+) + \sum_{x^- \in X_{ij}^-}  d_{G_{ij}}^-(x^-) + (6 k^2 \theta d) e( M_{ij} ) + 50 k^{10} \theta d  $;
			\item for all $uv \in M_{ij}$, $u \notin X_{ij}^+$ and $v \notin X_{ij}^-$;
			\item $\bigcup_{i,j \in [k]} M_{ij}$ is a matching.
		\end{enumerate}
	\end{lemma}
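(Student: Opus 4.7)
The plan is to split the edges of each $G_{ij}$ into an ``extreme'' part, incident to the high-degree sets $X_{ij}^+$ or $X_{ij}^-$, and a ``non-extreme'' part that avoids them. On the non-extreme parts I would extract a large matching via Vizing's theorem, then thin these down via Lemma~\ref{lem:matchinglemma} so that their union remains a matching across all $(i,j)$.

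More concretely, first, for each $i,j \in [k]$, I would define $G_{ij}^0 \subseteq G_{ij}$ to be the subdigraph of edges $uv$ with $u \notin X_{ij}^+$ and $v \notin X_{ij}^-$. A direct double-count yields
\[
e(G_{ij}) - e(G_{ij}^0) \le \sum_{x^+ \in X_{ij}^+} d_{G_{ij}}^+(x^+) + \sum_{x^- \in X_{ij}^-} d_{G_{ij}}^-(x^-),
\]
so the lemma reduces to finding $M_{ij} \subseteq G_{ij}^0$ (which gives~(ii) for free) with $e(G_{ij}^0) \le (6k^2 \theta d)\, e(M_{ij}) + 50 k^{10} \theta d$ and with $\bigcup_{i,j} M_{ij}$ a matching.

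By definition, in $G_{ij}^0$ every vertex has out- and in-degree strictly less than $\theta d$, so the underlying undirected multigraph has maximum degree at most $2\theta d$ and multiplicity at most $2$. Vizing's theorem (Theorem~\ref{thm:Vizing}) therefore produces a matching $M_{ij}^*$ in $G_{ij}^0$ with $e(M_{ij}^*) \ge e(G_{ij}^0)/(2\theta d + 2)$. I then split the pairs $(i,j)$ into \emph{large} ones (those with $e(M_{ij}^*)$ above the threshold needed by Lemma~\ref{lem:matchinglemma}, roughly $\Theta(k^6 \log k)$) and \emph{small} ones. For small pairs I set $M_{ij} = \emptyset$; then $e(G_{ij}^0) \le (2\theta d + 2)\, e(M_{ij}^*)$ is bounded by a quantity of order $k^6 \theta d \log k$, comfortably absorbed by the additive error $50 k^{10} \theta d$.

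For the large pairs I apply Lemma~\ref{lem:matchinglemma}. The key observation here is that $\Delta(\bigcup_{i,j} M_{ij}^*) \le 2k$: each vertex $v$ lies in exactly one part $V_{ab}$ of~$\mathcal{P}$, so $v$ can appear as the source endpoint of a matching edge only in the matchings $M_{a1}^*,\ldots,M_{ak}^*$ and as a target endpoint only in $M_{1b}^*,\ldots,M_{kb}^*$, contributing at most one edge in each. Invoking Lemma~\ref{lem:matchinglemma} with the parameter $2k$ in place of its~$k$ and $\ell \le k^2$ then yields $M_{ij} \subseteq M_{ij}^*$ with $e(M_{ij}) \ge e(M_{ij}^*)/(8k^2)$ such that $\bigcup_{i,j} M_{ij}$ is a matching, giving~(iii); combined with Vizing, $e(G_{ij}^0)$ is bounded by a constant multiple of $k^2 \theta d \cdot e(M_{ij})$, which, together with the small-pair slack, gives~(i).

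The main obstacle is purely quantitative: one must track constants carefully enough that the composite coefficient coming from Vizing ($2\theta d + 2$) multiplied by the $8k^2$ loss in Lemma~\ref{lem:matchinglemma} compresses to the stated $6k^2\theta d$, and one must choose the large/small threshold so that the small-pair contribution genuinely fits inside the $50 k^{10} \theta d$ error budget while leaving the large pairs with enough edges to meet the hypothesis of Lemma~\ref{lem:matchinglemma}. Everything else is a routine application of earlier tools.
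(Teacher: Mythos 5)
Your approach is precisely the paper's: delete all edges incident to $X_{ij}^+\cup X_{ij}^-$, apply Vizing to the resulting multigraph (degree $\le 2\theta d$, multiplicity $\le 2$, so $\Delta+\mu\le 2\theta d+2\le 3\theta d$ using $\theta d\ge 2$), set undersized matchings to empty so the additive $50k^{10}\theta d$ absorbs them, and thin the rest via Lemma~\ref{lem:matchinglemma} to obtain disjointness. The only real point of divergence, which you rightly flag as ``the main obstacle,'' is the degree bound fed into Lemma~\ref{lem:matchinglemma}: you compute $\Delta\bigl(\bigcup_{i,j}M_{ij}^*\bigr)\le 2k$ (a vertex of $V_{ab}$ can be the tail of one edge in each $M_{aj}^*$, $j\in[k]$, and the head of one edge in each $M_{ib}^*$, $i\in[k]$), while the paper asserts $\Delta\le k$. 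The paper's bound is exactly what makes the stated constants come out: with $k$ one gets the factor $2k^2$ from Lemma~\ref{lem:matchinglemma}, hence $(3\theta d)\cdot 2k^2=6k^2\theta d$, together with a threshold of $16k^{10}>8k^6\ln(k^2)$ that accounts for the $50k^{10}\theta d$. With your $2k$ the factor becomes $8k^2$ and the coefficient inflates to roughly $24k^2\theta d$, and the threshold needed rises to about $1024k^6\ln k$, which $16k^{10}$ does not dominate at $k=2$. If $\Delta$ in Lemma~\ref{lem:matchinglemma} denotes the underlying undirected degree (rather than the maximum semi-degree), your $2k$ is the correct bound and the paper's $\Delta\le k$ is a small slip; this is harmless, however, since the constants $6k^2$ and $50k^{10}$ are used in the proof of Lemma~\ref{lem:balancedpath} only up to constant factors (they are swallowed by $\gamma^{1/3}$), so the lemma with slightly larger constants would serve equally well. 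Aside from this constant-tracking point, your proposal reproduces the paper's argument faithfully.
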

	
	\begin{proof}
		Consider $i,j \in [k]$. 
		Let $H_{ij}$ be the multigraph obtained from $G_{ij}$ by deleting all the edges~$uv$ with $u\in X_{ij}^+$ or $v \in X_{ij}^-$, and by making all the edges undirected.
		Note that we have $\Delta(H_{ij}) + \mu(H_{ij}) \leq 2\theta d+2$ and
		\begin{align}
			e(H_{ij}) \geq e(G_{ij}) -  \left(  \sum_{x^+ \in X_{ij}^+} d_{G_{ij}}^+(x^+) + \sum_{x^- \in X_{ij}^-}  d_{G_{ij}}^-(x^-) \right).	\label{eqn:e(H_{ij})}
		\end{align}
		Then, by Vizing's theorem for multigraphs (Theorem~\ref{thm:Vizing}), there exists a matching $M_{ij}^H$ in~$H_{ij}$ of size at least $e(H_{ij})/(2\theta d+2) \ge e(H_{ij})/3\theta d$.
		Let $M_{ij}^0$ be the corresponding matching in~$G_{ij}$.
		If $e(M_{ij}^0) \le 16 k^{10}$, then we set $M_{ij}^0$ to be empty. 
		Thus, together with~\eqref{eqn:e(H_{ij})} we have
		\begin{align*}
			e(G_{ij}) \le   \left(  \sum_{x^+ \in X_{ij}^+} d_{G_{ij}}^+(x^+) + \sum_{x^- \in X_{ij}^-}  d_{G_{ij}}^-(x^-) \right) + (3 \theta d) e( M_{ij}^0 ) + 50 k^{10} \theta d   .
		\end{align*}
		Observe that $\Delta\left(\bigcup_{i,j \in [k]} M_{ij}^0\right)\leq k$.
		Apply Lemma~\ref{lem:matchinglemma} for nonempty matchings $M_{ij}^0$ (with $\ell\le k^2$) to obtain $M_{ij} \subseteq M_{ij}^0$ (set $M_{ij}=\emptyset$ if $M_{ij}^0=\emptyset$) such that $\bigcup_{i,j \in [k]}M_{ij}$ is a matching and, for all $i,j \in [k]$, $e(M_{ij}) \ge e(M_{ij}^0) / 2k^2$.
		The result follows. 	
	\end{proof}
	
	Recall that for any directed matching $M$, $V^+(M)$ and $V^-(M)$ are the sets of starting and ending vertices of the directed edges in~$M$, respectively.
	Formally, $V^+(M) =\{v\in V(M): vw\in E(M) \text{ for some }w\in V(M) \}$ and similarly for $V^-(M)$.

	\begin{proposition} \label{prop:extendingmatching}
		Let $G$ be a digraph, $k\in \mathbb{N}$, and $\mathcal{P}=\{V_{ij}:i,j\in [k]\}$ be a partition of~$V(G)$.
		Let $W \subseteq V(G)$.
		Suppose that, for each $i,j \in [k]$ with $i \ne j$, there exist $Y_{ij}^+ \subseteq V_{i*}$, $Y_{ij}^- \subseteq V_{*j}$ and $M_{ij} \subseteq G_{ij}$ such that 
		\begin{enumerate}[label = {\rm(\alph*)}]
			\item $M = \bigcup_{i, j \in [k]} M_{ij}$ is a matching;
			\item both of $\{Y_{ij}^+, V^+(M_{ij}) : i,j \in [k] \}$ and $\{ Y_{ij}^-, V^-(M_{ij}) : i,j \in [k] \}$ are sets of disjoint sets;
			\item \label{itm:extendingmatchingc} for all $y^+ \in Y_{ij}^+$ and $y^- \in Y_{ij}^-$, $d_{G_{ij}}^+(y^+),d_{G_{ij}}^-(y^-) \ge 2\sum_{i,j \in [k]}  ( |Y_{ij}^+| + |Y_{ij}^-| + e(M_{ij}) ) + |W|+1$. 
		\end{enumerate}
		Then $\mathcal{B}(G, \mathcal{P})$ contains a path system~$\mathcal{Q}$ such that the following hold: 
		\begin{enumerate}[label = {\rm(\roman*)}]
			\item \label{itm:extendingmatchingi} $e(\mathcal{Q}_{ij}) = |Y_{ij}^+| + |Y_{ij}^-| + e(M_{ij})$ for all $i,j \in [k]$, where $\mathcal{Q}_{ij}=\mathcal{Q}\cap E(G_{ij})$;
			\item \label{itm:extendingmatchingii} $V(\mathcal{Q})\cap W \subseteq \bigcup_{i,j \in [k]} (Y_{ij}^+ \cup Y_{ij}^- \cup V(M_{ij})) $;
			\item \label{itm:extendingmatchingiii} if $y^+\in Y_{ij}^+\setminus V^-(M)$, then there exists $u\in V_{*j}\setminus W$  such that the single edge~$y^+u$ is a path in~$\mathcal{Q}$, and a similar statement for $y^-\in Y_{ij}^-\setminus V^+(M)$ holds. 
		\end{enumerate}
	\end{proposition}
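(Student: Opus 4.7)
I would construct $\mathcal{Q}$ greedily. Start by setting $\mathcal{Q} := M$; since every $M_{ij}$ with nonzero edges has $i \neq j$, $M$ is a matching contained in $\mathcal{B}(G, \mathcal{P})$, so $\mathcal{Q}$ is a valid path system. Then, processing in any order each $y^+ \in Y_{ij}^+$ (over all pairs with $i \neq j$), add one new edge $y^+u \in E(G_{ij})$ where $u \in N^+_{G_{ij}}(y^+) \setminus (V(\mathcal{Q}) \cup W)$; similarly, for each $y^- \in Y_{ij}^-$, add one new edge $u'y^- \in E(G_{ij})$ where $u' \in N^-_{G_{ij}}(y^-) \setminus (V(\mathcal{Q}) \cup W)$. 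The intuition is that the enormous degrees of the $y^\pm$ in $G_{ij}$ leave room to pick a target outside the (few) vertices already used and outside $W$.

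The key claim is that each greedy choice is possible. Throughout the procedure $e(\mathcal{Q})$ is bounded by $e(M) + \sum_{i,j}(|Y_{ij}^+| + |Y_{ij}^-|) = \sum_{i,j}(|Y_{ij}^+| + |Y_{ij}^-| + e(M_{ij}))$, whence $|V(\mathcal{Q})| \leq 2 e(\mathcal{Q}) \leq 2\sum_{i,j}(|Y_{ij}^+| + |Y_{ij}^-| + e(M_{ij}))$. Hypothesis~\ref{itm:extendingmatchingc} then guarantees that the out-degree of $y^+$ in $G_{ij}$ strictly exceeds $|V(\mathcal{Q})| + |W|$, so a valid $u$ exists; the argument for $y^-$ is symmetric. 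I would then verify that each addition keeps $\mathcal{Q}$ a path system: by condition~(b) the set $Y_{ij}^+$ is disjoint from every other $Y_{i'j'}^+$ and from every $V^+(M_{i'j'})$, so no prior step has added an out-edge at $y^+$, meaning $y^+$ currently has out-degree $0$; the target $u$ lies outside $V(\mathcal{Q})$, so it has in-degree $0$ and, being fresh, its introduction cannot close a cycle. The analogous statement holds for $y^-$ using~(b) on the $V^-$ side.

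The three conclusions then follow straightforwardly. Item~\ref{itm:extendingmatchingi} is immediate: exactly $|Y_{ij}^+| + |Y_{ij}^-|$ edges of $G_{ij}$ are appended to the $e(M_{ij})$ edges already in $M_{ij}$. For~\ref{itm:extendingmatchingii}, every vertex of $V(\mathcal{Q}) \setminus V(M)$ is either a processed vertex (lying in some $Y_{ij}^+ \cup Y_{ij}^-$) or a target chosen outside $W$, yielding $V(\mathcal{Q}) \cap W \subseteq V(M) \cup \bigcup_{i,j}(Y_{ij}^+ \cup Y_{ij}^-) \subseteq \bigcup_{i,j}(Y_{ij}^+ \cup Y_{ij}^- \cup V(M_{ij}))$. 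For~\ref{itm:extendingmatchingiii}, the specific edge added when processing a $y^+ \in Y_{ij}^+ \setminus V^-(M)$ is of the form $y^+u$ with $u \in V_{*j} \setminus W$, which is a $1$-edge path contained in $\mathcal{Q}$; the statement for $y^- \in Y_{ij}^- \setminus V^+(M)$ is symmetric. The main (but very mild) obstacle is the bookkeeping: checking that, after the eventual size of $V(\mathcal{Q})$ is folded in, the bad set $V(\mathcal{Q}) \cup W$ never exceeds the degree bound in~(c). This is exactly the reason the factor of $2$ appears in hypothesis~\ref{itm:extendingmatchingc}.
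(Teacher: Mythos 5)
Your greedy construction is the same in spirit as the paper's, but there is a genuine gap in the target selection. When processing $y^+ \in Y_{ij}^+$ you only require the new endpoint $u$ to lie outside $V(\mathcal{Q}) \cup W$. That does not prevent $u$ from being an as-yet-unprocessed vertex of $\bigcup_{i',j'}(Y_{i'j'}^+ \cup Y_{i'j'}^-)$, and this possibility breaks the argument in two ways. First, if $u = y'^- \in Y_{i'j'}^-$, then the added edge $y^+u$ gives $u$ an in-edge, and when $y'^-$ is processed later you add a second in-edge $u''y'^-$, so $u$ ends up with in-degree $2$ and $\mathcal{Q}$ is no longer a path system. (Your assertion that ``no prior step has added an out-edge at $y^+$'' only invokes hypothesis~(b), but (b) concerns disjointness of the $Y$-sets and the $V^{\pm}(M_{ij})$; it says nothing about vertices that were \emph{chosen as targets} in earlier steps.) Second, even in the benign case $u = y'^+ \in Y_{i'j'}^+$, later processing of $y'^+$ extends $y^+y'^+$ to a path $y^+y'^+u'$ of length two, so if $y'^+ \notin V^-(M)$ then conclusion~\ref{itm:extendingmatchingiii} fails for $y'^+$ — the edge $y'^+u'$ is no longer an isolated path.

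The fix is exactly what the paper does: insist additionally that the target $u$ avoid all of $\bigcup_{i,j}(Y_{ij}^+ \cup Y_{ij}^-)$. This costs nothing extra from the degree bound, because every vertex of $\bigcup(Y^+ \cup Y^-)$ will lie in $V(\mathcal{Q})$ once the procedure terminates, so at any intermediate step
\begin{align*}
\left| V(\mathcal{Q}) \cup \bigcup_{i,j}(Y_{ij}^+ \cup Y_{ij}^-) \cup W \right|
\le \left| V(\mathcal{Q}^{\mathrm{final}}) \right| + |W|
\le 2 \sum_{i,j \in [k]} \bigl( |Y_{ij}^+| + |Y_{ij}^-| + e(M_{ij}) \bigr) + |W|,
\end{align*}
which is precisely what hypothesis~\ref{itm:extendingmatchingc} accommodates. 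With this amendment, your verification that each step preserves the path system property, and your derivations of \ref{itm:extendingmatchingi}--\ref{itm:extendingmatchingiii}, go through as written.
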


	\begin{proof}
		Start by setting $\mathcal{Q}=M$ and then for each $y \in Y_{ij}^+$ (and  $y \in Y_{ij}^-$), greedily add an edge $yv$ (and~$vy$, respectively) in~$G_{ij} \setminus W$ such that $v$ avoids all current vertices in~$\mathcal{Q}$  and all vertices in~$\bigcup_{i,j}(Y_{ij}^+\cup Y_{ij}^-)$, which is possible by~\ref{itm:extendingmatchingc}.
		It is clear that we always maintain a path system, and that \ref{itm:extendingmatchingi}--\ref{itm:extendingmatchingiii}  hold by construction. 
	\end{proof}

	\subsection{Proof of Lemma~\ref{lem:balancedpath}}
	
	We now prove Lemma~\ref{lem:balancedpath}.

	\begin{proof}[Proof of Lemma~\ref{lem:balancedpath}]
		We split the proof into several steps.

		\noindent \textbf{Step 1: Defining $G$.}
		By Proposition~\ref{prop:partition-regular-graphs}, $e( G^0_{i*} ) - e(G^0_{*i}) = d(|V_{i*}|-|V_{*i}|)$ for all $i \in [k]$. 
		Apply Proposition~\ref{prop:subgraph} and, without loss of generality,  obtain a subdigraph~$G$ of~$\mathcal{B}(G^0, \mathcal{P})$ such that
		\begin{align}
			e( G ) & \le 	(k-1)d \sum_{i \in [k]}  | |V_{i*}|-|V_{*i}| |/2 
			\overset{\mathclap{\text{\ref{itm:balancedpath3}}}}{\le} k^2  \gamma d n.
			\label{eqn:subgraph2}\\
			e( G_{i*} ) - e(G_{*i}) & = d(|V_{i*}|-|V_{*i}|) \text{ for all $i \in [k]$,} 	\label{eqn:subgraph1}
		\end{align}
		and, for all $i,j \in [k]$ with $i \le j$, $e(G_{ji}) = 0$.
		\smallskip
		
		\noindent \textbf{Step 2: Finding $X_{ij}^+$, $X_{ij}^-$ and $M_{ij}$.}
		For all $i,j \in [k]$, let
		\begin{align}
			X_{ij}^+ = \{v \in V(G) : d_{G_{ij}}^+(v) \ge \gamma^{1/3} d  \} 
			\,\text{ and }\, 
			X_{ij}^- = \{v \in V(G) : d_{G_{ij}}^-(v) \ge \gamma^{1/3} d  \}. \label{eqn:DEFINITION-X-ij}
		\end{align}
		Observe that $X_{ij}^+\subseteq V_{i*}$ and $X_{ij}^-\subseteq V_{*j}$ for all $i,j\in[k]$.
		Note that $\gamma^{1/3} d  \ge \gamma^{1/3} \alpha n \ge 2$ as $1/n\ll\gamma\ll \alpha$. 
		Apply Lemma~\ref{lem:matching} to~$G$ (with $\theta = \gamma^{1/3} $) and obtain a matching~$M_{ij}$ of $G_{ij}$ for~$i,j \in [k]$, such that
		\begin{enumerate}[label = {\rm(\roman*$'$)}]
			\item $e(G_{ij}) \le \sum_{x^+ \in X_{ij}^+}  d_{G_{ij}}^+(x^+) + \sum_{x^- \in X_{ij}^-}  d_{G_{ij}}^-(x^-) +  (6 k^2 \gamma^{1/3}  d) e( M_{ij} ) + 50 k^{10}\gamma^{1/3} d $; \label{itm:Msize}
			\item for all $uv \in M_{ij}$, $u \notin X_{ij}^+$ and $v \notin X_{ij}^-$;
			\item $\bigcup_{i,j \in [k]} M_{ij}$ is a matching. \label{itm:M}
		\end{enumerate}
		By deleting vertices in $X_{ij}^+ \cup X_{ij}^-$ and edges of~$M_{ij}$ if necessary, we may assume that the RHS of~\ref{itm:Msize} is bounded above by~$e(G_{ij})+d$.
		Hence 
		\begin{align*}
			\gamma^{1/3} d ( |X_{ij}^+|+ |X_{ij}^-|+ e( M_{ij} ) ) 
			& \le 
			\sum_{x^+ \in X_{ij}^+} d_{G_{ij}}^+(x^+) + \sum_{x^- \in X_{ij}^-} d_{G_{ij}}^- ( x^- ) + (6 k^2 \gamma^{1/3} d ) e( M_{ij} ) +  50 k^{10}\gamma^{1/3}  d \\
			& \le e(G_{ij})+ d \overset{\text{\eqref{eqn:subgraph2}}}{\le}  k^2  \gamma d n +d \le   \gamma^{2/3} k^{-2}  d^2 /3,
		\end{align*}
		where the last inequality holds as $d \ge \alpha n$ and $1/n \ll \gamma \ll \alpha, 1/k$.
		Therefore, 
		\begin{enumerate}[label = {\rm(\roman*$'$)}, resume]
			\item $\sum_{i,j \in [k]}  \left ( |X_{ij}^+| + |X_{ij}^-| + e(M_{ij})  \right) \le  \gamma^{1/3} d / 3 $.\label{itm:size}
		\end{enumerate}
		
		\noindent \textbf{Step 3: Defining flow networks.}
		For $i,j \in [k]$, let 
		\begin{align*}
			M =\bigcup_{i,j \in [k]} M_{ij}, \quad
			X^+_{i*} = \bigcup_{j \in [k]} \left( X_{ij}^+ \cup V^+(M_{ij}) \right), \text{ and } 
			X^-_{*j} = \bigcup_{i \in [k]} \left( X_{ij}^- \cup V^-(M_{ij}) \right).
		\end{align*}
		Note that $X^+_{i*} \subseteq V_{i*}$ and $X^-_{*j} \subseteq V_{*j}$.
		Let $X^+ = \bigcup_{i \in [k]} X^+_{i*}$ and $X^- = \bigcup_{j \in [k]} X^-_{*j}$.

		We now define a flow network $\mathcal{F} = (F,w,S,T)$ with multiple sources and sinks as follows.
		Let $S= \{s_i: i \in [k]\}$ and $T= \{t_i: i \in [k]\}$.
		Let $V(F) = S \cup T \cup X^+ \cup X^-$. 
		Here we treat $X^+$ and $X^-$ as disjoint, i.e. both $X^+$ and $X^-$ contain a distinct copy of $v$ for any vertex $v\in X^+\cap X^-$.
		For~$v \in V(G)$, we write $v^+$ (and $v^-$) for the copy of~$v$ belonging to~$X^+$ (and $X^-$, respectively).
		We define $E(F)$ as the union of edge-disjoint paths as follows: for each $i,j \in [k]$ with $i < j$, 
		\begin{itemize}
			\item for each $x^+ \in X_{ij}^+$, we add the directed path $s_{i} x^+ t_{j}$;
			\item for each $x^- \in X_{ij}^-$, we add the directed path $s_{i} x^- t_{j}$; 
			\item for each $e =  uv \in M_{ij}$, we add the directed path $s_{i} u^+ v^- t_{j}$;
			\item every edge and vertex has capacity $1$, i.e.\ $w(e) = 1$ for all $e \in E(F)$ and $w(v) = 1$ for all $v \in V(F) \setminus (S \cup T) = X^+ \cup X^-$.
		\end{itemize}
		We modify the flow network slightly if \ref{item:size-condition-V-ij} or \ref{item:degree-condition-v0} holds. Let
		\begin{align*}
			F_0=\begin{cases}
				F - \{v_0^+ \} - \{s_{i_0} v_0^-\} - E_F(X^+,v_0^-)&\text{ if }v_0\in V_{i_0j_0}\text{ exists satisfying \ref{item:degree-condition-v0}},\\
				F&\text{ otherwise.}
			\end{cases}
		\end{align*}
		
		We further remove $v_0^-$ from~$F_0$ if $v_0^-$ has no inneighbour in~$F_0$.
		Write $w_0$ for the capacities on~$F_0$ inherited from~$F$ and define $\mathcal{F}_0 = (F_0,w_0,S,T)$. 
		By the definition of~$\mathcal{F}_0$, note for later that
		\begin{align}
			\text{if there is any flow through }v_0^-\text{, then it must be via some edge }s_iv_0^-\text{ with }i\ne i_0. \label{fact:incoming-edges-v0}
		\end{align}

		Given a flow~$f$ on~$\mathcal{F}$ or $\mathcal{F}_0$, for each $i,j \in [k]$, let $f_{ij}$ be the sum of flow over all edges from $s_i \cup X^+_{i*}$ to $ t_j \cup X^-_{*j}$, that is,  
		\begin{align}
			f_{ij} = \sum_{e \in E_F(s_i \cup X^+_{i*},  t_j \cup X^-_{*j})} f(e).\label{eqn:definition-of-f-ij}  
		\end{align}
		Note that the total amount of flow going out of $s_i$ (all going to $X^+$) is $\sum_{j\in[k]}f_{ij}$, and the total amount of flow into $t_j$ is $\sum_{i\in[k]}f_{ij}$.
		We now reduce the lemma to the following claim about fractional flows. 
		
		\begin{claim} \label{clm:fractionalflow}
			To prove the lemma, it suffices to find a fractional flow~$f$ on~$\mathcal{F}_0$ such that 
			\begin{align}
				\sum_{i,j \in [k]} \max\{ e(G_{ij})/d - f_{ij}, 0 \} < 1. \label{eqn:flow}
			\end{align}
		\end{claim}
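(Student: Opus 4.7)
Given the fractional flow $f$, the plan is to extend $\mathcal{F}_0$ to the flow network $\mathcal{F}^*$ described in the sketch preceding the claim: adjoin a global source $s^*$ with edges $s^*s_i$ of capacity $a_i:=\max\{|V_{i*}|-|V_{*i}|,0\}$, a global sink $t^*$ with edges $t_jt^*$ of capacity $b_j:=\max\{|V_{*j}|-|V_{j*}|,0\}$, and return edges $t_is_i$ of a large integer capacity. By Proposition~\ref{prop:partition-regular-graphs}, $\sum_i a_i=\sum_i b_i=:N$. The aim is to produce an integer maximum flow $F$ on $\mathcal{F}^*$ of value $N$ (which is also the upper bound given by the trivial cut around $s^*$) so that $F$ saturates every edge at $s^*$ and at $t^*$; the path system $\mathcal{Q}$ is then read off from~$F$.

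For the lower bound on max flow, I would analyse a minimum finite-capacity cut $(U,W)$ with $s^*\in U$ and $t^*\in W$. Writing $I:=\{i:s_i\in W\}$ and $J:=\{j:t_j\in U\}$, the infinite-capacity return edges force $I\cap J=\emptyset$, giving cut capacity $\sum_{i\in I}a_i+\sum_{j\in J}b_j+c(U,W)$, where $c(U,W)$ is the capacity of $\mathcal{F}_0$-edges crossing from $U$ to $W$. Since $f$ is a flow on $\mathcal{F}_0$, $c(U,W)$ is at least the net $f$-flow across the cut, namely $\sum_{i\notin I}\sum_j f_{ij}-\sum_{j\in J}\sum_i f_{ij}$. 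Combining this with the hypothesis $\sum_{ij}\max\{e(G_{ij})/d-f_{ij},0\}<1$ and the identity $\sum_j e(G_{ij})/d-\sum_j e(G_{ji})/d=a_i-b_i$ from Proposition~\ref{prop:partition-regular-graphs}, an algebraic manipulation should show the cut capacity strictly exceeds $N-1$; integrality of capacities then forces it to be at least $N$. Hence max flow equals $N$, and Theorem~\ref{thm:maxflow} yields an integer max flow $F$ saturating every $s^*$- and $t^*$-edge.

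Next, after applying Proposition~\ref{prop:flow-reduction} to kill any positive-flow cycles in $F$ without decreasing its value, I decompose $F$ into $N$ simple $s^*$-to-$t^*$ paths, each traversing at most $k$ ``segments'' through $\mathcal{F}_0$ (one per visited $s_i$). Each segment has one of the three types $s_ix^+t_j$, $s_ix^-t_j$, or $s_iu^+v^-t_j$, and collectively they yield sets $Y^+_{ij}\subseteq X^+_{ij}$, $Y^-_{ij}\subseteq X^-_{ij}$, and $M'_{ij}\subseteq M_{ij}$. Proposition~\ref{prop:extendingmatching} (with $W=\emptyset$; its degree hypothesis~\ref{itm:extendingmatchingc} follows from~\eqref{eqn:DEFINITION-X-ij}, (iii') and $d\ge\alpha n$) then produces $\mathcal{Q}\subseteq\mathcal{B}(G,\mathcal{P})$ with $e(\mathcal{Q}_{ij})=|Y^+_{ij}|+|Y^-_{ij}|+e(M'_{ij})=:f^*_{ij}$. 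Conservation at each $s_i,t_i$ combined with saturation of the $s^*,t^*$-edges gives $\sum_j f^*_{ij}-\sum_j f^*_{ji}=a_i-b_i=|V_{i*}|-|V_{*i}|$, so $\mathcal{Q}$ is $\mathcal{P}$-balanced; and $e(\mathcal{Q})=\sum_{ij}f^*_{ij}\le Nk\le k^2\gamma n$ since $N\le k\gamma n/2$ (as $a_i+b_i\le\gamma n$ by~\ref{itm:balancedpath3}) and each simple path contributes at most $k$ segments.

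For non-triviality, under~\ref{item:size-condition-V-ij} the bound $|V(\mathcal{Q})|\le 2e(\mathcal{Q})\le k^2\gamma n$ is strictly smaller than $\sum_{i\ne j}|V_{ij}|$, so some $V_{ij}$ with $i\ne j$ contains a vertex outside $V(\mathcal{Q})$. Under~\ref{item:degree-condition-v0}, the surgery in passing from $\mathcal{F}$ to $\mathcal{F}_0$ combined with the large out-excess~\ref{item:degree-condition-v0} ensures that any valid $f$ must route flow through $v_0^-$, and by~\eqref{fact:incoming-edges-v0} the corresponding segment $s_iv_0^-t_{j_0}$ has $i\ne i_0$; Proposition~\ref{prop:extendingmatching}\ref{itm:extendingmatchingiii} then lets us choose the extension vertex $u\in V_{ij''}\subseteq V_{i*}$ with $j''\ne i_0$ (possible thanks to~\ref{item:degree-condition-v0}), giving an edge $uv_0\in\mathcal{Q}$ which is a path from $V_{*j''}$ to $V_{i_0*}$ with $j''\ne i_0$. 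The main obstacle I expect is the min-cut calculation of the second paragraph: converting the global $<1$ deficit into a strict cut-by-cut lower bound of $>N-1$ requires careful bookkeeping of each summand in the cut capacity and of the interaction with $I\cap J=\emptyset$; the non-triviality extension under~\ref{item:degree-condition-v0} is a secondary technical point.
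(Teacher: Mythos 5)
Your overall architecture matches the paper's (build $\mathcal{F}^*$, obtain an integer flow saturating the $s^*$- and $t^*$-edges, read off $Y^\pm_{ij}$ and $M^*_{ij}$, apply Proposition~\ref{prop:extendingmatching}), but you prove the existence of the saturating integer flow by a direct min-cut analysis, whereas the paper adds auxiliary edges $s_it_j$ of capacity~$1$ to form $\mathcal{F}^+$, writes down an explicit fractional flow $f^+$ of value $N=\sum_i \left||V_{i*}|-|V_{*i}|\right|/2$ on $\mathcal{F}^+$, and then uses Proposition~\ref{prop:flow-reduction} to kill the flow on the $s_it_j$ edges (losing less than~$1$) before invoking integrality. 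Your min-cut route does work: with the normalisation $f_{ij}\le e(G_{ij})/d$ (which the paper makes explicit as a WLOG and which your bookkeeping tacitly needs), one can verify that for any finite cut with $I=\{i:s_i\in W\}$, $J=\{j:t_j\in U\}$ and $I\cap J=\emptyset$, the capacity exceeds $N-1$; integrality of capacities then gives a cut of capacity $\ge N$, and the trivial cut at $s^*$ has capacity exactly~$N$. Likewise your edge bound $e(\mathcal{Q})\le Nk$ via the path decomposition of the flow is a workable alternative to the paper's application of Proposition~\ref{prop:subgraph} to $\mathcal{Q}$.

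The genuine gap is in the non-triviality argument under \ref{item:degree-condition-v0}. You fix $W=\emptyset$ throughout, but the paper sets $W=\bigcup_{i\ne j}V_{ij}$ precisely when $\sum_{i\ne j}|V_{ij}|\le k^2\gamma n$ (the regime in which \ref{item:degree-condition-v0} must be used). That choice of $W$ is doing two essential jobs. First, Proposition~\ref{prop:extendingmatching}\ref{itm:extendingmatchingii} then forces any $v_0\in V(\mathcal{Q})\cap W$ to lie in $\bigcup_{i,j}(Y^+_{ij}\cup Y^-_{ij}\cup V(M^*_{ij}))$, which is what allows one to say the integer flow routes through $v_0^-$; with $W=\emptyset$ this conclusion is vacuous, and $v_0$ could enter $\mathcal{Q}$ merely as a greedy extension vertex of some other $y^\pm$, in which case the flow never touches $v_0^-$ and your premise ``any valid $f$ must route flow through $v_0^-$'' is simply false (there is no such constraint on the integer max flow). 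Second, Proposition~\ref{prop:extendingmatching}\ref{itm:extendingmatchingiii} then places the extension vertex $u$ in $V_{i*}\setminus W=V_{ii}$, so that $uv_0$ runs from $V_{*i}$ to $V_{i_0*}$ with $i\ne i_0$ and is therefore a non-trivial path. With $W=\emptyset$, Proposition~\ref{prop:extendingmatching} only gives $u\in V_{i*}$; it could be that $u\in V_{ii_0}$, in which case $uv_0$ goes from $V_{*i_0}$ to $V_{i_0*}$ and does not certify non-triviality. Hypothesis \ref{item:degree-condition-v0} (large out-excess of $v_0$) gives you no leverage here, because the extension edge $uv_0$ uses inneighbours of $v_0$, and more importantly one cannot ``choose'' which $V_{ij''}$ the greedy extension uses unless one excludes the unwanted vertices via $W$. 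To repair the argument you should adopt the paper's case-dependent $W$ (noting that $|W|\le k^2\gamma n$ is still absorbed by the $\gamma^{1/3}d$ slack in \ref{itm:extendingmatchingc}), at which point both points above go through exactly as in the paper.
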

		
		\begin{proofclaim} 
			Assume $f$ is as given in the claim and we wish to prove the lemma.
			We may assume that $f_{ij} \le   e(G_{ij})/d$ for all $i,j \in [k]$.\footnote{Otherwise we could suitably decrease flow along paths $P$ with positive flow from any source to any sink that uses an edge from $E_F(s_i\cup X_{i*}^+, t_j\cup X_{*j}^-)$, and this can be done independently for each $i,j\in[k]$, since the $E_F(s_i\cup X_{i*}^+, t_j\cup X_{*j}^-)$ are disjoint over all $i,j\in[k]$.}
			We define a new flow network $\mathcal{F}^* = (F^*, w^*, s^*,t^*)$ with a single source~$s^*$ and sink~$t^*$ as follows. 
			We obtain $\mathcal{F}^*$ from~$\mathcal{F}_0$ by adding the new vertices~$s^*$ and~$t^*$ to~$V(F_0)$ and, for each $i \in [k]$,
			\begin{itemize}
				\item adding the edge $s^* s_i$ of capacity $\max \{|V_{i*}| - |V_{*i}|, 0\}$;
				\item adding the edge $t_i t^*$ of capacity $\max \{|V_{*i}| - |V_{i*}|, 0\}$;
				\item adding an edge $t_i s_i$ of infinite capacity; 
				\item giving the vertices in $S \cup T$ infinite capacity.
			\end{itemize}
			
			We will use $f$ to find an integer flow in~$\mathcal{F}^*$ of value $ \sum_{i \in [k]} \left| |V_{i*}| - |V_{*i}|\right|/2$. 
			Then, by looking at the vertices through which there is non-zero flow, we will define the sets $Y_{ij}^+$, $Y_{ij}^+$, and~$M_{ij}^*$ satisfying the properties for Proposition~\ref{prop:extendingmatching}, which will give the desired path system~$\mathcal{Q}$.
			
			First we find a fractional flow on~$\mathcal{F}^*$ as follows.
			Let $\mathcal{F}^+ = (F^+, w^+,s^*,t^*)$ be obtained from $\mathcal{F}^*$ by adding, for each $i,j \in [k]$, an edge $s_i t_j$ with edge capacity~$1$. 
			Define a flow~$f^+$ on $\mathcal{F}^+$ such that, for all $i,j \in [k]$, 
			\begin{itemize}
				\item $f^+(e) = f(e)$ for all $e \in E(F_0)$;
				\item $f^+(s^* s_i)  = \max \{|V_{i*}| - |V_{*i}|, 0\} $;
				\item $f^+(t_j t^*) = \max \{|V_{*j}| - |V_{j*}|, 0\}$;
				\item $f^+(t_i s_i) = \min  \{ e(G_{i*})/d, e(G_{*i})/d\}$;
				\item $f^+(s_i t_j) = e(G_{ij})/d - f_{ij}\ge 0$.
			\end{itemize}
			One can check that $f^+$ is indeed a fractional flow\footnote{Since $f$ is a fractional flow on~$\mathcal{F}_0$, it suffices to examine only $s_i$ and $t_i$ for $i\in[k]$. For $s_i$, the amount of flow going out of $s_i$ is $\sum_{j\in[k]}f_{ij}$ (from $s_i$ to $X^+$) and $\sum_{j\in[k]}\left(e(G_{ij})/d-f_{ij}\right)$ (from $s_i$ to $T$), and the flow going into $s_i$ is $\min\{e(G_{i*})/d,e(G_{*i})/d\}$ (from $t_i$ to $s_i$) and $\max\{|V_{i*}|-|V_{*i}|,0\}$ (from $s^*$ to $s_i$). By noting that~\eqref{eqn:subgraph1} gives $e(G_{i*})/d-e(G_{*i})/d=|V_{i*}|-|V_{*i}|$, the total contribution is 0. For $t_i$, the calculation is similar.} on $\mathcal{F}^+$ with value $\sum_{i \in [k]} \left| |V_{i*}| - |V_{*i}|\right|/2$. 
			By applying Proposition~\ref{prop:flow-reduction} to reduce the flow in $s_it_j$ to~$0$ for all $i,j\in[k]$, we obtain a fractional flow on~$\mathcal{F}^*$ of value 
			\begin{align*}
				r & \geq \frac12\sum_{i \in [k]} \left| |V_{i*}| - |V_{*i}|\right| - \sum_{i,j \in [k]} f^+(s_it_j)
				= \frac12 \sum_{i \in [k]} \left| |V_{i*}| - |V_{*i}|\right| - \sum_{i,j \in [k]} \left( \frac{e(G_{ij})}d - f_{ij} \right)\\
				& \overset{\text{\eqref{eqn:flow}}}{>}  \frac12\left(\sum_{i \in [k]} \left| |V_{i*}| - |V_{*i}|\right|\right)-1.   
			\end{align*}

			Note that all edge capacities are integral, so Theorem~\ref{thm:maxflow} implies there exists an integral flow~$f^*$ on~$\mathcal{F}^*$ of value at least $\lceil r \rceil =  \sum_{i \in [k]} \left| |V_{i*}| - |V_{*i}|\right|/2$. 
			Moreover,  since the cuts $E_{F^*}(s^*, V(F^*) \setminus \{s^*\})$ and $E_{F^*}( V(F^*) \setminus \{
			t^*\}, t^*)$ have capacity $\sum_{i \in [k]} \left| |V_{i*}| - |V_{*i}|\right|/2$, Theorem~\ref{thm:maxflow} implies that $f^*$ saturates all the edges $s^*s_i$ and $t_jt^*$ for all $i,j \in [k]$.

			We now define $Y_{ij}^+ \subseteq X_{ij}^+$, $Y_{ij}^-\subseteq X_{ij}^-$ and~$M^*_{ij} \subseteq M_{ij}$ as follows. 
			Initially, set $Y_{ij}^+ = Y_{ij}^- = M_{ij}= \emptyset$ for all $i,j \in [k]$. 
			For each $x^+\in X^+$ with $f^*(x^+) = 1$, since $f^*$ is an integral flow, there exist unique $i,j\in[k]$ such that either $f^*(s_i x^+)=f^*(x^+t_j)=1$ or $f^*(s_i x^+)=f^*(x^+z)=f^*(zt_j)=1$ for some (unique) $z\in X^-$. In the former case we add~$x^+$ to~$Y_{ij}^+$ while we add~$x^+z$ into~$M_{ij}^*$ for the latter. In a similar way, for $x^- \in X^-$ with $f^*(x^-) = 1$, we either add $x^-$ to the set $Y_{ij}^-$ or add an edge ending at $x^-$ into $M_{ij}^*$.
			Note that 
			\begin{align}
				|Y_{ij}^+| + |Y_{ij}^-| + e (M^*_{ij})  =\sum_{e\in E_{F^*}(s_i\cup X_{i*}^+,t_j\cup X_{*j}^-)}f^*(e)     
				\eqqcolon f^*_{ij}. \label{eqn:flow2}
			\end{align}
			Clearly $ M^* : = \bigcup_{i,j \in[k]} M^*_{ij} \subseteq M$ is a matching by~\ref{itm:M}.
			Both of $\{Y_{ij}^+, V^+(M_{ij}^*) : i,j \in [k] \}$ and $\{ Y_{ij}^-, V^-(M_{ij}^*) : i,j \in [k] \}$ are sets of disjoint sets as each $x \in X^+ \cup X^-$ has vertex capacity of one.
			Set 
			\begin{align*}
				W =
				\begin{cases}
					\bigcup_{i,j \in [k] \colon i \ne j} V_{ij}  & \text{if $\sum_{i,j \in [k] \colon i \ne j} |V_{ij}| \le k^2 \gamma n$,}\\
					\emptyset & \text{otherwise.}
				\end{cases}
			\end{align*}
			For all $y^+ \in Y_{ij}^+ \subseteq X_{ij}^+$, we have 
			\begin{align*}
				d_{G_{ij}}^+(y^+) & \overset{\mathclap{\text{\eqref{eqn:DEFINITION-X-ij}}}}{\ge}   \gamma^{1/3} d \overset{\mathclap{\text{\ref{itm:size}}}}{\ge} 
				2\sum_{i,j \in [k]} \left( |X_{ij}^+| + |X_{ij}^-| + e(M_{ij})  \right)+ \gamma^{1/3} d/3
				\\   & 
				\ge    
				2\sum_{i,j \in [k]} \left( |Y_{ij}^+| + |Y_{ij}^-| + e(M^*_{ij}) \right)+ |W|+1
			\end{align*}
			since $k^2\gamma n+1\le 2k^2\gamma n\le \gamma^{1/3}d/3$ as $\gamma \ll \alpha \le d/n$, and a similar inequality holds for all $y^- \in Y_{ij}^-$.
			We apply Proposition~\ref{prop:extendingmatching} and obtain a path system~$\mathcal{Q}$ of~$G$ such that 
			\begin{enumerate}[label = {\rm(\roman*$''$)}]
				\item \label{itm:flow3} $e(\mathcal{Q}_{ij}) = |Y_{ij}^+| + |Y_{ij}^-| + e(M^*_{ij}) \overset{\mathclap{\text{\eqref{eqn:flow2}}}}{=} f^*_{ij}$ for all $i,j \in [k]$;
				\item  $V(\mathcal{Q})\cap W \subseteq \bigcup_{i,j \in [k]} (Y_{ij}^+ \cup Y_{ij}^- \cup V(M^*_{ij})) $;
				\label{item:path-system-2}
				\item if $y^+\in Y_{ij}^+\setminus V^-(M^*)$, then there exists $u\in V_{*j}\setminus W$  such that the single edge~$y^+u$ is a path in~$\mathcal{Q}$, and a similar statement for $y^-\in Y_{ij}^-\setminus V^+(M^*)$ holds.\label{item:path-system-3}
			\end{enumerate}

			Consider $i \in [k]$.
			Recall that $N^-_{F^*}(s_i) = \{s^*, t_i\}$, $ N^+_{F^*}(s_i) \subseteq X^+_{i*} \cup X^-$, and each vertex in $X^+$ has capacity~one. 
			We have
			\begin{align*}
				f^*(s^*s_i) + f^*(t_is_i) 
				& = \sum_{u \in N^-_{F^*}(s_i)} f^*( u s_i)
				= \sum_{v \in N^+_{F^*}(s_i)} f^*(  s_i v )\\
				&= \sum_{v \in X^+_{i*} \cup X^-} f^*(  s_i v )
				= \sum_{j \in [k]} \sum_{e \in E_{F^*} (s_i \cup X^+_{i*},  t_j \cup X^-_{*j})} f^*(e)
				= \sum_{j \in [k]} f^*_{ij}
				\overset{\mathclap{\text{\ref{itm:flow3}}}}{=}  \sum_{j \in [k]} e(\mathcal{Q}_{ij}) 
			\end{align*}
			and similarly we have $	f^*(t_it^*) + f^*(t_is_i) = \sum_{j \in [k]} e(\mathcal{Q}_{ji}) $.
			Since $f^*$  saturates the edges $s^*s_i$ and~$t_it^*$, we deduce that,
			\begin{align*}
				\sum_{j \in [k]} e(\mathcal{Q}_{ij}) - \sum_{j \in [k]} e(\mathcal{Q}_{ji})  &=f^*(s^*s_i)-f^*(t_it^*)
				= w^*(s^*s_i)-w^*(t_it^*)
				\\
				&=\max \{|V_{i*}| - |V_{*i}|, 0\}-\max \{|V_{*i}| - |V_{i*}|, 0\}
				= |V_{i*}|-|V_{*i}|
			\end{align*}
			implying that $\mathcal{Q}$ is $\mathcal{P}$-balanced, as required. 
			By applying Proposition~\ref{prop:subgraph} (with $d = 1$ and $G^0=\mathcal{Q}$), we may further assume that 
			\begin{align}
				e(\mathcal{Q})  \le (k-1) \sum_{i \in [k]} | |V_{i*}|-|V_{*i}| |/2 
				\overset{\mathclap{\text{\ref{itm:balancedpath3}}}}{\le} k^2  \gamma n
				. \label{eqn:flow4}
			\end{align}
			We now check the moreover statement of the Lemma~\ref{lem:balancedpath}. 
			If $\sum_{i,j \in [k] \colon i \ne j} |V_{ij}| > k^2\gamma n$ (that is, \ref{item:size-condition-V-ij} holds), then \eqref{eqn:flow4} implies that $\bigcup_{i,j \in [k] \colon i \ne j} V_{ij} \setminus V(\mathcal{Q}) \ne \emptyset$, so $\mathcal{Q}$ is non-trivial. 
			Suppose that $\sum_{i,j \in [k] \colon i \ne j} |V_{ij}| \le  k^2 \gamma n$ and there exists $v_0\in V_{i_0j_0}$ satisfying~\ref{item:degree-condition-v0}. 
			If $v_0 \notin V(\mathcal{Q})$, then $\mathcal{Q}$ is non-trivial.
			So suppose that $v_0 \in V(\mathcal{Q})$. 
			Since $v_0 \in W$,  by~\ref{item:path-system-2} we must have $v_0 \in Y_{ij}^+ \cup Y_{ij}^- \cup V(M_{ij}^*)$ for some $i,j \in [k]$. 
			This means that there is a flow through $v_0^-$ as $v_0^+ \notin V(F^*)$.
			By~\eqref{fact:incoming-edges-v0}, $v_0 \in Y_{ij_0}^-$ for some $i \ne i_0$. 
			Also, $v_0^+\notin V(F^*)$ implies that $v_0\notin V^+(M^*)$.
			By~\ref{item:path-system-3}, we deduce that there exists $u \in V_{i*} \setminus W = V_{ii}$ such that $uv_0$ is a path in~$\mathcal{Q}$.
			Note that $uv_0$ is a path from~$V_{ii} \subseteq V_{*i}$ to~$V_{i_0j_0} \subseteq V_{i_0 *}$. 
			Hence $\mathcal{Q}$ is non-trivial, as required. 
		\end{proofclaim}
		
		\noindent \textbf{Step 4: Define a distribution function on $X^+ \cup X^-$.}
		For each $i \in [k]$, $x^+ \in X^+_{i*}$ and $x^- \in X^-_{*i}$, $j \in [k] \setminus \{ i\}$, we set
		\begin{align*}
			p_j({x^+})	 = \max \{ d_{G_{ij}}^+(x^+)/d , 6 k^2 \gamma^{1/3}\} \,\,\text{ and }\,\,
			p_j({x^-})	 = \max \{d_{G_{ji}}^-(x^-)/d , 6 k^2 \gamma^{1/3}\}.
		\end{align*}
		Given $x^+ \in X^+_{i*}$, we `view' $p_j({x^+})$ to be flow through~$x^+$ from~$s_i$ to~$t_j$, and similarly for $p_j({x^-})$.

		\begin{claim} \label{clm:probability}
			For each $i \in [k]$ and $x^+ \in X^+_{i*}$, 
			\begin{align*}
				\sum_{j \in [k] \setminus \{i\} } p_j({x^+}) \le 1 - \frac{d_{G^0}^+(x,V_{*i})}{d} + 6 k^3 \gamma^{1/3} < 1
			\end{align*}
			and a similar statement holds for each $j \in [k]$ and $x^- \in X^-_{*j}$. 
		\end{claim}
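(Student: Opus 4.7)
The plan is a direct calculation using the $d$-regularity of $G^0$ together with hypothesis~(i) of Lemma~\ref{lem:balancedpath}. Since $x^+ \in X^+_{i*} \subseteq V_{i*}$ and $G^0$ is $d$-regular, decomposing the out-neighbourhood of $x$ according to the partition $V(G^0) = V_{*1} \cup \dots \cup V_{*k}$ gives
\[ d^+_{G^0}(x, V_{*i}) + \sum_{j \in [k] \setminus \{i\}} d^+_{G^0}(x, V_{*j}) = d. \]
Moreover, since $G \subseteq \mathcal{B}(G^0, \mathcal{P})$, for every $j \ne i$ we have $d^+_{G_{ij}}(x) \le d^+_{G^0}(x, V_{*j})$.

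Using the trivial bound $\max\{a,b\} \le a + b$ on the definition of $p_j(x^+)$, I would then write
\[ p_j(x^+) \le \frac{d^+_{G_{ij}}(x)}{d} + 6k^2\gamma^{1/3} \le \frac{d^+_{G^0}(x, V_{*j})}{d} + 6k^2\gamma^{1/3}, \]
and summing over the $k-1$ indices $j \ne i$ together with the displayed identity above yields
\[ \sum_{j \in [k] \setminus \{i\}} p_j(x^+) \le 1 - \frac{d^+_{G^0}(x, V_{*i})}{d} + 6k^3\gamma^{1/3}, \]
which is the first inequality of the claim. For the strict inequality $< 1$, I would invoke hypothesis~(i) of Lemma~\ref{lem:balancedpath}: $\delta(G^0[V_{i*}, V_{*i}]) \ge d/k$ translates, via the definition of the auxiliary bipartite graph $G^0[V_{i*},V_{*i}]$, to $d^+_{G^0}(x, V_{*i}) \ge d/k$ for every $x \in V_{i*}$. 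Combined with $\gamma \ll 1/k$ (so that $6k^3\gamma^{1/3} < 1/k$), this gives the strict bound.

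The ``similar statement'' for $x^- \in X^-_{*j}$ follows by exactly the same argument applied to in-degrees: $x \in V_{*j}$ has total in-degree $d$ in $G^0$, distributing as $d^-_{G^0}(x, V_{j*}) + \sum_{i \ne j} d^-_{G^0}(x, V_{i*}) = d$; bound $p_i(x^-) \le d^-_{G^0}(x, V_{i*})/d + 6k^2\gamma^{1/3}$; sum; and use $d^-_{G^0}(x, V_{j*}) \ge d/k$ from the same minimum-degree hypothesis to conclude. There is no substantive obstacle here; the claim is a bookkeeping step that trades degrees in the sparse subgraph $G$ for degrees in the ambient regular digraph $G^0$, with the error terms $(k-1)\cdot 6k^2\gamma^{1/3}$ absorbed into $6k^3\gamma^{1/3}$. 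The only subtlety is to notice that hypothesis~(i) is exactly what ensures the $d/k$ lower bound dominates the $6k^3\gamma^{1/3}$ slack.
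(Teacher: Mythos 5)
Your proof is correct and follows essentially the same route as the paper: bound each $p_j(x^+)$ by $\max\{a,b\}\le a+b$, compare $d^+_{G_{ij}}(x)$ to $d^+_{G^0}(x, V_{*j})$ using $G\subseteq G^0$, sum and use $d$-regularity, then invoke hypothesis~(i) of Lemma~\ref{lem:balancedpath} and $\gamma\ll 1/k$ for the strict inequality. The only difference is cosmetic bookkeeping (you compare each degree term-by-term before summing, whereas the paper sums the $d^+_{G_{ij}}(x^+)$ first).
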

		\begin{proofclaim}
			We will only consider the case when $x^+ \in X^+_{i*}$ as the other case when $x^- \in X^-_{*j}$ can be proved similarly.
			Since $G$ is a subgraph of~$G^0$, we have 
			\begin{align*}
				\sum_{j \in [k]\setminus\{i\} } d_{G_{ij}}^+(x^+) \le \sum_{j \in [k]\setminus\{i\} } d_{G_{ij}^0}^+(x^+) = d-d_{G^0}^+(x,V_{*i}) .
			\end{align*}
			Hence
			\begin{align*}
				\sum_{j \in [k]\setminus \{i\} } p_j({x^+}) 
				& \le \sum_{j \in [k]\setminus \{i\} }  \left( \frac{d_{G_{ij}}^+(x^+)}d + 6 k^2 \gamma^{1/3} \right)
				\le  \frac{\sum_{j \in [k]\setminus \{i\}} d_{G_{ij}}^+(x^+)} d + 6 k^3 \gamma^{1/3}\\
				& \le 1 - \frac{d_{G^0}^+(x,V_{*i})}{d} + 6 k^3 \gamma^{1/3} <1,
			\end{align*}
			where the last inequality holds as $d_{G^0}^+(x,V_{*i})\ge d/k$ by~\ref{item:good-degree-condition} and $\gamma\ll 1/k$.
		\end{proofclaim}

		\noindent \textbf{Step 5: Defining fractional flow on $\mathcal{F}_0$.}
		We first define the fractional flow~$f$ on~$\mathcal{F}$ as follows.
		For each $i,j \in [k]$, we do the following
		\begin{itemize}
			\item for each $x^+ \in X_{ij}^+ \subseteq V_{i*}$, add a flow through $s_{i} x^+ t_{j} $ of value~$p_j(x^+)$; 
			\item for each $x^- \in X_{ij}^- \subseteq V_{*j}$, add a flow through $s_{i} x^- t_{j}$ of value~$p_i(x^-)$;
			\item for each edge $e=uv \in M_{ij}$, add a flow through $s_{i} u^+  v^- t_{j}$ of value~$6 k^2 \gamma^{1/3}$.
		\end{itemize}
		Our construction and Claim~\ref{clm:probability} imply that each vertex $v \in X^+ \cup X^- $ has a flow of value at most~$1$ through it. 
		Thus $f$ is indeed a fractional flow on~$\mathcal{F}$.
		For each $i,j \in [k]$, 
		\begin{align}
			d f_{ij} 
			& 
			\overset{\mathclap{\text{\eqref{eqn:definition-of-f-ij}}}}{=}
			d \left( \sum_{x^+ \in X_{ij}^+} p_j(x^+) + \sum_{x^- \in X_{ij}^-} p_i(x^-) + \sum_{uv \in M_{ij}} 6 k^2 \gamma^{1/3} \right) \nonumber \\
			& \ge \sum_{x^+ \in X_{ij}^+} d_{G_{ij}}^+(x^+) +  \sum_{x^- \in X_{ij}^-} d_{G_{ij}}^-(x^-) +   (6 k^2 \gamma^{1/3}  d) e( M_{ij} )
			\overset{\mathclap{\text{\ref{itm:Msize}}}}{\ge} e(G_{ij})  - 50 k^{10} \gamma^{1/3}d. \nonumber
		\end{align}
		Therefore, 
		\begin{align*}
			\sum_{i,j \in [k]} \max\{ e(G_{ij})/d - f_{ij}, 0 \}	\le 50 k^{12} \gamma^{1/3} <1 .
		\end{align*}
		If there is no $v_0$ satisfying~\ref{item:degree-condition-v0}, then $\mathcal{F}_0 = \mathcal{F}$ and we are done by~Claim~\ref{clm:fractionalflow}.
		If such a $v_0$ exists, then recall that $F_0$ is obtained from~$F$ removing the vertex~$v_0^+$,  the edge $s_{i_0} v_0^-$ and edges in~$E_F(X^+,v_0^-)$.
		By repeated application of Proposition~\ref{prop:flow-reduction}, we obtain a flow in $\mathcal{F}_0$ whose value is lower than that of $f$ by at most
		\begin{align*}
			& \sum_{j \in [k]\setminus \{ i_0 \} } p_j(v_0^+) + p_{i_0}(v_0^-) + 2\cdot   6 k^2 \gamma^{1/3}\\
			& \overset{\mathclap{\text{Claim~\ref{clm:probability}}}}{\le}  1 - \frac{d_{G^0}^+(v_0,V_{*i_0})}{d} + 6 k^3 \gamma^{1/3} + \left( \frac{d^-_{G_{i_0j_0}}(v_0)}{d} + 6 k^2 \gamma^{1/3} \right) + 	 12k^2  \gamma^{1/3}
			\\
			&\le  1 - \frac{d_{G^0}^+(v_0,V_{*i_0})}{d}+\frac{d^-_{G^0}(v_0,V_{i_0*})}{d} +24k^3\gamma^{1/3}
			\\
			&  \overset{\mathclap{\text{\ref{item:degree-condition-v0}}}}{\le}  1 -  100 k^{12} \gamma^{1/3}+24k^3\gamma^{1/3}< 1-50k^{12}\gamma^{1/3}.
		\end{align*}
		Therefore, $\sum_{i,j \in [k]} \max\{ e(G_{ij})/d - f_{ij}, 0 \}$ is still less than~$1$. 
		Hence we are done by Claim~\ref{clm:fractionalflow}.
	\end{proof}

	%%%%%%%%%%%%%%%%%%%%%%%%%%%%%%%%%%%%%%%%%%%%%%%%%%%%%%%%%%%%%%%%%%%%%%%%%%%%%%%%%%%%%%%%%%%%%%%%%%%%%%%
	
	\section{Proof of Lemma~\ref{lma:regdigraph}}
	\label{sec:pf_regdigraph}

	We begin with an outline of the proof of the Lemma~\ref{lma:regdigraph}.
	Let $G$ and $\mathcal{P}$ be the digraph and vertex partition as in the statement of Lemma~\ref{lma:regdigraph}.
	Suppose, contrary to the lemma, that there is no non-trivial $\mathcal{P}$-balanced path system.

	Consider distinct $i,j \in [k]$.
	Note that if there exist two vertex-disjoint edges $e_1 \in E(G_{ij})$ and $e_2 \in E(G_{ji})$, then $\{e_1,e_2\}$ forms a non-trivial $\mathcal{P}$-balanced path system.
	Thus, if $e(G_{ij}), e(G_{ji}) \ge 3$, then we may assume that there exists a vertex $ w_{ij} \in V_{ii} \cup V_{jj}$ that is contained in all edges of~$G_{ij} \cup G_{ji}$. 
	Let $H$ be the oriented graph on~$[k]$ where $ij \in E(H)$ if $e(G_{ij}), e(G_{ji}) \ge 3$ and $w_{ij} \in V_{jj}$; we give weight $w(ij):=|E(G_{ij}\cup G_{ji})|$ to $ij$.\footnote{Note that it is convenient to introduce the weight $w(ij)$ (and later $w(i)$) for the sketch of proof; these are not used in the actual proof.}  It turns out that the underlying undirected graph of $H$ is acyclic (see Claim~\ref{clm:cyclesize}) and that total weight of edges in $H$ is almost equal to~$|\mathcal{B}(G, \mathcal{P})|$ (see~\eqref{eqn:e(G*)}).
	Set $w(i)=\sum_{j\in [k]}(w(ij)-w(ji))$. 
	
	We focus on the $V_{ii}$ that are relatively small: let $c_i=qd+1-|V_{ii}|$ (where we set $q=2$ if $G$ is an oriented graph and $q=1$ otherwise) and after relabelling indices assume $ |\{i \in [k]: c_i\geq 0\}| = [k_0]$. We will lower and upper bound $\sum_{i\in[k_0]}w(i)$.
	For the lower bound, we note that if $c_i>0$ then every vertex of $V_{ii}$ has at least one edge from outside~$V_{ii}$, and so we can lower bound $w(i)$ in terms of $c_i$ for each $i\in [k_0]$ (see \eqref{eqn:e(G[V'_i,barV_i])}).
	For the upper bound, we note that $w(ij)$ can be upper bounded using~\ref{item:degree-is-large-inside-diagonal} (see~\eqref{eqn:e(Gij)+e(Gji)}). Then, as $H$ is acyclic (so has few edges), we are able to find a good upper bound for $\sum_{i\in[k_0]}w(i)$ by considering connected components (i.e.~trees) in $H[[k_0]]$. Combining the lower and upper bounds, we obtain $\sum_{i\in[k_0]}c_i\leq k-k_0-1$ (see \eqref{eqn:sum-of-components-vertices-k0}) whereas \ref{item:relation-n-d-k} implies $\sum_{i\in[k_0]}c_i>k-k_0$, a contradiction.

	Recall that for a digraph~$G$ and $A,B \subseteq V(G)$ not necessarily disjoint, we write $E_G(A,B):= \{ab \in E(G): a \in A, \; b \in B \}$ and $e_G(A,B) := |E_G(A,B)|$.
	We write $E_G(A)$ and $e_G(A)$ for $E_G(A,A)$ and~$e_G(A,A)$, respectively.

	\begin{proof}[Proof of Lemma~\ref{lma:regdigraph}]
		Assume $G$ and $\mathcal{P}$ satisfy \ref{item:each-part-is-at-least-d-over-2}--\ref{item:degree-is-large-inside-diagonal} in the statement of Lemma~\ref{lma:regdigraph}. 
		Let $q =2$ if $G$ is an oriented graph and $q =1$ otherwise, so \ref{item:relation-n-d-k} says that $n<(qd + 1)k$. 
		We write $V_i$ instead of~$V_{ii}$ for $i \in [k]$, so that, for all distinct $i,j\in[k]$, $G_{ij}$ becomes the bipartite digraph with vertex classes $V_i$ and~$V_j$ and edges given by the edges of $G$ from~$V_{i}$ to~$V_{j}$.
		In this context (where $V_{ij}=\emptyset$ for all distinct $i,j\in[k]$), $\mathcal{Q}$ is a $\mathcal{P}$-balanced path system if, for each $i\in[k]$, the number of edges of~$\mathcal{Q}$ going into $V_i$ is equal to the number of edges of~$\mathcal{Q}$ leaving~$V_i$. Also, a $\mathcal{P}$-balanced path system is non-trivial if it has at least one path whose endpoints lie in distinct parts of the partition $\mathcal{P}=\{V_i:i\in[k]\}$. Now suppose for a contradiction that $G$ does not contain any non-trivial $\mathcal{P}$-balanced path system with at most $k$ edges.

		\begin{claim} \label{clm:cyclesize}
			Let $H_0$ be a digraph on~$[k]$ such that $ij \in E(H_0)$ if $e(G_{ij}) \ge 3$ and $i\neq j$.
			Then $H_0$ does not have any directed cycle of length at least~$3$.
		\end{claim}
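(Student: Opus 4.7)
The plan is to argue by contradiction. Suppose $H_0$ contains a directed cycle $i_1 \to i_2 \to \cdots \to i_\ell \to i_1$ of length $\ell \geq 3$, so that $e(G_{i_s i_{s+1}}) \geq 3$ for every $s \in [\ell]$ (indices read modulo $\ell$). The plan is to build a non-trivial $\mathcal{P}$-balanced path system $\mathcal{Q}$ in $G$ with $e(\mathcal{Q}) = \ell \leq k$, contradicting the standing assumption of the proof.

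The construction selects one edge $e_s = x_s y_s \in E(G_{i_s i_{s+1}})$ for each $s$ (so $x_s \in V_{i_s}$, $y_s \in V_{i_{s+1}}$) and takes $\mathcal{Q}$ to be the collection of maximal directed sub-walks formed by these $\ell$ edges, where two consecutive edges $e_s, e_{s+1}$ are merged into a single sub-walk precisely when $y_s = x_{s+1}$. Because the parts $V_{i_1}, \ldots, V_{i_\ell}$ are pairwise disjoint (recall $V_{ij} = \emptyset$ for $i \neq j$ in the setting of this lemma), any such $\mathcal{Q}$ is automatically $\mathcal{P}$-balanced---each $V_{i_s}$ has exactly one entering edge $e_{s-1}$ and one leaving edge $e_s$, while all other parts are untouched---and non-trivial, since each sub-walk runs between distinct parts $V_{*j}$ and $V_{i*}$ with $i \neq j$. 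The only remaining concern is that the $\ell$ edges decompose into genuine vertex-disjoint \emph{paths} rather than forming a single closed cycle; this is guaranteed as long as $x_{s^*} \neq y_{s^*-1}$ for at least one index $s^*$.

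The main step is therefore to produce such an $s^*$. For each $s$, let $A_s \subseteq V_{i_s}$ denote the set of start-vertices of edges in $G_{i_s i_{s+1}}$ and $D_s \subseteq V_{i_{s+1}}$ the set of end-vertices, so $|A_s| \cdot |D_s| \geq e(G_{i_s i_{s+1}}) \geq 3$, and in particular at least one of $|A_s|, |D_s|$ is at least $2$. If one had $|A_s| = |D_{s-1}| = 1$ simultaneously for every $s$, then shifting the index would give $|A_s| = |D_s| = 1$ for every $s$, forcing each $G_{i_s i_{s+1}}$ to consist of a single possible edge and contradicting $e(G_{i_s i_{s+1}}) \geq 3$. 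Hence there exists some $s^*$ falling into one of three sub-cases: $|A_{s^*}| \geq 2$; or $|D_{s^*-1}| \geq 2$; or $|A_{s^*}| = |D_{s^*-1}| = 1$ with their unique elements distinct. In each sub-case a one-line selection chooses $e_{s^*-1}$ and $e_{s^*}$ with $x_{s^*} \neq y_{s^*-1}$, and then the remaining edges $e_s$ are picked arbitrarily from $E(G_{i_s i_{s+1}})$.

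The main (minor) obstacle is the ``cycle-breaks-into-paths'' bookkeeping, but this is essentially automatic here: edges coming from different $G_{i_s i_{s+1}}$ can only share a vertex at a junction $V_{i_s}$ between $e_{s-1}$ and $e_s$ (because the parts $V_{i_1}, \ldots, V_{i_\ell}$ are disjoint), so the absence of an overlap at the single junction $V_{i_{s^*}}$ already guarantees that the overall closed walk falls apart into a non-empty vertex-disjoint collection of directed paths possessing all required properties.
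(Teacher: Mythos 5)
There is a genuine gap in your non-triviality argument. You claim that $\mathcal{Q}$ is ``non-trivial, since each sub-walk runs between distinct parts $V_{*j}$ and $V_{i*}$ with $i\neq j$,'' but this fails precisely when your construction produces exactly one break. Your argument guarantees only one index $s^*$ with $x_{s^*}\neq y_{s^*-1}$, and all the remaining edges are chosen arbitrarily, so they may well chain together. In that case $\mathcal{Q}$ is a single path $x_{s^*}\to y_{s^*}=x_{s^*+1}\to\cdots\to y_{s^*-1}$, and both of its endpoints $x_{s^*}$ and $y_{s^*-1}$ lie in the same part $V_{i_{s^*}}$. By the definition used in this lemma (every path's endpoints in the same $V_i$, and $V_{ij}=\emptyset$ for $i\neq j$), such a $\mathcal{Q}$ is a \emph{trivial} $\mathcal{P}$-balanced path system, so you get no contradiction.

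What you actually need is at least two breaks (so that the resulting $\geq 2$ paths each run between two genuinely different parts). The paper secures this by an explicit edge-swap argument: first pick the $\ell$ edges arbitrarily to get a $\mathcal{P}$-balanced subdigraph $G'$; if $G'$ is a closed cycle, replace one edge $e_{ij}$ by another edge of $G_{ij}$ (possible since $e(G_{ij})\geq 3$) to create a break; if the result is a \emph{single} path, note that its endpoints lie in a common part $V_t$, then choose an edge $i'j'$ of the cycle with $i',j'\neq t$ (possible since the cycle has length $\geq 3$) and swap $e_{i'j'}$ for a fresh edge of $G_{i'j'}$, which forces a second break at a part other than $V_t$. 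After these (at most two) swaps, every path runs between distinct parts, giving the required non-trivial $\mathcal{P}$-balanced path system. Your ``produce one $s^*$'' step corresponds to only the first swap; you need a second argument, along the lines above, to rule out the single-path case.
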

		
		\begin{proofclaim}
			Suppose to the contrary that $C$ is such a cycle in $H_0$. 
			For each $ij \in E(C)$,
			we pick an edge~$e_{ij}\in E(G_{ij})$ with $ij \in E(C)$ and call the resulting subdigraph~$G'$.
			Note that $G'$ is $\mathcal{P}$-balanced and has at most $k$ edges; moreover it is either a cycle or a path system. 
			If $G'$ is a cycle, then pick an $ij\in E(C)$, and find $e_{ij}'\in E(G_{ij})$ with $e_{ij}'\neq e_{ij}$. Write $G''=(G'-\{e_{ij}\})\cup \{e_{ij}'\}$ if $G'$ is a cycle and $G''=G'$ otherwise.
			Note that $G''$ is a $\mathcal{P}$-balanced path system. 
			If $G''$ is a single path, then there exists $t\in [k]$ such that the endpoints of~$G''$ lies in~$V_{t}$.
			Then, pick an $i'j'\in E(C)$ with $i',j'\neq t$, and find $e_{i'j'}'\in E(G_{i'j'})$ with $e_{i'j'}'\neq e_{ij}, e_{i'j'}$. 
			Write $G'''=(G''-\{e_{i'j'}\})\cup \{e_{i'j'}'\}$ if $G''$ is a single path and $G'''=G''$ otherwise.
			Note that $G'''$ consists of at least two vertex-disjoint paths and that for any path in $G'''$, the endpoints lie in distinct parts of the partition $\mathcal{P}=\{V_{i}:i\in[k]\}$, so $G'''$ is a non-trivial $\mathcal{P}$-balanced path system with at most $k$ edges, a contradiction.
		\end{proofclaim}

		\begin{claim} \label{clm:sizedifference}
			For all distinct $i,j \in [k]$, if $e(G_{ij})\le 2$, then $e(G_{ji})\le 2k^2$.
		\end{claim}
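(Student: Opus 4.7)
The plan is to argue by contradiction: assume $e(G_{ij}) \le 2$ and $e(G_{ji}) > 2k^2$, and construct a non-trivial $\mathcal{P}$-balanced path system with at most $k$ edges, contradicting the standing assumption. The central observation is that whenever we find vertex-disjoint edges $e \in E(G_{ij})$ and $f \in E(G_{ji})$, the pair $\{e,f\}$ itself forms such a path system (using only $2\le k$ edges, non-trivial because $e$ joins $V_i$ to $V_j$). Hence every edge of $G_{ji}$ must share a vertex with every edge of $G_{ij}$.

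Viewing $G_{ji}$ as an undirected bipartite graph between $V_j$ and $V_i$, I first argue that its matching number $\nu$ satisfies $\nu \le k$. Suppose instead that $\{f_1,\dots,f_{k+1}\} \subseteq E(G_{ji})$ is a matching, so its vertex set consists of $2(k+1)$ distinct vertices. I would combine this matching with a path $P$ of length at most $k-1$ from $V_i$ to $V_j$ in $G$ --- taken to be a single edge of $G_{ij}$ when $e(G_{ij}) \ge 1$, and otherwise a short path obtained by lifting a shortest directed path from $i$ to $j$ in the contracted multigraph on $[k]$. Since $V(P)$ has at most $k$ vertices, a pigeonhole argument picks some $f_t$ vertex-disjoint from $P$, and then $\{f_t,P\}$ is a forbidden non-trivial $\mathcal{P}$-balanced path system with at most $k$ edges.

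Applying König's theorem, we obtain a vertex cover $C \subseteq V_i \cup V_j$ of $G_{ji}$ with $|C| \le k$. To finish, I show each $c \in C$ accounts for at most $2k$ edges of $G_{ji}$: if, say, $c \in V_i$ is the target of more than $2k$ edges $v_1c, v_2c, \ldots \in E(G_{ji})$ (the case $c \in V_j$ is symmetric), then choosing any short $V_i \to V_j$ path $P$ in $G$ avoiding $c$, pigeonhole again produces some $v_t \notin V(P)$, and $\{v_tc, P\}$ is again a forbidden non-trivial $\mathcal{P}$-balanced path system with at most $k$ edges. Summing over $C$ yields $e(G_{ji}) \le 2k|C| \le 2k^2$, the desired contradiction. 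The main obstacle throughout is producing the required short $V_i \to V_j$ paths avoiding prescribed vertices; for this I would exploit the regularity of $G$, the bounds $|V_i| \ge d/2$, $d > 165k^5$, and $n < (qd+1)k$, arguing that either the contracted multigraph on the parts contains a short directed $i \to j$ path that lifts to $G$ while avoiding the $O(1)$ forbidden vertices, or else the parts split into reachable/unreachable portions (with no edges from the former to the latter) whose vertex counts violate the bound on $n$.
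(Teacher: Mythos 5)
The core observation at the start of your proposal is correct: two vertex-disjoint edges $e\in E(G_{ij})$ and $f\in E(G_{ji})$ form a non-trivial $\mathcal{P}$-balanced path system with $2\le k$ edges, and this constraint is the right thing to exploit. However, the scaffolding you build on top of it (matching number, K\"onig, vertex cover, plus a ``short $V_i\to V_j$ path'' in each step) both overcomplicates the problem and leaves a genuine gap.

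The gap is exactly the one you flag as the ``main obstacle.'' You want, for any prescribed $O(k)$ vertices, a directed $V_i\to V_j$ path of length $\le k-1$ in $G$ avoiding those vertices. When $e(G_{ij})=0$ you propose to lift a shortest directed $i\to j$ path from a contracted multigraph on $[k]$, but such a lift is not automatic: having $e(G_{i_s i_{s+1}})\ge 1$ (or even $\ge 3$) for consecutive parts along a path $i=i_0,i_1,\dots,i_\ell=j$ does not guarantee that one can choose $v_s\in V_{i_s}$ with $v_sv_{s+1}\in E(G)$ for all $s$, since the relevant edge sets between consecutive parts need not share endpoints in the middle parts. Your proposed fallback --- that if no such path exists, the reachable/unreachable split of the parts gives a contradiction to $n<(qd+1)k$ via a \emph{vertex count} --- does not work as stated: vertex counts are consistent with $n<(qd+1)k$ regardless of reachability. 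What actually gives a contradiction is an \emph{edge count} using $d$-regularity, and that requires a different (and as it turns out, much shorter) argument.

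For comparison, the paper's proof of this claim avoids constructing paths in $G$ entirely. With (WLOG) $e(G_{21})\le 2$ and $e(G_{12})\ge 2k^2+1$, it considers the auxiliary digraph $H_0$ on $[k]$ with $i'j'\in E(H_0)$ iff $e(G_{i'j'})\ge 3$, so $12\in E(H_0)$ and $21\notin E(H_0)$. Letting $A$ be the set of parts reachable from $1$ by a directed walk in $H_0$ starting with the edge $12$ and $B=[k]\setminus A$, Claim~\ref{clm:cyclesize} (no directed cycle of length $\ge 3$ in $H_0$) together with $21\notin E(H_0)$ forces $1\in B$ while $2\in A$. There are no $H_0$-edges from $A$ to $B$, so $e(G_{i'j'})\le 2$ for all $(i',j')\in A\times B$, giving $\sum_{(i',j')\in A\times B}e(G_{i'j'})\le 2k^2$. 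On the other hand, $d$-regularity of $G$ gives $\sum_{(i',j')\in A\times B}e(G_{i'j'})=\sum_{(i',j')\in B\times A}e(G_{i'j'})\ge e(G_{12})\ge 2k^2+1$, a contradiction. Note that this is precisely the edge-counting-via-regularity step your fallback gestures at, but it replaces your entire matching/K\"onig/path-lifting structure rather than being a subcase of it. I recommend discarding the matching-number framework and redoing the argument along these lines.
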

		
		\begin{proofclaim}
			Suppose to the contrary and without loss of generality that $e(G_{21})\le 2$ and $e(G_{12})\ge 2k^2+1$.
			Let $H_0$ be as defined in Claim~\ref{clm:cyclesize}, so $12 \in E(H_0)$ and $21\notin E(H_0)$.
			Let $A$ be the set of $i\in [k]$ such that $H_0$ contains a directed path from~$1$ to~$i$ starting with the edge $12$, where we a priori allow $1 \in A$ (i.e.~resulting from a closed path). 
			Let $B=[k]\setminus A$.
			It is clear from the definition that $2\in A$, and by Claim~\ref{clm:cyclesize} and since $21\notin E(H_0)$ we have $1\in B$. 
			Note that by the definition of~$A$, there are no edges in~$E_{H_0}(A,B)$ so that $e(G_{ij})\le 2$ for all $ij\in A\times B$. 
			Together with the regularity of~$G$, we have 
			\begin{align*}
				2k^2\ge \sum_{ij\in A\times B}e(G_{ij})=\sum_{ij\in B\times A} e(G_{ij}) \ge e(G_{12})\ge 2k^2+1,  
			\end{align*}
			which is a contradiction.
		\end{proofclaim}

		Let $H^*$ be the graph on~$[k]$ such that $ij \in E(H^*)$ if $e(G_{ij}), e(G_{ji}) \ge 3$, i.e. $H^*$ is the graph obtained from $H_0$ by deleting all the edges which do not lie in a directed $2$-cycle and by making each directed $2$-cycle into an undirected edge.
		By Claim~\ref{clm:cyclesize}, $H^*$ is acyclic. 
		Consider distinct $i,j \in [k]$.
		Note that there do not exist disjoint edges $e_1, e_2$ with $e_1 \in E(G_{ij})$ and $e_2 \in E(G_{ji})$ because otherwise $\{e_1,e_2\}$ would form a non-trivial $\mathcal{P}$-balanced path system. Thus if $ij \in E(H^*)$ (that is, $e(G_{ij}) , e(G_{ji})\ge 3$), then there exists a vertex $w_{ij}\in V_{i} \cup V_j$ that is contained in all edges of~$G_{ij} \cup G_{ji}$. So, we have 
		\begin{align*}
			e(G_{ij}) +e(G_{ji})  = d^+_{G_{ij} \cup G_{ji}} (w_{ij}) + d^-_{G_{ij} \cup G_{ji}}(w_{ij}) \overset{\mathclap{\text{\ref{item:degree-is-large-inside-diagonal} }}}{\le } ( 2-1/k ) d . 
		\end{align*}
		If $ij \not\in E(H^*)$, then $\min\{e(G_{ij}), e(G_{ji})\}\le 2$ and by Claim~\ref{clm:sizedifference}, we obtain
		\begin{align}
			e(G_{ij}) +e(G_{ji})\leq 2k^2+2\leq (  2-1/k)d \label{eqn:e(Gij)+e(Gji):2}    
		\end{align}
		as $k\ge 2$ and $d>165k^5$. 
		Hence we have 
		\begin{align}
			e(G_{ij}) +e(G_{ji}) \le (  2-1/k) d \text{ for all distinct $i ,j \in[k]$} .
			\label{eqn:e(Gij)+e(Gji)}    
		\end{align} 
		Recall that $q =2$ if $G$ is an oriented graph, and $q =1$ otherwise. Note that, if $|V_i|\le qd+1$ for some $i\in[k]$, then
		\begin{align}
			2kd  & 	
			\overset{\mathclap{\text{\eqref{eqn:e(Gij)+e(Gji)}}}}{\ge }  	\sum_{j \in [k] \setminus \{i\}}e(G_{ij}) 
			= 	
			d|V_i|  - e(G_{ii}) 
			\ge d|V_i|  - \frac{|V_i| (|V_i|-1) }{q} \nonumber \\
			&
			= \frac{q d +1 - |V_i|}{q} |V_i| 
			\overset{\mathclap{\text{ \ref{item:each-part-is-at-least-d-over-2} }}}{\ge }   \frac{d ( q d +1 - |V_i|)}{2q}
			\ge \frac{d ( q d +1 - |V_i|)}{4}
			. \nonumber
		\end{align}
		Hence, for all $i\in [k]$,
		\begin{align}
			|V_i| & \ge qd+1-8k.
			\label{eqn:sizeV_i}
		\end{align}
		
		We now orient~$H^*$ to obtain an oriented graph~$H$ on~$[k]$ as follows. 
		Recall that if $ij \in E(H^*)$, then there exists a vertex $w_{ij} \in V_i \cup V_j$ contained in all edges of~$E(G_{ij}) \cup E(G_{ji})$.
		We orient from $i$ to $j$ if $w_{ij} \in V_j$.

		Let $W$ be the set of $w_{ij}$ and $W_i = W \cap V_i$ for all $i \in [k]$. 
		Let $V_i'=V_i\setminus W_i$.
		Note that 
		\begin{align}
			|W_i| \le k-1 \text{ and so } |V_i'|= |V_i|-|W_i|\overset{\mathclap{\text{\eqref{eqn:sizeV_i}}}}{\ge }qd-9k. \label{eqn:|W_i|}
		\end{align}
		Recall that $\mathcal{B}(G, \mathcal{P})=\bigcup_{i,j\in[k]:\,i\neq j}G_{ij}$. 
		Let $G^* = \bigcup_{i,j \in [k]}G^*_{ij}$ where $G^*_{ij}$ is the subdigraph of $\mathcal{B}(G, \mathcal{P})$ consisting of the edges (in both directions) between $V_i'$ and $w_{ij}$ if $ij\in E(H)$ and $G^*_{ij} = \emptyset$ otherwise.\footnote{i.e. $G^*_{ij}=G_{ij}[V_i'\cup w_{ij}]\cup G_{ji}[V_i'\cup w_{ij}]$ if $ij\in E(H)$ and $G^*_{ij}=\emptyset$ otherwise.} Note that $G^*_{ij}$ is a subdigraph of $G_{ij}\cup G_{ji}$ whenever $ij\in E(H)$ and $G^*_{ij}=\emptyset$ otherwise. 
		Therefore,
		\begin{align}
			e( G^*_{ij} ) 
			\begin{cases}
				\le e(G_{ij} \cup G_{ji})  & \text{ if $ij \in E(H)$;} \\
				= 0 & \text{ if $ij \notin E(H)$.}
			\end{cases}
			\label{eqn:H_ij}
		\end{align}
		Hence, we deduce that
		\begin{align}
			e (\mathcal{B}(G, \mathcal{P}) )- e(G^*) & \le \sum_{ij \notin E(H^*)} \left( e(G_{ij})+ e(G_{ji}) \right)  + \sum_{i,j \in [k] : i \ne j } |W_i||W_j| \nonumber \\
			& \overset{\mathclap{\text{\eqref{eqn:e(Gij)+e(Gji):2},\eqref{eqn:|W_i|}}}}{\le} \quad\binom{k}2 (2k^2+2) + k (k-1)^3 
			< 3 k^4
			.
			\label{eqn:e(G*)}
		\end{align}
		
		For $i \in [k]$, let $\overline{V_i} = V(G) \setminus V_i$.
		Since all edges in~$G^*$ contain a vertex in~$W$ and $G^*[W] \cup \bigcup_{i \in [k]}G^*_{ii}$ is empty, we have for all $i \in [k]$, 
		\begin{align}
			\sum_{j\in[k]}e(G^*_{ij})=\sum_{j\in[k],\,ij\in E(H)}\bigg(e_{G^*}(V_i',w_{ij})+e_{G^*}(w_{ij},V_i')\bigg)=e_{G^*}(V_i',\overline{V_i})+e_{G^*}(\overline{V_i},V_i')
			\label{eqn:G^*[V_i', W_j]NEW}    
		\end{align}
		and, for all $j \in [k]$,
		\begin{align}
			\sum_{i\in[k]}e(G^*_{ij})=\sum_{i\in[k],\,ij\in E(H)}\bigg(e_{G^*}(V_i',w_{ij})+e_{G^*}(w_{ij},V_i')\bigg)= e_{G^*}(\overline{V_j},W_j)+e_{G^*}(W_j,\overline{V_j}).
			\label{eqn:G^*[V_i', W_j]2NEW}    
		\end{align}
		We also need the following inequality
		\begin{align}
			e_G( V_i',V_i) = e_G (V_i')+\sum_{w \in W_i } d^-_G(w, V'_{i} ) \le e_G (V_i')+\sum_{w \in W_i } d^-_G(w, V_{i} ). \label{eqn:k_0eq1}
		\end{align}
		
		Now, let $	k_0 =  |\{i \in [k]: |V_i| \le q d+1\}|$.
		Note that $k_0>0$ since $n<(qd+1)k$ by~\ref{item:relation-n-d-k}.
		Without loss of generality, 
		\begin{align}
			|V_i| \le q d+1 \text{ if and only if } i \in [k_0]. \label{eqn:[k_0}
		\end{align}
		For $i \in [k_0]$, 
		\begin{align}
			d |V'_i| -  e_{G}( V_i' )  & \ge d |V'_i| -  \frac{|V'_i|(|V'_i|-1)}{q}  
			= \frac{|V_i'|}{q} \left( q d +1 - |V'_i| \right) 
			= \frac{|V_i'|}{q} \left( q d +1 - |V_i|  \right)  + \frac{|V_i'||W_i|}{q} \nonumber \\
			&	 \overset{\mathclap{\text{\eqref{eqn:|W_i|}}}}{\ge } \frac{qd-9k}{q} \left( q d +1 - |V_i|  \right) + \frac{|V_i'||W_i|}{q}  \nonumber \\
			& = d \left( q d +1 - |V_i|  \right)  - \frac{9k \left( q d +1 - |V_i|  \right)}{q} + \frac{|V_i'||W_i|}{q} \nonumber \\
			& \overset{\mathclap{\text{\eqref{eqn:sizeV_i}}}}{\ge } d \left( q d +1 - |V_i|  \right)  - 72k^2 + \frac{|V_i'||W_i|}{q}  . \label{eqn:k_0eq2}
		\end{align}
		Then,
		\begin{align}
			e_{G} ( V_i', \overline{V_i} ) 
			&= \sum_{v \in V_i' } d^+_G(v, \overline{V_i} ) 
			= \sum_{v \in V_i' } \left( d - d^+_G(v,  V_{i} )  \right)
			= d |V'_i| -  e_{G}(V_i',V_i) \nonumber \\
			& \overset{\mathclap{\text{\eqref{eqn:k_0eq1}}}}{\ge } d |V'_i| -  e_{G}( V_i' ) - \sum_{w \in W_i } d^-_G(w, V_{i} )	
			= d |V'_i| -  e_{G}( V_i' ) - \sum_{w \in W_i } \left( d - d^-_G(w, \overline{V_{i}} ) \right)
			\nonumber \\
			& = d |V'_i| -  e_{G}( V_i' )  - d |W_i| + e_G ( \overline{V_{i}},W_i  ) \nonumber \\
			& \overset{\mathclap{\text{\eqref{eqn:k_0eq2}}}}{\ge }
			d \left( q d +1 - |V_i|  \right)  - 72k^2 - \frac{|W_i|}{q} (q d - |V_i'|) + e_G ( \overline{V_{i}},W_i  ) \nonumber \\
			& \overset{\mathclap{\text{\eqref{eqn:|W_i|}}}}{\ge } d	\left( q d +1 - |V_i|  \right)  -  81k^2  + e_G ( \overline{V_{i}},W_i  )
			\nonumber
		\end{align}
		and a similar inequality holds for $e_G ( \overline{V_{i}},V_i'  )$. 
		Hence, for $i \in [k_0]$,
		\begin{align}
			e_{G} ( V_i', \overline{V_i} ) + e_G ( \overline{V_{i}},V_i'  ) \ge  2d	\left( q d +1 - |V_i|  \right)  -  162k^2  + e_G ( \overline{V_{i}},W_i  ) + e_G ( W_i,\overline{V_{i}}) .	\label{eqn:e(G[V'_i,barV_i])}
		\end{align}

		Let $\mathcal{I}$ be the set of all connected components~$I$ in~$H^* \left[ [k_0] \right]$ with~$\sum_{ i \in I}  (qd  +1  -  |V_i| ) >0$.\footnote{Here we identify the connected component with its vertex set.}
		Such $\mathcal{I}$ is non empty as $n < (qd +1) k $ by~\ref{item:relation-n-d-k}. 
		We have that
		\begin{align}
			\sum_{i \in [k_0]}  (qd  +1  -  |V_i| ) = \sum_{I\in\mathcal{I}} \sum_{ i \in I}  (qd  +1  -  |V_i| ) .\label{eqn:sum_k_0}
		\end{align}
		Recall that $H^*$ is acyclic and $H$ is an orientation of~$H^*$.
		Note that for each $I\in\mathcal{I}$, $H[I]$ is an oriented tree and $E_H ( I, [k]\setminus [k_0] ) = E_H ( I,[k] ) \setminus E_H (I)$.
		Therefore, for each $I\in\mathcal{I}$, 
		\begin{align*}
			&      (  2-1/k ) d \cdot e_H ( I, [k]\setminus [k_0] )
			= (2-1/k) d \cdot |E_H ( I,[k] ) \setminus E_H (I)|
			\\	& 
			\overset{\mathclap{\eqref{eqn:e(Gij)+e(Gji)}}}{\ge} \sum_{ ij\in E(H) \colon i\in I,\, j\in[k]\setminus I}  e(G_{ij} \cup G_{ji}) \overset{\mathclap{\eqref{eqn:H_ij}}}{\ge}
			\sum_{ij\in E(H) \colon i\in I,\, j\in[k]\setminus I} e(G^*_{ij})
			\\ &= \sum_{ij\in E(H)\colon i\in I,\, j\in[k]} e(G^*_{ij}) - \sum_{ij\in E(H) \colon i,j\in I,\, i\neq j} e(G^*_{ij})
			\\ &\overset{\mathclap{\eqref{eqn:H_ij}}}{\ge} \sum_{ij\in I\times[k]} e(G^*_{ij}) - \sum_{ ij\in [k]\times I} e(G^*_{ij}) \\ &
			\overset{\mathclap{\eqref{eqn:G^*[V_i', W_j]NEW}, \eqref{eqn:G^*[V_i', W_j]2NEW}}}{=}\quad\,\,
			\sum_{i\in I} \bigg(e_{G^*}(V_i', \overline{V_i})+ e_{G^*}( \overline{V_i},V_i') \bigg)- \sum_{ j\in I } \bigg(e_{G^*}(\overline{V_j}, W_j)+e_{G^*}(W_j,\overline{V_j})\bigg) \\ &
			\overset{\mathclap{\eqref{eqn:e(G*)}}}{\ge}
			\,\,\sum_{i\in I} \bigg(e_{G}(V_i', \overline{V_i})+ e_{G}( \overline{V_i},V_i') \bigg)- \sum_{ j\in I } \bigg(e_{G}(\overline{V_j}, W_j)+e_{G}(W_j,\overline{V_j})\bigg) - 3k^4 \\ &
			= \sum_{i\in I} \Bigg(\bigg(e_{G}(V_i', \overline{V_i})+ e_{G}( \overline{V_i},V_i') \bigg)- \bigg(e_{G}(\overline{V_i}, W_i)+e_{G}(W_i,\overline{V_i})\bigg) \Bigg)- 3k^4 \\ &
			\overset{\mathclap{\eqref{eqn:e(G[V'_i,barV_i])}}}{\ge} \,\,\, 2d \sum_{ i \in I}  (qd  +1  -  |V_i| )  - 162|I| k^2 - 3k^4 
			\ge 2d \sum_{ i \in I}  (qd  +1  -  |V_i| )
			- 165 k^4.
		\end{align*}
		After rearranging, for each $I\in \mathcal{I}$, we have 
		\begin{align*}
			e_H(I,[k]\setminus [k_0] )  &\ge  \frac{2}{2-1/k} \sum_{ i \in I}  (qd  +1  -  |V_i| ) 
			- \frac{165 k^4}{(2-1/k)d}  \\
			&= \sum_{ i \in I}(qd  +1  -  |V_i| )+\frac{1}{2k-1}\sum_{ i \in I}(qd  +1  -  |V_i| ) - \frac{165 k^5}{(2k-1)d}\\
			&\ge \sum_{ i \in I}  (qd  +1  -  |V_i| )+\frac{1}{2k-1}-\frac{165 k^5}{(2k-1)d}>\sum_{ i \in I}  (qd  +1  -  |V_i| )
		\end{align*}
		as $\sum_{ i \in I}  (qd  +1  -  |V_i| )\ge 1$ and $d>165k^5$.
		Hence, we have  
		\begin{align}
			\sum_{I\in\mathcal{I}} e_H(I,[k]\setminus [k_0] )  &\ge 
			\sum_{I\in\mathcal{I}} \left( 1 + \sum_{ i \in I}  (qd  +1  -  |V_i| ) \right) 
			= |\mathcal{I}|+ \sum_{I\in\mathcal{I}} \sum_{ i \in I}  (qd  +1  -  |V_i| ) \nonumber \\
			& \overset{\mathclap{\text{\eqref{eqn:sum_k_0}}}}{=}
			|\mathcal{I}|+ \sum_{i \in [k_0]}  (qd  +1  -  |V_i| )
			.
			\label{eqn:k-k_0}
		\end{align}
		Recall that $H$ is an oriented forest and so $H[I]$ is an oriented tree for all $I \in \mathcal{I}$. 
		Then, 
		\begin{align}
			\sum_{I\in \mathcal{I}} e_H (I,[k]\setminus [k_0] ) 
			& \le e_{H} \left( \left([k]\setminus[k_0]\right)\cup \bigcup_{I\in \mathcal{I}} I \right) - \sum_{I\in \mathcal{I}}e_H(I)
			\nonumber
			\\ 
			&\le \left( \left(k-k_0 +\sum_{I\in \mathcal{I}}|I|\right) - 1 \right)-\sum_{I\in \mathcal{I}}\left(|I|-1\right) 
			= k-k_0+|\mathcal{I}|-1. \nonumber
		\end{align}
		Together with \eqref{eqn:k-k_0}, we have 
		\begin{align}
			\sum_{ i \in [k_0]}  (qd  +1  -  |V_i| ) \le k-k_0-1. \label{eqn:sum-of-components-vertices-k0}
		\end{align}
		Finally 
		\begin{align*}
			n & = \sum_{i \in [k]} |V_i| 
			\overset{\mathclap{\eqref{eqn:[k_0}}}{\ge} \sum_{i \in [k_0]} |V_i| + (qd+2) (k-k_0)\\
			&  = (qd+1)k +( k-k_0 ) - \sum_{i \in [k_0]} (qd  +1  -  |V_i| )
			\overset{\mathclap{\eqref{eqn:sum-of-components-vertices-k0}}}{ \ge}  (q d+1)k +1,
		\end{align*}	
		a contradiction to property \ref{item:relation-n-d-k} of Lemma~\ref{lma:regdigraph}.
		This completes the proof of the lemma. 
	\end{proof}

	%%%%%%%%%%%%%%%%%%%%%%%%%%%%%%%%%%%%%%%%%%%%%%%%%%%%%%%%%%%%%%%%%%%%%%%%%%%%%%%%%%%%%%%%%%%%%%%%%%%%%%%%
	
	\section{Conclusion} \label{sec:conlusion}
	
	\subsection{Path cover for (non-regular) graphs}
	
	Magnant, Wang and Yuan~\cite{PathCover2} gave a stronger version of Conjecture~\ref{conj:PATH-COVER}. Recall that $\pi(G)$ is the minimum number of vertex-disjoint paths needed to cover $G$.
	
	\begin{conjecture}[Magnant, Wang and Yuan~\cite{PathCover2}]
		\label{conj:PATH-COVER2}
		If $G$ is a graph on $n$ vertices with $\Delta(G) = \Delta$ and $\delta(G) = \delta$, then $\pi(G)\leq \max \{ n/(\delta+1) , (\Delta - \delta) n / (\Delta + \delta) \}$.
	\end{conjecture}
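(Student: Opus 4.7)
The plan is to adapt the strategy of Theorem~\ref{thm:GruslysLetzter} (and of Theorem~\ref{thm:main}) to the non-regular setting, at least in the dense regime $\delta\ge\alpha n$. When $\Delta=\delta$ the second term vanishes and the statement is precisely Conjecture~\ref{conj:PATH-COVER} in the regular case, so Theorem~\ref{thm:GruslysLetzter} already handles that situation. Thus the new content is how the term $(\Delta-\delta)n/(\Delta+\delta)$ enters whenever there is a significant gap between the maximum and minimum degree. The tight example to keep in mind is the complete bipartite graph $K_{\delta,\Delta}$, which forces $\pi(G)\ge \Delta-\delta=(\Delta-\delta)n/(\Delta+\delta)$ by a parity argument on alternation between the two sides.

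First I would prove an undirected, non-regular analogue of Theorem~\ref{thm:digraphstructure}, applying the bipartite special case Theorem~\ref{thm:structure} with $\delta$ playing the role of the regularity degree. This should yield a partition of $V(G)$ into $k\le n/\delta+O(1)$ parts, each part inducing a robust expander of linear minimum degree so that Theorem~\ref{thm:RobustExpanderImpliesHamilton} (in its undirected form) produces a Hamilton cycle and hence a Hamilton path inside each part. Treating each part as a single path already gives a cover by at most $k\le n/(\delta+1)$ paths, matching the first term.

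Next I would run a balancing argument analogous to Lemma~\ref{lma:balancingpathsystemcombined} in order to glue parts together whenever possible. The key modification is that, because $G$ is no longer regular, the ``excess degree'' $d(v)-\delta$ at each vertex~$v$ must be explicitly tracked; I would route it through an adjusted version of the flow network~$\mathcal{F}^*$ from the proof of Lemma~\ref{lem:balancedpath}, giving capacity at~$v$ proportional to $d(v)-\delta$. When the resulting contraction still yields a $\mathcal{P}$-balanced path system with few edges, one recovers the first term; when it fails, the obstruction should be a large subgraph close to the extremal $K_{\delta,\Delta}$, and the parity argument above then accounts for the second term by providing the matching lower bound and an essentially trivial explicit cover by paths.

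The hard part will be the extremal analysis replacing Lemma~\ref{lma:regdigraph}. The regular proof relied crucially on the identity $\sum_v d(v)=d|V|$ to control cross-edges between parts; without regularity, each part~$V_i$ contributes not just a size deficiency $\delta+1-|V_i|$ but also a degree surplus $\sum_{v\in V_i}(d(v)-\delta)$, and a careful weighted balance between these two quantities must be shown to produce either the first or the second term of the maximum. I expect this to be the principal obstacle, together with extending the argument beyond the dense regime: the sparse case $\delta=o(n)$ appears to require genuinely new ideas, since both robust expansion and the flow-based balancing machinery rely on linear-degree assumptions.
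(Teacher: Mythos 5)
This statement is Conjecture~\ref{conj:PATH-COVER2}, which the paper cites as an \emph{open problem} of Magnant, Wang and Yuan in the concluding discussion (Section~\ref{sec:conlusion}); the paper offers no proof of it. So there is no proof in the paper to compare against, and the question is simply whether your sketch constitutes a credible argument. It does not, for reasons some of which you already acknowledge.

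The most immediate gap is at the very first step. You propose to apply Theorem~\ref{thm:structure} (and, implicitly, the general structure theorem of K{\"u}hn, Lo, Osthus and Staden~\cite{IntoTwoBipartiteExpander}) ``with $\delta$ playing the role of the regularity degree.'' But that theorem is stated and proved only for $d$-\emph{regular} graphs; the degree~$d$ cannot simply be replaced by a minimum-degree lower bound. As Remark~\ref{rem:remark1} explains, the proof iteratively refines a partition and uses regularity in an essential way to argue that the refinement terminates with parts of size comparable to~$d$ (roughly, that few edges leave each part forces its size to be close to~$d$; without regularity the degree sum inside a part is no longer tied to $d\cdot|U|$ and the argument breaks). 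So you do not actually have a robust-expander decomposition of a non-regular graph from which to start, and the rest of the machinery (Lemmas~\ref{lma:longcycle2}, \ref{lma:balancingpathsystemcombined}, the flow network, the extremal analysis of Lemma~\ref{lma:regdigraph}) is built on top of that decomposition and the degree identity in Proposition~\ref{prop:partition-regular-graphs}, both of which are specific to regular (di)graphs.

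Beyond that, you correctly identify two further obstacles but do not resolve either: the replacement of the extremal Lemma~\ref{lma:regdigraph} requires a new ``weighted balance'' between size deficiency and degree surplus that you only speculate about, and the sparse case $\delta=o(n)$ is entirely outside the reach of the robust-expansion and flow framework (which genuinely needs $\delta\geq\alpha n$). Your parity lower bound for $K_{\delta,\Delta}$ is fine, but a matching lower bound for the extremal example is not a substitute for the upper bound argument. In short, this is a plausible research direction in the dense regime if one can first establish a robust-expander structure theorem for merely \emph{almost}-regular dense graphs, but as written the proposal starts from an unproven (and nontrivial) extension of Theorem~\ref{thm:structure}, so it is not a proof of Conjecture~\ref{conj:PATH-COVER2} nor even of its dense special case.
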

	
	The bound is tight by considering a disjoint union of $K_{\delta+1}$ or a disjoint union of $K_{\delta, \Delta}$.
	The conjecture holds if $\delta \le 2$~\cite{PathCover2} and when $\Delta \ge 2 \delta$~\cite{KouiderZamime}.
	Naturally, one can ask for the directed or oriented analogues.

	\subsection{Edge-disjoint cycles}
	In a weaker version of the problem that we have considered, one is interested in finding \textit{edge-disjoint} cycles whose union covers all the vertices. As a generalization of Dirac's theorem, it was conjectured by Enomoto, Kaneko and Tuza~\cite{ENO-KA-TU} that if a graph $G$ on $n$ vertices has minimum degree at least $n/k$, then $V(G)$ can be covered by $k-1$ edge-disjoint cycles. The case $k=3$ was also proved in~\cite{ENO-KA-TU}. The conjecture was proved for $2$-connected graphs~\cite{Edge-Disjoint-2-connected}, and has been completely resolved in~\cite{Edge-Disjoint-Full}. Later, Balogh, Mousset and Skokan~\cite{Edge-Disjoint-Stability} obtained a stability result, showing that every graph on $n$ vertices with minimum degree nearly $n/k$ has a special structure if it does not have $k-1$ edge-disjoint cycles covering all vertices. One can ask the same question for digraphs as a generalization of Ghoulia-Houri's theorem~\cite{GhouilaHouri}, and Theorem~\ref{thm:main} answers it affirmatively for regular digraphs:
	\begin{conjecture}\label{conj:DIRECTED-EDGE-DISJOINT}
		Let $k\in\mathbb{N}$ with $k\geq 2$. If $G$ is a digraph on $n$ vertices with minimum semi-degree at least~$n/k$, then $V(G)$ can be covered by $k-1$ edge-disjoint cycles.  
	\end{conjecture}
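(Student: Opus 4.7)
The starting observation is that Theorem~\ref{thm:main} already implies the conjecture for sufficiently large regular digraphs: if $G$ is $d$-regular with $d \geq n/k$, then $\lfloor n/(d+1) \rfloor \leq \lfloor nk/(n+k) \rfloor \leq k-1$, so at most $k-1$ vertex-disjoint (hence edge-disjoint) cycles cover $V(G)$. Thus the real content of the conjecture lies in the non-regular case.

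To attack the non-regular case, I would extend the techniques of this paper to the minimum-semi-degree setting. The first step is to establish an analogue of Theorem~\ref{thm:digraphstructure} for digraphs with $\delta^0(G) \geq \alpha n$, producing a partition $\mathcal{P}=\{V_{ij}: i,j \in [k']\}$ with $k' \leq k$ such that each $G[V_{i*}, V_{*i}]$ is a bipartite robust expander; this should follow from the bipartite structure result of K\"uhn--Lo--Osthus--Staden~\cite{IntoTwoBipartiteExpander} after a natural adaptation, since that tool already depends on degree conditions rather than on regularity per se. The next step is to transfer the machinery of Sections~\ref{sec:longcycles}--\ref{sec:pf_regdigraph} to the non-regular setting. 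This plan resolves the conjecture for large $n$; small $n$ would require separate combinatorial arguments (the case $k=2$ being Ghouila--Houri).

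\textbf{Main obstacle.} The hardest part is adapting Lemma~\ref{lma:regdigraph} to the non-regular setting. Its proof uses regularity crucially via the identity $e(G_{i*}) - e(G_{*i}) = d(|V_{i*}| - |V_{*i}|)$ of Proposition~\ref{prop:partition-regular-graphs}, the derivation $e(G_{ij}) + e(G_{ji}) \leq (2-1/k)d$ from condition~\ref{item:degree-is-large-inside-diagonal} of Lemma~\ref{lma:regdigraph}, and the final counting argument relying on $n < (qd+1)k$. For non-regular $G$ these exact identities become inequalities with error terms controlled by the degree spread $\Delta^0(G) - \delta^0(G)$, and the final counting must replace $qd+1$ by $q\delta^0(G)+1 \geq qn/k+1$. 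Balancing these error terms---particularly in the extremal regime where $G$ is close to a disjoint union of $k$ near-cliques of possibly differing sizes joined by a few bridges---is the crux. The edge-disjointness (rather than vertex-disjointness) of the conjecture's conclusion may provide welcome slack here through cycle-merging arguments that exploit those bridges.
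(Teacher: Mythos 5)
This statement is a conjecture, not a theorem: the paper offers no proof, stating it in Section~\ref{sec:conlusion} with only the remark that Theorem~\ref{thm:main} settles the regular case. Your first paragraph reproduces exactly that remark, and the arithmetic ($d \ge n/k$ gives $n/(d+1) \le nk/(n+k) < k$, hence at most $k-1$ cycles for $n$ sufficiently large relative to $1/\alpha = k$) is correct. Everything after that is a research programme rather than a proof: you propose to extend Theorem~\ref{thm:digraphstructure} and Lemma~\ref{lma:regdigraph} to the minimum-semi-degree setting, but you do not carry out either step, and you yourself label the key adaptation ``the main obstacle'' and ``the crux.'' A plan that ends by identifying the hard open step is a diagnosis of the difficulty, not a resolution of it.

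There is also a substantive obstruction to the first step of the plan as stated. The robust-component structure result of K\"uhn, Lo, Osthus and Staden~\cite{IntoTwoBipartiteExpander} is genuinely a theorem about \emph{regular} graphs; it is not merely a degree condition dressed up. As the paper's Remark~\ref{rem:remark1} (and its footnote) explains, the iterative refinement argument uses regularity in an essential way to guarantee that when a part is split along a non-robustly-expanding cut, the two resulting parts still have few boundary edges, so the process terminates with every part of size comparable to~$d$. Under a pure minimum-semi-degree hypothesis this can fail: a small set of vertices of very large in- and out-degree can link every would-be robust component to every other, and no partition into $O(k)$ bipartite robust expanders of the required form need exist. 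So ``a natural adaptation'' of~\cite{IntoTwoBipartiteExpander} to $\delta^0(G) \ge \alpha n$ is not available off the shelf, and closing Conjecture~\ref{conj:DIRECTED-EDGE-DISJOINT} requires a genuinely new idea --- which is presumably why the authors leave it open.
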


	\subsection{Connectivity and regularity}
	Jackson's conjecture states that imposing regularity on an oriented graph reduces the degree threshold for Hamiltonicity.
	One might hope that imposing connectivity on regular oriented graphs can reduce the degree threshold for Hamiltonicity.
	We refer the reader to \cite[Sections 1 and~7]{LPY} for history and conjectures on Hamiltonicity in regular (directed or oriented) graphs with given connectivity. 

	Similarly, in the setting of cycle partitions, one might hope that connectivity in addition to regularity might reduce the upper bound in Conjecture~\ref{conj:PATH-COVER}. In the sparse setting, Reed~\cite{Reed-Connected} proved that every $3$-regular connected $n$-vertex graph can be covered by at most $\lceil n/9 \rceil$ vertex-disjoint paths, and conjectured that it suffices to use $\lceil n/10 \rceil$ paths if connectivity is replaced by $2$-connectivity. Recall that Conjecture~\ref{conj:PATH-COVER} gives the upper bound of $n/4$ for $3$-regular graphs (that are not necessarily connected). Yu~\cite{Yu-Verifies-Reed} recently verified Reed's conjecture and gave an example of a (2-connected) $d$-regular graph on $n$ vertices which requires at least $\approx n/(d+4)$ paths for $d\ge 13$. 
	It would be interesting to investigate the general relationship between the degree and connectivity of a regular (di)graph or oriented graph that guarantees a small number of vertex-disjoint cycles that cover all the vertices.

\end{document}